\newcommand{\ZZ}{\mathbb{Z}}
\newcommand{\ov}[1]{\overline{#1}}
\newtheorem{theorem}{Theorem}[section] 
\newtheorem{proposition}[theorem]{Proposition} 
\newtheorem{conjecture}[theorem]{Conjecture} 
\newtheorem{lemma}[theorem]{Lemma}
\newtheorem{remark}[theorem]{Remark}
\newcommand{\oeis}[1]{\href{https://oeis.org/#1}{{#1}}}
\newcommand{\defi}[1]{\textit{\color{red}{#1}}}
\newcommand{\level}{\mathscr{L}}
\newcommand{\Dyck}{\mathscr{D}}
\newcommand{\mono}{\overline{\mathbb{M}}_1}
\newcommand{\monod}{\mathbb{M}_1}
\newcommand{\monoi}{\mathbb{M}_2}
\newcommand{\sm}{\leq}
\newcommand{\wmin}{w_{\min}}
\newcommand{\pscolle}{\ast}
\newcommand{\colle}{\,\sharp\,}
\newcommand{\up}{\operatorname{Up}}
\newcommand{\deri}{\mathsf{D}}
\newcommand{\des}{\operatorname{Desc}}
\newcommand{\rise}{\operatorname{Rise}}
\title{Some properties of a new partial order\\ on Dyck paths}
\author{F. Chapoton}
\date{\today}
\begin{document}

\maketitle

\section*{Introduction}

This article defines a new partial order on extremely classical
combinatorial objects, namely Dyck paths, and describes some of its
basic or subtle properties.

Let us start by explaining in some detail how this partial order was
discovered. The starting point was the study of the intervals in the
Tamari lattice. This was initiated in \cite{chap_slc}, where these
intervals were enumerated. One interesting point made there was the fact
that the number of Tamari intervals of size $n$ is exactly the number
of triangulations of size $n$, found by Tutte in one of his
foundational articles on planar maps \cite{tutte}. It was only
understood much later that the set of Tamari intervals admits a
natural and interesting partial order. Indeed, the set of intervals in
any poset can be ordered by the relation $[a,b] \leq [c,d]$ if and
only if $a \leq c$ and $b \leq d$. When drawing carefully the Hasse
diagram for these posets of Tamari intervals, one obtains a picture
which apparently has been considered previously from another point of
view. This other context is the study of homotopy associativity and
more precisely the cellular diagonal for the associahedra that
is required to define the tensor product of $A_{\infty}$ algebras
\cite{loday}.

By admitting that this is really the same picture, one can consider
that Tamari intervals can be gathered into higher-dimensional cells,
and that every such cell has unique top and bottom elements. There is
a distinguished top cell, made of all intervals whose maximum is the
unique maximum of the Tamari lattice. Elements in this top cell are
indexed by planar binary trees. It appears that every Tamari interval
in this top cell is the top element of some unique cell. Taking the
bottom elements of these cells gives another subset indexed by planar
binary trees. The partial order that we study in the present article
is nothing else than the order induced on this subset of the poset of
all Tamari intervals. Note that this description is just the original
motivation, but instead we will start here from a purely combinatorial
definition and will not prove that it coincides with this former description.

\smallskip

Let us now present the contents of the article. The first section
contains the definition of the posets $\Dyck_n$ for $n \geq 0$,
starting from a directed graph that turns out to be their Hasse
diagram. Some basic properties are given, including their number of
maximal elements and their relationship to the Tamari lattices.

Then come several sections that prepare the ground for the first main
result, by a careful combinatorial description of the intervals in
$\Dyck_n$.

Sections 2 and 3 present a structure of graded monoid on the disjoint
union of all elements of $\Dyck_n$ for $n \geq 1$, and some
compatibility of the monoid product with the partial orders. This is
then built upon in the section 4 to define a graded monoid structure
on the disjoint union of all sets of intervals in $\Dyck_n$ for
$n \geq 1$. All these monoids are shown to be free. Section 4 also
contains a result to the effect that some principal upper ideals are
products of smaller principal upper ideals.

Section 5 is the last piece of the combinatorial puzzle, namely the
study of a specific subset of core intervals in $\Dyck_{n+2}$ and the
construction of a bijection with intervals in $\Dyck_{n}$ together
with a catalytic data.

All this combinatorial preparation is then used in section 6 to prove
the first main result: the number of intervals in $\Dyck_n$ is also
given by another formula of Tutte, namely his formula for the number
of number of rooted bicubic planar maps. This is achieved by deducing
a functional equation from the combinatorial description, and then
solving this catalytic equation. This enumerative result is yet
another instance of the growing connection between Tamari and related
posets and various kinds of maps, see for example \cite{fangpr,fang2,fang3}.

Section 7 proves the second main result: all the posets $\Dyck_n$ are meet-semilattices,
which means that any two elements have a greatest lower bound. This
requires a somewhat technical study of properties in $\Dyck_n$ of
Dyck paths that share a common prefix of some length. The proof also gives
an algorithm for computing the meet.

Section 8 briefly evoques another interesting aspect of the posets
$\Dyck_n$, namely the probable existence of many derived equivalences
between some of their intervals. This certainly deserves further
investigation.

Section 9 tells a surprising story, namely an unexpected
connection with another familly of objects called the Hochschild
polytopes, maybe not so well-known, that appeared in the works of
Saneblidze in algebraic topology \cite{san_note, san_arxiv}. One
proves that there is a bijection between elements in a specific
interval $F_n$ of $\Dyck_{n+2}$ and the vertices of the cell complexes
of Saneblidze that realize the Hochschild polytopes. This bijection is
probably also an isomorphism of posets, which would prove that the
order of Saneblidze is a lattice. This question is left open here, for
lack of a strong enough motivation.

The final section 10 contains one result and various remarks. The
result is an unexpected symmetry for some kind of $h$-polynomial that
enumerates vertices according to a coloring of the covering
relations in $\Dyck_n$.

Section 11 is an appendix recalling some classical properties of Tamari lattices.

\section{Construction and first properties}

A \defi{Dyck path} of size $n \geq 0$ is a lattice path from $(0,0)$ to
$(2 n,0)$ using only north-east and south-east steps and staying weakly above
the horizontal line.

We will also consider Dyck paths of size $n$ as words of length $2 n$
in the alphabet $\{0,1\}$, where $1$ stands for a north-east step and
$0$ for a south-east step.

The next figure is a typical example.
\begin{figure}[h]
\centering
\begin{tikzpicture}[scale=0.2]
\draw[gray,very thin] (0, 0) grid (48, 8);  
\draw[rounded corners=1, color=black, line width=2] (0, 0) -- (1, 1) -- (2, 2) -- (3, 3) -- (4, 4) -- (5, 3) -- (6, 4) -- (7, 5) -- (8, 4) -- (9, 3) -- (10, 4) -- (11, 5) -- (12, 4) -- (13, 3) -- (14, 2) -- (15, 1) -- (16, 2) -- (17, 1) -- (18, 0) -- (19, 1) -- (20, 2) -- (21, 1) -- (22, 2) -- (23, 3) -- (24, 2) -- (25, 3) -- (26, 4) -- (27, 3) -- (28, 4) -- (29, 5) -- (30, 6) -- (31, 5) -- (32, 6) -- (33, 5) -- (34, 6) -- (35, 7) -- (36, 8) -- (37, 7) -- (38, 6) -- (39, 5) -- (40, 4) -- (41, 5) -- (42, 4) -- (43, 3) -- (44, 2) -- (45, 1) -- (46, 0) -- (47, 1) -- (48, 0);
\end{tikzpicture}
\end{figure}

The \defi{area} under a Dyck path is the surface of the domain between
the horizontal line and the Dyck path.

Every Dyck path can be uniquely written as the concatenation of
several blocks, where in every block the only vertices on the
horizontal line are the first and last ones. If there is only one
block, the Dyck path is said to be \defi{block-indecomposable}. The
example displayed above has $3$ blocks.

Inside a Dyck path, a subsequence of consecutive steps is called a
\defi{subpath} if it starts and ends at the same height and keeps
strictly above that height in between. Here the height is the
vertical coordinate, increased by north-east steps and decreased by
south-east steps.

Let us say that a subpath $x$ of a Dyck path $w$ is \defi{movable} if
it is preceded in $w$ by the letter $0$ and
\begin{itemize}
\item either $x$ ends at the last letter of $w$,
\item or $x$ is followed in $w$ by the letter $1$.
\end{itemize}

\begin{figure}[h]
\centering
\begin{minipage}[b]{0.3\textwidth}
\begin{tikzpicture}[scale=0.35]
  \draw[gray,very thin] (0, 0) grid (10, 2);
  \draw[rounded corners=1, color=black, line width=2] (0, 0) -- (1, 1) -- (2, 0) -- (3, 1) -- (4, 2) -- (5, 1) -- (6, 2) -- (7, 1) -- (8, 0) -- (9, 1) -- (10, 0);
  \draw[rounded corners=1, pattern=north west lines, pattern color=teal] (5, 1) -- (6, 2)-- (7, 1) ;
\end{tikzpicture}\caption{A subpath which is not movable.}\end{minipage}\hfill
\begin{minipage}[b]{0.3\textwidth}
\begin{tikzpicture}[scale=0.35]
  \draw[gray,very thin] (0, 0) grid (10, 2);
  \draw[rounded corners=1, color=black, line width=2] (0, 0) -- (1, 1) -- (2, 0) -- (3, 1) -- (4, 2) -- (5, 1) -- (6, 2) -- (7, 1) -- (8, 0) -- (9, 1) -- (10, 0);
  \draw[rounded corners=1, pattern=north west lines, pattern color=teal] (2, 0) -- (3, 1) -- (4, 2) -- (5, 1) -- (6, 2) -- (7, 1) -- (8, 0);
\end{tikzpicture}\caption{A movable subpath.}\end{minipage}\hfill
\begin{minipage}[b]{0.3\textwidth}
\begin{tikzpicture}[scale=0.35]
  \draw[gray,very thin] (0, 0) grid (10, 2);
  \draw[rounded corners=1, color=black, line width=2] (0, 0) -- (1, 1) -- (2, 0) -- (3, 1) -- (4, 2) -- (5, 1) -- (6, 2) -- (7, 1) -- (8, 0) -- (9, 1) -- (10, 0);
  \draw[rounded corners=1, pattern=north west lines, pattern color=teal] (8, 0) -- (9, 1) -- (10, 0);
\end{tikzpicture}\caption{A movable subpath.}\end{minipage}
\end{figure}

For every Dyck path $w$ and every movable subpath $x$ in $w$, let
$N(w,x)$ be the number of consecutive $0$ letters that appear just
before $x$. For any integer $1 \leq i \leq N(w,x)$ (corresponding to a
choice among the consecutive $0$ letters just before $x$), let us
define another Dyck path $M(w,x,i)$ as the following word:
\begin{itemize}
\item first the initial part of $w$ until the letter before the chosen $0$,
\item then $x$,
\item then the $0$ letters starting at the chosen $0$,
\item then the final part of $w$ after $x$.
\end{itemize}
Graphically, this transformation corresponds to sliding the
subpath $x$ in the north-west direction by one or several steps.

\smallskip

\begin{figure}
\centering
\begin{tikzpicture}[scale=0.4]  
\draw[gray,very thin] (0, 0) grid (12, 3);  
\draw[rounded corners=1, color=black, line width=2] (0, 0) -- (1, 1) -- (2, 2) -- (3, 3) -- (4, 2) -- (5, 3) -- (6, 2) -- (7, 1) -- (8, 0) -- (9, 1) -- (10, 2) -- (11, 1) -- (12, 0);
\draw[rounded corners=1, pattern=north west lines, pattern color=teal] 
(8, 0) -- (9, 1) -- (10, 2) -- (11, 1) -- (12, 0);
\end{tikzpicture}\hfill$\longrightarrow$\hfill
\begin{tikzpicture}[scale=0.4]  
\draw[gray,very thin] (0, 0) grid (12, 4);  
\draw[rounded corners=1, color=black, line width=2] (0, 0) -- (1, 1) -- (2, 2) -- (3, 3) -- (4, 2) -- (5, 3) -- (6, 2) -- (7, 3) -- (8, 4) -- (9, 3) -- (10, 2) -- (11, 1) -- (12, 0);
\draw[rounded corners=1, pattern=north west lines, pattern color=teal] 
(6, 2) -- (7, 3) -- (8, 4)-- (9, 3) -- (10, 2);
\end{tikzpicture}
\caption{A covering relation $w \to w'$}
\end{figure}

Let $\Dyck_n$ be the set of Dyck paths of size $n$. Let us
introduce a directed graph $\Gamma_n$ with vertex set $\Dyck_n$. It
has edges from every Dyck path $w$ to all Dyck paths $M(w,x,i)$ where
$x$ is a movable subpath of $w$ and $i$ an integer between $1$ and
$N(w,x)$.

Let $\wmin$ be the unique Dyck path made of
alternating $1$ and $0$.
\begin{proposition}
  The directed graph $\Gamma_n$ is connected and acyclic with $\wmin$ as
  unique source element.
\end{proposition}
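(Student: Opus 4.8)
The plan is to exhibit a strictly increasing integer statistic that forces acyclicity, to identify the sources of $\Gamma_n$ by describing exactly which paths admit an incoming edge, and then to combine these to get connectivity. Throughout I write a path of size $n$ as a word $w = w_1 \cdots w_{2n}$ and let $y_0 = 0, y_1, \dots, y_{2n} = 0$ be the successive heights, so that $y_j - y_{j-1} = +1$ if $w_j = 1$ and $-1$ if $w_j = 0$; I set $\operatorname{area}(w) = \sum_{j=0}^{2n} y_j$.

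First I would show that every edge strictly increases $\operatorname{area}$. Consider an edge $w \to w' = M(w,x,i)$. By definition the move transposes the subpath $x$ with a block $0^m$ of $m \geq 1$ consecutive south-east steps sitting immediately to its left (here $m = N(w,x) - (i-1) \geq 1$). If $H$ denotes the height just before that block in $w$, then in both $w$ and $w'$ the $m$ crossed south-east steps realize exactly the heights $H-1, \dots, H-m$ (they are only translated horizontally), whereas each lattice point traversed by $x$ is raised by $m$. A direct point-by-point comparison therefore gives $\operatorname{area}(w') - \operatorname{area}(w) = m\,|x| > 0$, since $m \geq 1$ and $|x| \geq 2$. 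In particular $\Gamma_n$ has no directed cycle, so it is acyclic, and any walk that follows edges backwards strictly decreases $\operatorname{area}$ and hence terminates after finitely many steps.

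The heart of the argument is to characterize the vertices with no incoming edge, and I claim that $w$ admits an incoming edge if and only if $w$ contains the factor $00$. The key point is that the move $M$ is reversible: if $w = M(w_0,x,i)$ then in $w$ the subpath $x$ is immediately followed by a nonempty block of south-east steps; conversely, whenever a subpath $x$ of $w$ is immediately followed by a $0$, sliding $x$ to the right past the maximal $0$-block following it produces a genuine Dyck path $w_0$ with $w = M(w_0,x,i)$ for the appropriate index $i$ (the movability of $x$ in $w_0$ holds because $x$ is then preceded by that $0$-block and followed by a $1$ or by the end of the word, the latter by maximality of the block). So $w$ has an incoming edge iff some subpath of $w$ is immediately followed by a $0$. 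Since a subpath returns to its base by a south-east step, this is equivalent to the occurrence of two consecutive south-east steps: given any $00$ at a point of height $h$, the segment from the most recent earlier visit to height $h$ up to that point is a subpath ending there and followed by the second $0$; conversely such a configuration plainly produces a $00$.

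It remains to see that $00 \notin w$ is equivalent to $w = \wmin$. If $w$ has no factor $00$, then every maximal block of south-east steps has length $1$, so $w = 1^{a_1} 0\, 1^{a_2} 0 \cdots 1^{a_r} 0$ where $r$ equals the number of south-east steps, hence $r = n$; since the $a_t \geq 1$ also sum to $n$, all $a_t = 1$ and $w = \wmin$. Thus $\wmin$ is the unique vertex with no incoming edge, i.e. the unique source. Finally, for connectivity I start from an arbitrary vertex $w$ and repeatedly follow an incoming edge backwards: the $\operatorname{area}$ strictly decreases at each step, so the process must stop, and it can stop only at a source, necessarily $\wmin$. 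Reversing this backward walk yields a directed path from $\wmin$ to $w$, so every vertex is reachable from $\wmin$ and $\Gamma_n$ is connected. The only delicate point, and the one I would write out most carefully, is the reversibility bookkeeping for $M$ underlying the characterization of incoming edges; the area computation and the $\wmin$ characterization are then routine.
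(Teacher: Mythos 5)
Your proof is correct and follows essentially the same strategy as the paper: the area (or your sum-of-heights variant) strictly increases along every edge, which gives acyclicity, and every path other than $\wmin$ is shown to admit an incoming edge, so that descending on area yields connectivity from the unique source $\wmin$. The only difference is cosmetic: where the paper exhibits one explicit predecessor (sliding down the final peak of a block of maximal height), you characterize \emph{all} vertices with an incoming edge as those containing the factor $00$ via the reversibility of $M$, which is a slightly more informative but equivalent way of reaching the same conclusion.
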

\begin{proof}
  First, every edge in $\Gamma_n$ increases strictly the area under
  the Dyck path, so there cannot be any oriented cycles.

  Let $w$ be any Dyck path not equal to $\wmin$. Then $w$ has a
  least one block $x$ of maximal height at least $2$. This block ends by a
  sequence of at least two $0$ preceded by a $1$. Let $w'$ be the Dyck
  path obtained by exchanging this $1$ and that sequence of $0$ except
  the last one. Then $w'$ has a strictly smaller area than $w$
  and there is an edge in $\Gamma_n$ from $w'$ to $w$.

  Therefore, by induction on the area, for every Dyck path $w$, there
  exists a path in $\Gamma_n$ from $\wmin$ to $w$.
\end{proof}

\begin{proposition}
  The directed graph $\Gamma_n$ is the Hasse diagram of a partial order.
\end{proposition}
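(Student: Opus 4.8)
The plan is to turn the digraph into a poset and then to check that no edge can be bypassed. I would define $w \leq w'$ to hold whenever there is a directed path from $w$ to $w'$ in $\Gamma_n$ (the empty path included, so that $w \leq w$). This relation is reflexive and transitive by construction, and it is antisymmetric because, as in the proof of the previous proposition, every edge strictly increases the area; thus $w < w'$ forces $\operatorname{area}(w) < \operatorname{area}(w')$ and no nontrivial directed cycle exists. Hence $\leq$ is a partial order, and by definition each of its covering relations is an edge of $\Gamma_n$. It remains to prove the converse, namely that every edge $w \to w'$ is a covering relation: there is no directed path from $w$ to $w'$ of length at least $2$ parallel to the edge.

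The main tool I would set up is a monotonicity lemma for the height profile. Writing $w = \alpha\, 0^{i}\, x\, \beta$ and $w' = M(w,x,i) = \alpha\, x\, 0^{i}\, \beta$ (the untouched $0$'s absorbed into $\alpha$), a single move leaves the word unchanged outside the window occupied by $0^{i}x$, and inside that window it only raises heights: $x$ is now performed from a base higher by $i$, while the descent $0^{i}$ is merely postponed. I would prove once and for all that $a \to b$ implies that the height profile of $b$ dominates that of $a$ pointwise, with equality outside the window of the move; by transitivity, $a \leq b$ then gives pointwise domination of profiles. Consequently, if $w < v < w'$ for the edge above, then $v$ must coincide with $w$ and $w'$ outside the window and have a profile squeezed between theirs inside it.

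The heart of the matter, and the step I expect to be the main obstacle, is to show that no such strictly intermediate $v$ can actually be reached. The key structural fact is that sliding $x$ by fewer than $i$ steps wedges it between down-steps: in $M(w,x,j)$ with $j<i$ the subpath $x$ is immediately followed by a $0$, hence is no longer movable, so a large slide of $x$ cannot be refined into several smaller slides of the same subpath. One must still exclude that some \emph{other} movable subpath inside the narrow window produces the same local change while staying below $w'$. I would control this using the exact area increment of a move, $\operatorname{area}(M(w,x,i)) - \operatorname{area}(w) = 2ik$ where $2k$ is the length of $x$, together with the fact that every edge adds at least $2$ to the area; this bounds the length of any parallel path and reduces the verification to finitely many candidate moves localized in the window, each of which is seen to break the domination below $w'$. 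Making this ``no intermediate in the window'' claim rigorous is the delicate combinatorial core, essentially a local shadow of the interval analysis developed in the later sections; once it is in place, every edge is a covering relation and $\Gamma_n$ is the Hasse diagram of $\leq$.
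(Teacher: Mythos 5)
Your setup is fine and matches the paper's: reachability gives a partial order (antisymmetry from the area), every cover is an edge, and the content is the converse, i.e.\ transitive reduction. You also correctly isolate the key observation that a slide of $x$ by $j<i$ steps leaves $x$ followed by a $0$ and hence immobile, so a long slide of $x$ cannot be split into shorter slides of $x$. The gap is exactly where you say it is, and the mechanism you propose to close it does not work. Your plan is to exclude a first move $w\to v_1$ other than the edge itself by showing that each candidate ``breaks the domination below $w'$'' in the pointwise height profile. But pointwise profile domination is only a necessary condition for $v_1\le w'$, and there are candidates that satisfy it. Concretely, take $x=(1,1,0,1,0,1,0,0)$ preceded by $0^2$ in $w$, and let $w'=M(w,x,2)$. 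The internal subpath $(1,0)$ sitting at the middle peak of $x$ is preceded by $0$ and followed by $1$, hence movable; sliding it gives a $v_1$ with $w\to v_1$, $v_1\ne w'$, whose height profile is dominated by that of $w'$ at every position (the window profiles are $h{-}1,h{-}2,h{-}1,h,h{+}1,h,h{-}1,h,h{-}1,h{-}2$ against $h{+}1,h{+}2,h{+}1,h{+}2,h{+}1,h{+}2,h{+}1,h,h{-}1,h{-}2$). So your finite check cannot rule this candidate out, and the area bound only limits the length of a hypothetical parallel path without excluding its first step. One still has $v_1\not\le w'$, but proving that requires more than profiles.

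The paper closes precisely this gap with \cref{tamari_chain}: refining any $\Gamma_n$-path from $w$ to $w'$ into single-step Tamari slides places all intermediate vertices in the Tamari interval $[w,w']$, which that lemma shows is a chain consisting exclusively of slides of the subpath $x$. This immediately kills candidates such as the $v_1$ above (a slide of a different subpath cannot lie in that interval), and the remaining case --- consecutive $\Gamma_n$-edges both sliding $x$ --- is ruled out by the immobility observation you already have. If you want to salvage your route, you would need to replace profile domination by an invariant fine enough to separate ``slides of $x$'' from all other moves; the height sequence indexed by up-steps, as used in the proof of \cref{tamari_chain}, is the natural choice, at which point you have essentially reconstructed the paper's argument.
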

\begin{proof}
  This means that $\Gamma_n$ is acyclic and transitively reduced. It
  remains only to prove the second property.
  
  So let us consider an edge $w \to M(w,x,i)$ and assume that one can
  find a sequence (S) of edges $w \to M(w,x',i') \to \dots \to M(w,x,i)$.

  Every edge in $\Gamma_n$ can be considered as a sequence of several
  moves where a subpath is slided by just one step in the north-west
  direction. As recalled in \S \ref{tamari}, these moves are exactly
  the cover relations in the Tamari partial order on the same set of Dyck
  paths.

  Therefore, one can refine the sequence (S) of edges in $\Gamma_n$
  into a sequence of cover moves in the Tamari lattice. By
  \cref{tamari_chain}, the interval in the Tamari lattice between $w$
  and $M(w,x,i)$ is just a chain obtained by repeated sliding of the subpath
  $x$. This implies that all the intermediate vertices of the
  sequence (S) are obtained by sliding the same subpath $x$.

  But in the graph $\Gamma_n$, there cannot be any two consecutive
  edges where exactly the same subpath is slided. Indeed, after being
  slided once, the subpath is followed by a letter $0$, hence no
  longer movable. It follows that the sequence (S) is reduced to the
  single edge $w \to M(w,x,i)$.
\end{proof}

Let us denote by $\sm$ the partial order relation on $\Dyck_n$ thus defined. The edges of $\Gamma_n$ are now understood as the cover relations in the poset $(\Dyck_n, \sm)$.

\medskip

We propose to call this partial order the \defi{dexter order} on Dyck
paths. This choice of terminology is motivated by the fact that a
symmetry is broken when compared to the Tamari lattice. This is also
linked to some pattern-exclusion in interval-posets for the Tamari
lattices, which reminds of the chirality of seashells.

\begin{figure}[h]
  \centering
  \begin{tikzpicture}[>=latex,line join=bevel,scale=0.3]
\node (node_2) at (876.0bp,107.0bp) [draw,draw=none] {$\vcenter{\hbox{$\begin{tikzpicture}[scale=0.15]  \draw[gray,very thin] (0, 0) grid (8, 2);  \draw[rounded corners=1, color=black, line width=1.5] (0, 0) -- (1, 1) -- (2, 0) -- (3, 1) -- (4, 2) -- (5, 1) -- (6, 0) -- (7, 1) -- (8, 0);\end{tikzpicture}$}}$};
  \node (node_3) at (1129.0bp,223.0bp) [draw,draw=none] {$\vcenter{\hbox{$\begin{tikzpicture}[scale=0.15]  \draw[gray,very thin] (0, 0) grid (8, 2);  \draw[rounded corners=1, color=black, line width=1.5] (0, 0) -- (1, 1) -- (2, 0) -- (3, 1) -- (4, 2) -- (5, 1) -- (6, 2) -- (7, 1) -- (8, 0);\end{tikzpicture}$}}$};
  \node (node_9) at (370.0bp,367.0bp) [draw,draw=none] {$\vcenter{\hbox{$\begin{tikzpicture}[scale=0.15]  \draw[gray,very thin] (0, 0) grid (8, 3);  \draw[rounded corners=1, color=black, line width=1.5] (0, 0) -- (1, 1) -- (2, 2) -- (3, 1) -- (4, 2) -- (5, 3) -- (6, 2) -- (7, 1) -- (8, 0);\end{tikzpicture}$}}$};
  \node (node_8) at (117.0bp,367.0bp) [draw,draw=none] {$\vcenter{\hbox{$\begin{tikzpicture}[scale=0.15]  \draw[gray,very thin] (0, 0) grid (8, 2);  \draw[rounded corners=1, color=black, line width=1.5] (0, 0) -- (1, 1) -- (2, 2) -- (3, 1) -- (4, 2) -- (5, 1) -- (6, 2) -- (7, 1) -- (8, 0);\end{tikzpicture}$}}$};
  \node (node_7) at (117.0bp,223.0bp) [draw,draw=none] {$\vcenter{\hbox{$\begin{tikzpicture}[scale=0.15]  \draw[gray,very thin] (0, 0) grid (8, 2);  \draw[rounded corners=1, color=black, line width=1.5] (0, 0) -- (1, 1) -- (2, 2) -- (3, 1) -- (4, 2) -- (5, 1) -- (6, 0) -- (7, 1) -- (8, 0);\end{tikzpicture}$}}$};
  \node (node_6) at (370.0bp,223.0bp) [draw,draw=none] {$\vcenter{\hbox{$\begin{tikzpicture}[scale=0.15]  \draw[gray,very thin] (0, 0) grid (8, 2);  \draw[rounded corners=1, color=black, line width=1.5] (0, 0) -- (1, 1) -- (2, 2) -- (3, 1) -- (4, 0) -- (5, 1) -- (6, 2) -- (7, 1) -- (8, 0);\end{tikzpicture}$}}$};
  \node (node_5) at (370.0bp,107.0bp) [draw,draw=none] {$\vcenter{\hbox{$\begin{tikzpicture}[scale=0.15]  \draw[gray,very thin] (0, 0) grid (8, 2);  \draw[rounded corners=1, color=black, line width=1.5] (0, 0) -- (1, 1) -- (2, 2) -- (3, 1) -- (4, 0) -- (5, 1) -- (6, 0) -- (7, 1) -- (8, 0);\end{tikzpicture}$}}$};
  \node (node_4) at (623.0bp,223.0bp) [draw,draw=none] {$\vcenter{\hbox{$\begin{tikzpicture}[scale=0.15]  \draw[gray,very thin] (0, 0) grid (8, 3);  \draw[rounded corners=1, color=black, line width=1.5] (0, 0) -- (1, 1) -- (2, 0) -- (3, 1) -- (4, 2) -- (5, 3) -- (6, 2) -- (7, 1) -- (8, 0);\end{tikzpicture}$}}$};
  \node (node_13) at (623.0bp,367.0bp) [draw,draw=none] {$\vcenter{\hbox{$\begin{tikzpicture}[scale=0.15]  \draw[gray,very thin] (0, 0) grid (8, 4);  \draw[rounded corners=1, color=black, line width=1.5] (0, 0) -- (1, 1) -- (2, 2) -- (3, 3) -- (4, 4) -- (5, 3) -- (6, 2) -- (7, 1) -- (8, 0);\end{tikzpicture}$}}$};
  \node (node_12) at (1048.0bp,367.0bp) [draw,draw=none] {$\vcenter{\hbox{$\begin{tikzpicture}[scale=0.15]  \draw[gray,very thin] (0, 0) grid (8, 3);  \draw[rounded corners=1, color=black, line width=1.5] (0, 0) -- (1, 1) -- (2, 2) -- (3, 3) -- (4, 2) -- (5, 3) -- (6, 2) -- (7, 1) -- (8, 0);\end{tikzpicture}$}}$};
  \node (node_1) at (623.0bp,107.0bp) [draw,draw=none] {$\vcenter{\hbox{$\begin{tikzpicture}[scale=0.15]  \draw[gray,very thin] (0, 0) grid (8, 2);  \draw[rounded corners=1, color=black, line width=1.5] (0, 0) -- (1, 1) -- (2, 0) -- (3, 1) -- (4, 0) -- (5, 1) -- (6, 2) -- (7, 1) -- (8, 0);\end{tikzpicture}$}}$};
  \node (node_10) at (876.0bp,223.0bp) [draw,draw=none] {$\vcenter{\hbox{$\begin{tikzpicture}[scale=0.15]  \draw[gray,very thin] (0, 0) grid (8, 3);  \draw[rounded corners=1, color=black, line width=1.5] (0, 0) -- (1, 1) -- (2, 2) -- (3, 3) -- (4, 2) -- (5, 1) -- (6, 0) -- (7, 1) -- (8, 0);\end{tikzpicture}$}}$};
  \node (node_11) at (469.0bp,511.0bp) [draw,draw=none] {$\vcenter{\hbox{$\begin{tikzpicture}[scale=0.15]  \draw[gray,very thin] (0, 0) grid (8, 3);  \draw[rounded corners=1, color=black, line width=1.5] (0, 0) -- (1, 1) -- (2, 2) -- (3, 3) -- (4, 2) -- (5, 1) -- (6, 2) -- (7, 1) -- (8, 0);\end{tikzpicture}$}}$};
  \node (node_0) at (623.0bp,19.0bp) [draw,draw=none] {$\vcenter{\hbox{$\begin{tikzpicture}[scale=0.15]  \draw[gray,very thin] (0, 0) grid (8, 1);  \draw[rounded corners=1, color=black, line width=1.5] (0, 0) -- (1, 1) -- (2, 0) -- (3, 1) -- (4, 0) -- (5, 1) -- (6, 0) -- (7, 1) -- (8, 0);\end{tikzpicture}$}}$};
  \draw [red,->] (node_1) ..controls (623.0bp,147.89bp) and (623.0bp,156.89bp)  .. (node_4);
  \draw [blue,->,dashed] (node_5) ..controls (267.78bp,154.06bp) and (229.62bp,171.25bp)  .. (node_7);
  \draw [blue,->,dashed] (node_2) ..controls (978.22bp,154.06bp) and (1016.4bp,171.25bp)  .. (node_3);
  \draw [red,->] (node_0) ..controls (703.03bp,47.204bp) and (739.52bp,59.607bp)  .. (node_2);
  \draw [red,->] (node_3) ..controls (1101.7bp,271.81bp) and (1090.1bp,292.17bp)  .. (node_12);
  \draw [red,->] (node_4) ..controls (623.0bp,278.31bp) and (623.0bp,287.03bp)  .. (node_13);
  \draw [red,->] (node_2) ..controls (876.0bp,147.89bp) and (876.0bp,156.89bp)  .. (node_10);
  \draw [blue,->,dashed] (node_10) ..controls (839.16bp,316.73bp) and (802.66bp,387.26bp)  .. (749.0bp,428.0bp) .. controls (704.94bp,461.45bp) and (647.52bp,481.29bp)  .. (node_11);
  \draw [red,->] (node_8) ..controls (200.14bp,409.12bp) and (222.33bp,419.35bp)  .. (243.0bp,428.0bp) .. controls (274.8bp,441.31bp) and (309.61bp,454.56bp)  .. (node_11);
  \draw [red,->] (node_0) ..controls (542.97bp,47.204bp) and (506.48bp,59.607bp)  .. (node_5);
  \draw [red,->] (node_5) ..controls (490.21bp,138.84bp) and (493.12bp,139.43bp)  .. (496.0bp,140.0bp) .. controls (604.27bp,161.39bp) and (636.96bp,149.25bp)  .. (node_10);
  \draw [blue,->,dashed] (node_10) ..controls (948.6bp,283.94bp) and (967.11bp,299.22bp)  .. (node_12);
  \draw [red,->] (node_6) ..controls (449.99bp,268.9bp) and (478.72bp,285.02bp)  .. (node_13);
  \draw [blue,->,dashed] (node_7) ..controls (117.0bp,275.69bp) and (117.0bp,302.1bp)  .. (node_8);
  \draw [red,->] (node_10) ..controls (776.3bp,279.96bp) and (757.43bp,290.55bp)  .. (node_13);
  \draw [red,->] (node_7) ..controls (204.15bp,272.91bp) and (243.42bp,294.95bp)  .. (node_9);
  \draw [red,->] (node_1) ..controls (520.78bp,154.06bp) and (482.62bp,171.25bp)  .. (node_6);
  \draw [blue,->,dashed] (node_6) ..controls (370.0bp,271.44bp) and (370.0bp,291.25bp)  .. (node_9);
  \draw [red,->] (node_0) ..controls (623.0bp,45.551bp) and (623.0bp,54.96bp)  .. (node_1);
  \draw [red,->] (node_5) ..controls (370.0bp,152.23bp) and (370.0bp,166.73bp)  .. (node_6);
  \draw [red,->] (node_2) ..controls (783.82bp,149.54bp) and (758.9bp,160.76bp)  .. (node_4);
\end{tikzpicture}
  \caption{Hasse diagram of the poset $(\Dyck_4, \leq)$. Edge colors will be explained and used in section \ref{hpoly}.}
  \label{fig_dyck4}
\end{figure}
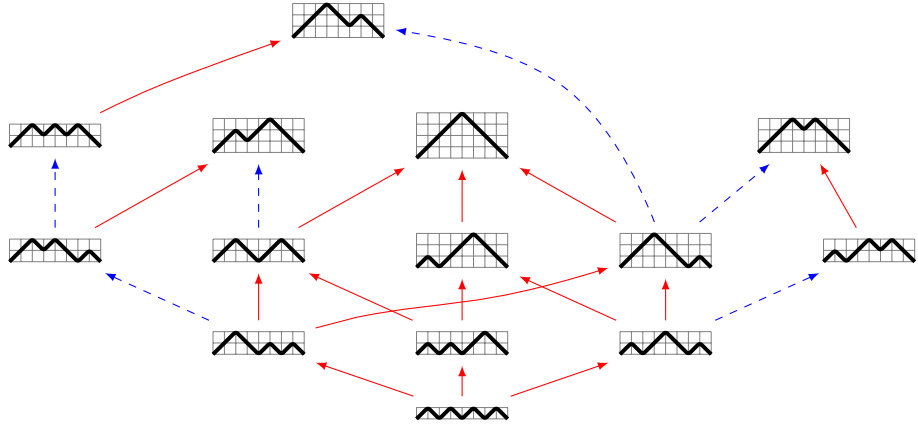

\subsection{First properties}

\begin{proposition}
  The maximal elements of $(\Dyck_n, \sm)$ are exactly the
  block-indecomposable Dyck paths that do not contain any subpath that is
  both preceded by $0$ and followed by $1$.
\end{proposition}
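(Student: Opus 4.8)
The plan is to reduce the statement to a direct analysis of when a Dyck path admits a movable subpath, since by construction $w$ is a maximal element of $(\Dyck_n,\sm)$ precisely when it has no outgoing edge in $\Gamma_n$. First I would observe that every movable subpath $x$ automatically satisfies $N(w,x)\geq 1$, because it is preceded by a $0$ by definition, so that at least one target $M(w,x,i)$ exists. Hence $w$ is maximal if and only if $w$ has no movable subpath at all, and the task becomes to characterize the Dyck paths with no movable subpath and to match this with the two conditions in the statement.

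Next I would exploit the fact that the definition of movability is a disjunction of two cases and treat them separately. For the first case (where $x$ ends at the last letter of $w$), I would argue that any subpath ending at the last letter must return to height $0$, and so coincides with the final block of $w$; such a subpath is preceded by a $0$ exactly when there is at least one earlier block, that is, exactly when $w$ is block-decomposable. Thus a movable subpath of the first kind exists if and only if $w$ has two or more blocks. For the second case (where $x$ is preceded by $0$ and followed by the letter $1$), the movability condition is literally the existence of a subpath that is both preceded by $0$ and followed by $1$.

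Finally I would combine these two observations. Since a single subpath cannot simultaneously end at the last letter of $w$ and be followed by a letter, the movable subpaths of $w$ are exactly the union of the two families above, and the defining disjunction shows that these exhaust all movable subpaths. Consequently $w$ has no movable subpath if and only if it is block-indecomposable \emph{and} contains no subpath that is both preceded by $0$ and followed by $1$, which is precisely the asserted characterization of the maximal elements.

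I expect the only delicate point to be the analysis of the first case: one must check that a subpath terminating at the last step of $w$ is forced to start and end at height $0$, hence is exactly the last block, and then correctly translate ``preceded by $0$'' into ``not the first block'' (so that in the block-indecomposable case the whole path is a subpath but, standing at the start of $w$, fails to be preceded by $0$ and is therefore not movable). Once this identification is made, and one is careful that the two cases of the movability definition are mutually exclusive and jointly exhaustive, the equivalence follows by negating each condition.
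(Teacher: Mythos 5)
Your proposal is correct and follows essentially the same route as the paper: reduce maximality to the non-existence of movable subpaths, split the movability definition into its two disjuncts, identify the first kind (subpath ending at the last letter, hence the last block, preceded by $0$ iff $w$ has at least two blocks) with block-decomposability, and the second kind with the forbidden subpaths. Your extra care about the two cases being mutually exclusive and about the whole path not being "preceded by $0$" is a welcome elaboration of details the paper leaves implicit.
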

\begin{proof}
  The property of $w$ being maximal is equivalent to the non-existence
  of movable subpaths in $w$.

  According to their definition, movable subpaths can exist in two
  distinct ways. The first way is when the movable subpath is the last
  block of $w$. This happens if and only if $w$ contains at least two
  blocks. The second way is the existence of a subpath preceded by
  $0$ and followed by $1$.

  Therefore being maximal is equivalent to being block-indecomposable and
  not having this second kind of subpaths.
\end{proof}

\begin{proposition}
  The sets of maximal elements are counted by the Motzkin numbers (\oeis{A1006}).
\end{proposition}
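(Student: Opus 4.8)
The plan is to combine the characterization of maximal elements just proved with a recursive analysis of the block structure, and to turn that analysis into a functional equation matching the one that defines the Motzkin generating function.

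First I would fix terminology. By the previous proposition, the maximal elements of $\Dyck_n$ are exactly the block-indecomposable Dyck paths of size $n$ containing no subpath that is simultaneously preceded by $0$ and followed by $1$; call such a path \emph{avoiding}. Let $a_n$ be the number of avoiding block-indecomposable paths of size $n$ and set $A(x) = \sum_{n \geq 1} a_n x^n$. The goal becomes the identity $a_n = M_{n-1}$, where $M_m$ is the $m$-th Motzkin number.

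The heart of the argument is a decomposition lemma. Every block-indecomposable path of size $n$ is uniquely $w = 1\,v\,0$ for a Dyck path $v$ of size $n-1$; write $v = B_1 \cdots B_k$ for its factorization into blocks. The key observation is that the subpaths of $w$ lying at height $1$ are precisely the elevated blocks $B_1, \dots, B_k$ — no excursion strictly above height $1$ can span two blocks, since $w$ returns to height $1$ between consecutive blocks — while every subpath at height $\geq 2$ sits strictly inside a single $B_i$ and has the same left and right neighbouring letters in $w$ as it does inside $B_i$. Reading off the neighbours at height $1$: $B_1$ is preceded by the initial $1$ of $w$ and $B_k$ is followed by the final $0$ of $w$, so neither is "preceded by $0$ and followed by $1$"; but any middle block $B_i$ with $2 \leq i \leq k-1$ is preceded by the last (down) step of $B_{i-1}$ and followed by the first (up) step of $B_{i+1}$, hence is a forbidden subpath. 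Since the higher subpaths localize inside the blocks, I would conclude that $w$ is avoiding if and only if $k \leq 2$ and each $B_i$ is itself avoiding.

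From this the enumeration follows formally. As $v$ is an ordered sequence of $k \in \{0,1,2\}$ avoiding blocks and $|w| = 1 + \sum_i |B_i|$, the decomposition yields
\[
  A(x) = x\bigl(1 + A(x) + A(x)^2\bigr).
\]
Writing $A(x) = x\,M(x)$ transforms this into $M = 1 + xM + x^2 M^2$, the defining equation of the Motzkin generating function $M(x) = \sum_{m \geq 0} M_m x^m$. Hence $A(x) = x\,M(x)$, so $a_n = M_{n-1}$ and the numbers of maximal elements are the Motzkin numbers, as claimed. I expect the only real obstacle to be the decomposition lemma, specifically the careful verification that the height-$1$ subpaths of $1\,v\,0$ are exactly the elevated blocks and that every higher subpath is genuinely internal to a block, so that the avoidance condition splits cleanly into the block-count constraint $k \leq 2$ and the recursive avoidance of the $B_i$; once this is in place, the sequence-of-blocks generating function and the match with the Motzkin equation are routine.
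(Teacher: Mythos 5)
Your proposal is correct and follows essentially the same route as the paper: strip the outer $1\cdots 0$, observe that the avoidance condition forces at most two blocks each of which is recursively avoiding, and read off the Motzkin functional equation $M = 1 + xM + x^2M^2$ (your $A$ is the paper's $tM$). The decomposition lemma you flag as the main obstacle is exactly the step the paper also relies on, and your verification of it is sound.
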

\begin{proof}
  Removing the initial $1$ and the final $0$ defines a bijection from
  the set of maximal elements to the set $\mathscr{M}$ of Dyck paths
  that do not contain any subpath that is both preceded by $0$ and
  followed by $1$. Then the decomposition into blocks of such a path
  has at most two blocks and these blocks satisfy the same
  condition. One therefore gets
  \begin{equation}
    M = 1 + t M + t^2 M^2 =  1 + t + 2 t^2 + 4 t^3 + 9 t^4 + \dots
  \end{equation}
  for the generating series $M$ of the set $\mathscr{M}$. This is the usual
  equation for the generating series of Motzkin numbers.
\end{proof}

Let us now give examples of intervals in $\Dyck_n$ where some
properties do not hold. The interval between $\wmin$ and the Dyck path
$(1, 1, 1, 1, 0, 0, 1, 0, 0, 1, 0, 0)$ is not semi-distributive, and
therefore not congruence-uniform ; it is also not extremal. For these
notions, see general references on lattice theory such as
\cite{freese, gratzer}. The interval between $\wmin$ and the Dyck
path
\begin{equation*}
(1, 1, 1, 1, 1, 0, 0, 1, 0, 0, 0, 1, 0, 0)
\end{equation*}
has a Coxeter polynomial with some roots not on the unit circle (see
\cite{chap_cats, lenzing} for the representation-theoretic context).

%% Give AN EXAMPLE of roots on the circle but not semi-simple ? bof

%% Q: is every interval congruence-uniform ? NO because ``Congruence uniform lattices are in particular semi-distributive''.

\begin{remark}
  There are two simple natural inclusions of posets of $\Dyck_n$ in
  $\Dyck_{n+1}$, by concatenation of the Dyck path $(1,0)$ at the
  start or at the end.
\end{remark}

\subsection{Relations to other posets}

In this subsection, we prove that the partial order $(\Dyck_n, \leq)$
stands somewhere between the Tamari partial order \cite{tamari_friedman, tamari_festschrift} and the less
well-known comb partial order introduced by Pallo in \cite{pallo}.

The Tamari lattice structure on Dyck paths is recalled in \S \ref{tamari}.
\begin{proposition}
  Let $w$ and $w'$ be two Dyck paths in $\Dyck_n$ such that $w \sm w'$.
  Then $w$ is smaller than $w'$ in the Tamari order.
\end{proposition}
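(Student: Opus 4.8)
The plan is to reduce the claim to the elementary one-step slides and to recognise each of them as a Tamari cover. Since the previous proposition shows that $\Gamma_n$ is the Hasse diagram of $(\Dyck_n, \sm)$, the relation $w \sm w'$ means precisely that there is a directed path $w = w_0 \to w_1 \to \dots \to w_m = w'$ in $\Gamma_n$. As the Tamari order is itself a partial order, and hence transitive, it suffices to treat a single edge: I would show that $w_j \to w_{j+1}$ implies $w_j \leq w_{j+1}$ in the Tamari order, and then compose these relations along the path.

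So I fix one edge $w \to M(w,x,i)$. Writing $w = P\, 0^k\, x\, S$, where $0^k$ is the maximal run of letters $0$ immediately preceding the movable subpath $x$ (so $k = N(w,x) \geq 1$, since a movable subpath is by definition preceded by a $0$), the definition of $M(w,x,i)$ gives $M(w,x,i) = P\, 0^{k-i}\, x\, 0^i\, S$. This is realised by the chain of single-step slides
\begin{equation*}
P\, 0^{k}\, x\, S \;\to\; P\, 0^{k-1}\, x\, 0\, S \;\to\; \dots \;\to\; P\, 0^{k-i}\, x\, 0^i\, S,
\end{equation*}
where each arrow moves exactly one letter $0$ from just before $x$ to just after $x$. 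I then invoke the description recalled in \S\ref{tamari}: since $x$ is a subpath, it is a primitive (elevated) Dyck factor of the form $1\,y\,0$ for some Dyck path $y$, so each single-step move takes the factor $0\,x = 0\,1\,y\,0$ to $x\,0 = 1\,y\,0\,0$, lifting the whole arch $x$ by one unit. This is exactly a cover relation of the Tamari order, oriented towards larger area. Transitivity along the displayed chain then yields $w \leq M(w,x,i)$ in the Tamari order, which completes the reduction.

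The only delicate point is the identification of a single-step slide with a Tamari cover \emph{in the increasing direction}. This rests on two facts: that a movable subpath is a primitive Dyck path, so that the factor $0\,x$ genuinely has the shape of a Tamari valley $0\,1\,y\,0$; and that the orientation is the increasing one, which is consistent both with the observation (used to prove that $\Gamma_n$ is acyclic) that every edge strictly increases the area, and with the convention in which $\wmin$ is the Tamari minimum. Both are precisely what the dictionary between Dyck paths and Tamari covers in \S\ref{tamari} supplies, so once that dictionary is invoked the argument is immediate. No further computation is needed beyond checking that the intermediate words in the chain are genuine Dyck paths, which follows because each lift keeps the path weakly above the horizontal axis.
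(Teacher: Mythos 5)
Your proof is correct and follows essentially the same route as the paper: reduce by transitivity to a single edge of $\Gamma_n$, then observe that the slide by $i$ steps factors into $i$ single-step slides, each of which is exactly a Tamari cover relation as recalled in \S\ref{tamari}. You simply spell out the details (the factorisation $w = P\,0^k\,x\,S$ and the identification of a one-step slide with the exchange $0\,x \to x\,0$) that the paper leaves as ``clear.''
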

\begin{proof}
  By transitivity, it is enough to prove this under the additional
  assumption that there is an edge $w \to w'$ in $\Gamma_n$. It is
  clear that such an edge can be performed using a sequence of several
  cover relations in the Tamari lattice, which are just made by
  sliding a subpath by one step in the north-west direction.
\end{proof}

The comb partial order (or left-arm rotation order) was initially
defined in \cite{pallo} as a partial order on the set of binary
trees. Its cover relations are a subset of the cover relations of the
Tamari lattice, namely left-to-right rotations $(ab)c \to a(bc)$ of
binary trees where the two rotated vertices are on the leftmost branch
of the first binary tree. Using the same bijection $\sigma$ as \cite{berbon}
and the proof of their Prop. 2.1, one can see that these restricted
rotations correspond precisely to Tamari cover moves in $\Dyck_n$
where the slided subpath is at height $0$.

To summarize, the cover moves in the comb partial order on $\Dyck_n$
are the Tamari cover moves where the slided subpath is at height $0$.
\begin{proposition}
  Let $w$ and $w'$ be two Dyck paths in $\Dyck_n$ such that $w$ is
  smaller than $w'$ in the comb partial order. Then $w \leq w'$.
\end{proposition}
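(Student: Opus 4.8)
The plan is to use the transitivity of $\sm$ to reduce to the case of a single comb cover relation $w \to w'$, and then to recognize this comb cover directly as a dexter cover relation, i.e.\ as an edge of $\Gamma_n$. Since the dexter order is by definition generated by the edges of $\Gamma_n$, it suffices to show that every comb cover relation is already such an edge; the general case follows by composing the relations along a chain of comb covers from $w$ to $w'$. So I would fix a comb cover relation $w \to w'$ and, by the characterization recalled just above, view it as a Tamari cover move that slides, one step to the north-west, a subpath $x$ sitting at height $0$. A subpath at height $0$ starts and ends on the horizontal line while staying strictly above it in between, hence is exactly a single block of $w$. A first small observation is that $x$ cannot be the initial block, since a north-west slide requires a letter $0$ immediately before $x$, whereas the first block is preceded by nothing; thus $x$ is preceded in $w$ by the final $0$ of the previous block.

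The key point, and essentially the only thing requiring an argument, is that this block $x$ is a \emph{movable} subpath of $w$. It is preceded by $0$ by the previous observation. If $x$ is the last block of $w$, then it ends at the last letter of $w$ and is therefore movable. Otherwise $x$ is followed by some step; since $x$ ends on the horizontal line at height $0$, this following step cannot be a $0$ (that would push the path below the horizontal line), so it is necessarily a $1$. In both cases $x$ meets the definition of a movable subpath. This is precisely where the height-$0$ hypothesis built into comb moves matches the movability condition of the dexter order.

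It then remains to identify the slide itself. Writing $w = P\,0^{k}\,x\,S$, where $0^k$ with $k = N(w,x)\geq 1$ is the maximal run of $0$ letters just before $x$, sliding $x$ one step to the north-west past the last of these $0$ letters yields $P\,0^{k-1}\,x\,0\,S$, which is exactly $M(w,x,1)$. Hence $w \to w' = M(w,x,1)$ is an edge of $\Gamma_n$, so $w \sm w'$, and transitivity finishes the argument. I do not expect any serious obstacle: the entire content of the proof is the observation that a block at height $0$ is automatically movable, so that comb covers form a subset of the dexter covers.
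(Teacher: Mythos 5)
Your proof is correct and follows the same route as the paper's: reduce to a single comb cover, observe that the height-$0$ subpath is preceded by $0$ and, being at height $0$, cannot be followed by $0$, hence is movable, so the one-step slide is an edge of $\Gamma_n$. The paper states exactly this in one sentence; your version merely spells out the details.
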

\begin{proof}
  This holds because every cover move in the comb poset is also a
  cover move in $(\Dyck_n, \leq)$, as a subpath at height $0$ cannot be followed
  by the letter $0$.
\end{proof}

\begin{figure}[h]
  \centering
\begin{tikzpicture}[scale=0.35,>=latex,line join=bevel,]
\node (node_4) at (286.0bp,223.0bp) [draw,draw=none] {$\vcenter{\hbox{$\begin{tikzpicture}[scale=.15]  \draw[gray,very thin] (0, 0) grid (6, 3);  \draw[rounded corners=1, color=black, line width=2] (0, 0) -- (1, 1) -- (2, 2) -- (3, 3) -- (4, 2) -- (5, 1) -- (6, 0);\end{tikzpicture}$}}$};
  \node (node_3) at (89.0bp,223.0bp) [draw,draw=none] {$\vcenter{\hbox{$\begin{tikzpicture}[scale=.15]  \draw[gray,very thin] (0, 0) grid (6, 2);  \draw[rounded corners=1, color=black, line width=2] (0, 0) -- (1, 1) -- (2, 2) -- (3, 1) -- (4, 2) -- (5, 1) -- (6, 0);\end{tikzpicture}$}}$};
  \node (node_2) at (89.0bp,107.0bp) [draw,draw=none] {$\vcenter{\hbox{$\begin{tikzpicture}[scale=.15]  \draw[gray,very thin] (0, 0) grid (6, 2);  \draw[rounded corners=1, color=black, line width=2] (0, 0) -- (1, 1) -- (2, 2) -- (3, 1) -- (4, 0) -- (5, 1) -- (6, 0);\end{tikzpicture}$}}$};
  \node (node_1) at (286.0bp,107.0bp) [draw,draw=none] {$\vcenter{\hbox{$\begin{tikzpicture}[scale=.15]  \draw[gray,very thin] (0, 0) grid (6, 2);  \draw[rounded corners=1, color=black, line width=2] (0, 0) -- (1, 1) -- (2, 0) -- (3, 1) -- (4, 2) -- (5, 1) -- (6, 0);\end{tikzpicture}$}}$};
  \node (node_0) at (187.0bp,19.0bp) [draw,draw=none] {$\vcenter{\hbox{$\begin{tikzpicture}[scale=.15]  \draw[gray,very thin] (0, 0) grid (6, 1);  \draw[rounded corners=1, color=black, line width=2] (0, 0) -- (1, 1) -- (2, 0) -- (3, 1) -- (4, 0) -- (5, 1) -- (6, 0);\end{tikzpicture}$}}$};
  \draw [black,->] (node_1) ..controls (286.0bp,147.89bp) and (286.0bp,156.89bp)  .. (node_4);
  \draw [black,->] (node_2) ..controls (89.0bp,152.23bp) and (89.0bp,166.73bp)  .. (node_3);
  \draw [black,->] (node_0) ..controls (156.85bp,46.456bp) and (144.63bp,57.185bp)  .. (node_2);
  \draw [black,->] (node_0) ..controls (217.46bp,46.456bp) and (229.81bp,57.185bp)  .. (node_1);
\end{tikzpicture}
\begin{tikzpicture}[scale=0.35,>=latex,line join=bevel,]
\node (node_4) at (286.0bp,223.0bp) [draw,draw=none] {$\vcenter{\hbox{$\begin{tikzpicture}[scale=.15]  \draw[gray,very thin] (0, 0) grid (6, 3);  \draw[rounded corners=1, color=black, line width=2] (0, 0) -- (1, 1) -- (2, 2) -- (3, 3) -- (4, 2) -- (5, 1) -- (6, 0);\end{tikzpicture}$}}$};
  \node (node_3) at (89.0bp,223.0bp) [draw,draw=none] {$\vcenter{\hbox{$\begin{tikzpicture}[scale=.15]  \draw[gray,very thin] (0, 0) grid (6, 2);  \draw[rounded corners=1, color=black, line width=2] (0, 0) -- (1, 1) -- (2, 2) -- (3, 1) -- (4, 2) -- (5, 1) -- (6, 0);\end{tikzpicture}$}}$};
  \node (node_2) at (89.0bp,107.0bp) [draw,draw=none] {$\vcenter{\hbox{$\begin{tikzpicture}[scale=.15]  \draw[gray,very thin] (0, 0) grid (6, 2);  \draw[rounded corners=1, color=black, line width=2] (0, 0) -- (1, 1) -- (2, 2) -- (3, 1) -- (4, 0) -- (5, 1) -- (6, 0);\end{tikzpicture}$}}$};
  \node (node_1) at (286.0bp,107.0bp) [draw,draw=none] {$\vcenter{\hbox{$\begin{tikzpicture}[scale=.15]  \draw[gray,very thin] (0, 0) grid (6, 2);  \draw[rounded corners=1, color=black, line width=2] (0, 0) -- (1, 1) -- (2, 0) -- (3, 1) -- (4, 2) -- (5, 1) -- (6, 0);\end{tikzpicture}$}}$};
  \node (node_0) at (187.0bp,19.0bp) [draw,draw=none] {$\vcenter{\hbox{$\begin{tikzpicture}[scale=.15]  \draw[gray,very thin] (0, 0) grid (6, 1);  \draw[rounded corners=1, color=black, line width=2] (0, 0) -- (1, 1) -- (2, 0) -- (3, 1) -- (4, 0) -- (5, 1) -- (6, 0);\end{tikzpicture}$}}$};
  \draw [red,->] (node_1) ..controls (286.0bp,147.89bp) and (286.0bp,156.89bp)  .. (node_4);
  \draw [blue,->,dashed] (node_2) ..controls (89.0bp,152.23bp) and (89.0bp,166.73bp)  .. (node_3);
  \draw [red,->] (node_0) ..controls (156.85bp,46.456bp) and (144.63bp,57.185bp)  .. (node_2);
  \draw [red,->] (node_0) ..controls (217.46bp,46.456bp) and (229.81bp,57.185bp)  .. (node_1);
  \draw [red,->] (node_2) ..controls (160.25bp,149.23bp) and (178.98bp,160.07bp)  .. (node_4);
\end{tikzpicture}
\begin{tikzpicture}[scale=0.35,>=latex,line join=bevel,]
\node (node_4) at (187.0bp,325.0bp) [draw,draw=none] {$\vcenter{\hbox{$\begin{tikzpicture}[scale=.15]  \draw[gray,very thin] (0, 0) grid (6, 3);  \draw[rounded corners=1, color=black, line width=2] (0, 0) -- (1, 1) -- (2, 2) -- (3, 3) -- (4, 2) -- (5, 1) -- (6, 0);\end{tikzpicture}$}}$};
  \node (node_3) at (89.0bp,209.0bp) [draw,draw=none] {$\vcenter{\hbox{$\begin{tikzpicture}[scale=.15]  \draw[gray,very thin] (0, 0) grid (6, 2);  \draw[rounded corners=1, color=black, line width=2] (0, 0) -- (1, 1) -- (2, 2) -- (3, 1) -- (4, 2) -- (5, 1) -- (6, 0);\end{tikzpicture}$}}$};
  \node (node_2) at (109.0bp,107.0bp) [draw,draw=none] {$\vcenter{\hbox{$\begin{tikzpicture}[scale=.15]  \draw[gray,very thin] (0, 0) grid (6, 2);  \draw[rounded corners=1, color=black, line width=2] (0, 0) -- (1, 1) -- (2, 2) -- (3, 1) -- (4, 0) -- (5, 1) -- (6, 0);\end{tikzpicture}$}}$};
  \node (node_1) at (286.0bp,209.0bp) [draw,draw=none] {$\vcenter{\hbox{$\begin{tikzpicture}[scale=.15]  \draw[gray,very thin] (0, 0) grid (6, 2);  \draw[rounded corners=1, color=black, line width=2] (0, 0) -- (1, 1) -- (2, 0) -- (3, 1) -- (4, 2) -- (5, 1) -- (6, 0);\end{tikzpicture}$}}$};
  \node (node_0) at (168.0bp,19.0bp) [draw,draw=none] {$\vcenter{\hbox{$\begin{tikzpicture}[scale=.15]  \draw[gray,very thin] (0, 0) grid (6, 1);  \draw[rounded corners=1, color=black, line width=2] (0, 0) -- (1, 1) -- (2, 0) -- (3, 1) -- (4, 0) -- (5, 1) -- (6, 0);\end{tikzpicture}$}}$};
  \draw [black,->] (node_1) ..controls (250.75bp,250.59bp) and (242.19bp,260.44bp)  .. (node_4);
  \draw [black,->] (node_2) ..controls (100.98bp,148.12bp) and (99.116bp,157.42bp)  .. (node_3);
  \draw [black,->] (node_0) ..controls (150.19bp,45.962bp) and (143.33bp,55.966bp)  .. (node_2);
  \draw [black,->] (node_0) ..controls (189.15bp,48.111bp) and (198.9bp,61.611bp)  .. (207.0bp,74.0bp) .. controls (227.12bp,104.77bp) and (248.17bp,140.7bp)  .. (node_1);
  \draw [black,->] (node_3) ..controls (123.9bp,250.59bp) and (132.36bp,260.44bp)  .. (node_4);
\end{tikzpicture}
\caption{Comparison of three partial orders: comb, dexter and Tamari}
\end{figure}
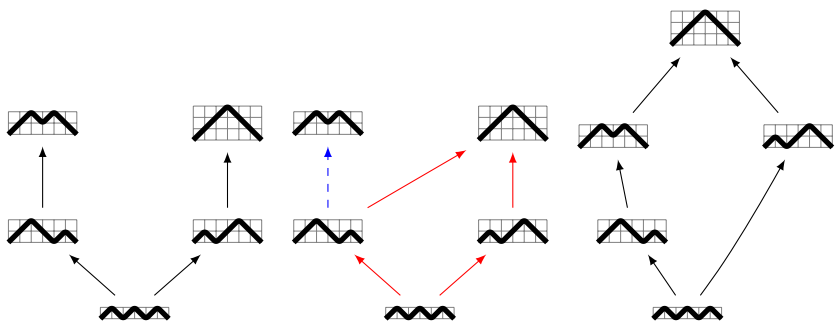

\section{A monoid on pseudo-Dyck paths}

Let us first introduce another kind of decomposition for
block-indecomposable Dyck paths.
\begin{proposition}
  \label{level_decomposition}
  Let $k\geq 0$ be an integer. Every block-indecomposable Dyck path ending
  with exactly $k + 1$ letters $0$ can be uniquely written as
  \begin{equation}
    \label{pieces}
    (1, w_1, 1, w_2, 1, w_3, \dots, 1, w_{k}, 1, 0^{k+1})
  \end{equation}
  for some Dyck paths $w_1, w_2, \dots, w_{k}$. This defines a
  bijection between block-indecomposable Dyck paths ending with exactly
  $k + 1$ letters $0$ and sequences of $k$ Dyck paths.
\end{proposition}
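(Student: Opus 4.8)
The plan is to establish the two directions of the claimed bijection separately: first check that the word in \eqref{pieces} always lands in the advertised set, and then build its inverse, thereby getting uniqueness for free.

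First I would treat the forward direction. Given Dyck paths $w_1, \dots, w_k$, I claim the word in \eqref{pieces} is a block-indecomposable Dyck path ending with exactly $k+1$ letters $0$, and this is a routine height computation. Reading the word from left to right, the $j$-th explicit letter $1$ (the ``separators'') carries the path from height $j-1$ to height $j$, and each $w_j$ is read off while the path sits at base height $j$, so it keeps the height $\geq j \geq 1$ and returns to height $j$. Thus after the last separator the path is at height $k+1$, and the final block $0^{k+1}$ brings it down to $0$, passing through heights $k, k-1, \dots, 1$ strictly above $0$. Hence the path stays $\geq 1$ strictly between its endpoints, so it is block-indecomposable; and since the letter immediately preceding $0^{k+1}$ is a separator $1$, the trailing run of $0$'s has length exactly $k+1$.

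Then I would build the inverse by induction on $k$, peeling the decomposition off from the right end. For $k=0$ the only block-indecomposable path ending with exactly one $0$ is $(1,0)$ itself: such a path stays $\geq 1$ in its interior, so the $1$ preceding the final $0$ can only be the first letter. For the inductive step, let $D$ be block-indecomposable ending with exactly $k+1$ letters $0$, and write $D = u\, 1\, 0^{k+1}$. The run $0^{k+1}$ starts at height $k+1$, so $u$ ends at height $k$ and the letter before $0^{k+1}$ is forced to be the last separator. Inside $u$ I locate the last visit to height $k-1$; the step leaving it must be a $1$ (a $0$ would force the path back to height $k-1$ before reaching the end of $u$ at height $k$), and everything in $u$ after this $1$ is a word starting and ending at height $k$ that never dips below $k$, i.e. a shifted Dyck path, which I take to be $w_k$. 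Replacing the suffix $1\, w_k\, 1\, 0^{k+1}$ of $D$ by $1\, 0^{k}$ produces a block-indecomposable path $D'$ ending with exactly $k$ letters $0$; applying the inductive hypothesis to $D'$ yields $w_1, \dots, w_{k-1}$, and by construction $D$ is then the word \eqref{pieces}. Uniqueness is automatic, since at each stage the final descent and the last separator are forced, so $w_k$ and $D'$ are determined by $D$.

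The one delicate point, and the step I expect to need the most care, is the extraction of $w_k$: one must verify that after the last visit of $u$ to height $k-1$ the path never returns below height $k$ (this is exactly what ``last visit'' guarantees, since steps change the height by $\pm 1$), and that the reduced path $D'$ is again block-indecomposable with \emph{exactly} $k$ trailing zeros rather than more. Both facts reduce to the same height bookkeeping as in the forward direction, so beyond this careful tracking of heights and of the forced separator positions I anticipate no real obstacle.
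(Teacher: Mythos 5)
Your proof is correct and follows essentially the same route as the paper's: both identify $w_k$ as the maximal suffix (before the final $1\,0^{k+1}$) staying at height $\geq k$ — equivalently, everything after the last visit to height $k-1$ — and then peel off the remaining $w_i$ from the right, the paper by direct iteration on the truncated words $v_k, v_{k-1}, \dots$ and you by an equivalent induction on $k$ via the reduced path $D'$. Your write-up is simply more explicit about the forward direction and about why the reduced path is again block-indecomposable with exactly $k$ trailing zeros, points the paper leaves as ``clearly''.
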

\begin{proof}
  Clearly, every Dyck path of the given shape \eqref{pieces} is
  block-indecomposable and ends with exactly $k+1$ letters $0$.

  Conversely, let us start with a block-indecomposable Dyck path $w$ ending
  with exactly $k + 1$ letters $0$. The aim is to recover all the
  $w_i$. Consider the word $v_k$ obtained from $w$ by removing the
  final $(1, 0^{k+1})$ part. The final height of $v_k$ is $k$. Let
  $w_{k}$ be the largest suffix of $v_k$ in which the height is greater
  than or equal to $k$. The letter before $w_{k}$ in $w$ must be
  $1$. Then one can define $v_{k-1}$ by removing $(1,w_k)$ at the end of
  $v_k$. Iterating this process of removing the largest suffix of
  height staying above the final height, one defines $w_1,\dots,w_k$
  that are all Dyck paths. This yields the desired decomposition,
  which is clearly unique.
\end{proof}
Let us call this decomposition the \defi{level-decomposition} of
$w$. We will write 
\begin{equation*}
  w = \level(w_1, w_2, \dots, w_{k})
\end{equation*}
to denote this situation.

In fact, both the decomposition into blocks and the
level-decomposition are special cases of products inside a monoid
structure. Let us describe this monoid.

\smallskip

It turns out to be more convenient to work (for a moment) with Dyck
paths minus their initial and final letters. Let us therefore define a
\defi{pseudo-Dyck path} to be a path obtained in this way from a
non-empty Dyck path. For a Dyck path $w$, let $\ov{w}$ denote the
pseudo-Dyck path obtained by removing the initial and final letters of
$w$.

Note that the height in a pseudo-Dyck path can reach the value $-1$,
taking $0$ as initial height.

\smallskip

Let us introduce a product $\pscolle$ on the set of pseudo-Dyck
paths. Let $u$ and $v$ be two such paths. Write $u = (u', 0^k)$ where
$u'$ is either empty (if $u$ itself is empty) or ends with the letter
$1$. Write $v = v' v''$ where $v'$ is the largest prefix where the
height is positive or zero. The product is
\begin{equation}
u \pscolle v = (u', v', 0^k, v'').
\end{equation}
This is again a pseudo-Dyck path, as it lies above the concatenation
$u v$, which is itself clearly a pseudo-Dyck path.

For example, one gets
\begin{equation*}
  (1,0) \pscolle (0,1) = (1, 0, 0, 1)
\end{equation*}
and
\begin{equation*}
  (0,1,1,0) \pscolle (1,0,1,0) = (0, 1, 1, 1, 0, 1, 0, 0).
\end{equation*}

\begin{lemma}
  The product $\pscolle$ is an associative product on the set of
  pseudo-Dyck paths.
\end{lemma}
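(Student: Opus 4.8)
The plan is to prove associativity by computing both $(u \pscolle v)\pscolle w$ and $u \pscolle (v \pscolle w)$ as explicit words and checking that they coincide. Since the empty path is a two-sided identity (immediate from the definition), I may assume all three paths are nonempty. First I would record two complementary decompositions of a pseudo-Dyck path $x$. On the \emph{left}, write $x = L(x)\,0^{z(x)}$, where $z(x)$ is the number of trailing $0$'s and the body $L(x)$ is empty or ends in $1$; this is the decomposition relevant when $x$ is a left operand. On the \emph{right}, write $x = R(x)\,T(x)$, where $R(x)$ is the longest prefix that is a genuine Dyck path (stays weakly above $0$ and returns to $0$) and $T(x)$ is the remainder; reading $x$ from initial height $0$, the remainder $T(x)$ is empty or begins with the step on which the height first reaches $-1$, so that $R(x)$ always ends at height $0$.

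With this notation the product reads
\[
u \pscolle v = L(u)\,R(v)\,0^{z(u)}\,T(v),
\]
which I interpret geometrically as grafting the maximal Dyck prefix $R(v)$ of $v$ onto the terminal plateau of $u$ (at height $z(u)$) and appending the tail $T(v)$ at ground level; a quick height check ($L(u)$ reaches height $z(u)$ staying $\geq -1$, then $R(v)$ is an excursion staying $\geq z(u)$, then $0^{z(u)}$ descends to $0$, then $T(v)$ stays $\geq -1$) reconfirms that this is again a pseudo-Dyck path.

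The heart of the argument is to compute how the data $(L,z)$ and $(R,T)$ transform under a single product, and then compose. For the left association I need the body and trailing-zero count of $u \pscolle v$; for the right association I need the Dyck-prefix/remainder split of $v \pscolle w$. I would organize everything by one case split, according to whether $v$ dips to height $-1$, i.e.\ whether $T(v)$ is empty.
\begin{itemize}
\item If $v$ is itself a Dyck path ($T(v)$ empty), one finds $u \pscolle v = L(u)L(v)\,0^{z(u)+z(v)}$, so $z(u\pscolle v)=z(u)+z(v)$ and $L(u\pscolle v)=L(u)L(v)$; and $R(v\pscolle w)=L(v)R(w)0^{z(v)}$ with $T(v\pscolle w)=T(w)$.
\item If $v$ dips to $-1$ ($T(v)$ nonempty), then $z(u\pscolle v)=z(v)$ and $L(u\pscolle v)=L(u)R(v)0^{z(u)}L(T(v))$, using that $T(v)=L(T(v))0^{z(v)}$ carries all the trailing zeros of $v$; and $R(v\pscolle w)=R(v)$ with $T(v\pscolle w)=L(T(v))R(w)0^{z(v)}T(w)$, since the first descent to $-1$ of $v\pscolle w$ occurs exactly where it did inside $v$.
\end{itemize}
Substituting these into the product formula, both associations reduce, in the first case, to the word $L(u)L(v)R(w)\,0^{z(u)+z(v)}\,T(w)$, and, in the second, to $L(u)R(v)0^{z(u)}L(T(v))R(w)0^{z(v)}T(w)$; in each case the two sides are visibly identical.

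I expect the main obstacle to be the bookkeeping rather than any conceptual depth: one must keep straight that the middle path $v$ is decomposed from the left when it is a left operand (inside $u \pscolle v$) but from the right when it is a right operand (inside $v \pscolle w$), and one must correctly locate the trailing zeros and the first descent to $-1$ of each intermediate product. The geometric reinterpretation above is what keeps this under control, since it isolates precisely the two facts needed to close both cases: how trailing zeros accumulate in a product, and that a product's first visit to height $-1$ is inherited unchanged from the factor $v$.
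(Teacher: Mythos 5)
Your proof is correct and takes essentially the same route as the paper: both decompose the middle factor $v$ in the two complementary ways and split into the same two cases (whether $v$ dips to height $-1$, i.e.\ whether $(1,v,0)$ is block-indecomposable), arriving at the same explicit words for the triple product in each case. Your version merely spells out the bookkeeping of how the data $(L,z)$ and $(R,T)$ transform under a single product, which the paper's proof leaves implicit.
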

\begin{proof}
  Consider three pseudo-Dyck paths $u$, $v$, $w$. Write $u = (u', 0^k)$,
  $w= w' w''$ and decompose also $v$ in both ways. One has to
  distinguish two cases, depending on whether $(1,v,0)$ is
  block-indecomposable or not. If it is, then $v = (v', 0^\ell)$ where $v'$ ends
  by the letter $1$. The largest prefix of positive height is $v$
  itself. In this case, the two ways to compute the product of $u,v,w$
  give $(u', v', w', 0^{\ell+k}, w'')$.

  If $(1,v,0)$ is not block-indecomposable, then $v = (v', v'', 0^\ell)$,
  where $v''$ ends with $1$ and $v'$ is the largest prefix of positive height.
  In this case, the two ways to compute the product of $u,v,w$
  give $(u', v', 0^k, v'', w', 0^{\ell}, w'')$. 
\end{proof}

This defines a monoid $\mono$ on the set of pseudo-Dyck paths. This
monoid is graded by the size, which is the number of letters $1$. Note
that the empty pseudo-Dyck path is the unit of this monoid.

\begin{lemma}
  \label{concat_colle}
  Let $u$ and $v$ be two Dyck paths. Then the product
  $\ov{u} \pscolle (0,1) \pscolle \ov{v}$ is the pseudo-Dyck path
  $\ov{u v}$ associated to the concatenation of $u$ and $v$.
\end{lemma}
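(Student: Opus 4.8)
The plan is to reduce everything to a direct computation with the product $\pscolle$, using the associativity established just above to pick a convenient bracketing. First I would record the elementary fact that every non-empty Dyck path begins with the letter $1$ and ends with the letter $0$, since it starts and ends on the horizontal line while staying weakly above it. Hence $u = (1, \ov{u}, 0)$ and $v = (1, \ov{v}, 0)$, so that the concatenation is $uv = (1, \ov{u}, 0, 1, \ov{v}, 0)$ and therefore, removing the first and last letters,
\begin{equation*}
  \ov{uv} = (\ov{u}, 0, 1, \ov{v}).
\end{equation*}
This is exactly the word I want to recognize as the output of the triple product.

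Next, writing $a = \ov{u}$ and $b = \ov{v}$ for brevity, I would compute the triple product in the bracketing $(a \pscolle (0,1)) \pscolle b$, which is legitimate by the associativity lemma. For the inner product $a \pscolle (0,1)$, I factor $a = (a', 0^k)$ by peeling off its trailing zeros, and I factor the right-hand path $(0,1) = v' v''$ according to the definition: since its very first step already drops the height to $-1$, the largest prefix of nonnegative height is the empty word, so $v'$ is empty and $v'' = (0,1)$. The product formula then gives
\begin{equation*}
  a \pscolle (0,1) = (a', v', 0^k, v'') = (a', 0^k, 0, 1) = (a, 0, 1),
\end{equation*}
using that $(a', 0^k)$ is just $a$ read back as a word.

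Finally I would multiply on the right by $b$. The left factor $(a,0,1)$ ends with the letter $1$, so its trailing-zero part is empty; writing $b = b' b''$ as in the definition of the product, the formula yields $(a,0,1) \pscolle b = (a, 0, 1, b', b'') = (a, 0, 1, b)$, which coincides with $\ov{uv}$ as computed above. I do not expect a genuine obstacle here; the only points that call for a little care are the boundary case where $\ov{u}$ or $\ov{v}$ is empty, which the same formulas handle uniformly (and which one checks directly against the definition and the worked example $(1,0)\pscolle(0,1)=(1,0,0,1)$), and the correct identification of the trivial nonnegative-prefix decomposition of $(0,1)$. It is precisely this triviality that prevents the right factor from being pulled in front of the trailing zeros, so that the letters $0$ and $1$ coming from the final step of $u$ and the initial step of $v$ remain exactly where the concatenation places them.
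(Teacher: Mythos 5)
Your proof is correct and follows the same route as the paper: the paper's proof simply asserts that a direct check of the definition of $\pscolle$ shows $\ov{u} \pscolle (0,1) \pscolle \ov{v} = (\ov{u},0,1,\ov{v}) = \ov{uv}$, and your argument is exactly that check carried out in detail (correctly identifying the trivial nonnegative-height prefix of $(0,1)$ and the empty trailing-zero block of $(\ov{u},0,1)$).
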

\begin{proof}
  Using the definition of $\pscolle$, one can check that the pseudo-Dyck path
  $\ov{u} \pscolle (0,1) \pscolle \ov{v}$ is just the concatenation
  $(\ov{u},0,1,\ov{v})$. Hence the associated Dyck word is $(1,\ov{u},0,1,\ov{v},0)$, which is $u v$.
\end{proof}

\begin{lemma}
  \label{level_colle}
  Let $w$ be a block-indecomposable Dyck path with level-decomposition
  $w=\level(w_1,\dots,w_k)$. Then the pseudo-Dyck path $\ov{w}$ is 
  \begin{equation}
    \ov{w} = (w_1,1,0) \pscolle \dots \pscolle (w_k,1,0).
  \end{equation}
\end{lemma}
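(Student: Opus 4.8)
The plan is to compute both sides explicitly and match them by induction on the number of factors. First I would invoke \cref{level_decomposition} to write
\[
w = (1, w_1, 1, w_2, \dots, 1, w_k, 1, 0^{k+1}),
\]
and then strip the initial letter $1$ and one final letter $0$ to obtain the explicit shape
\[
\ov{w} = (w_1, 1, w_2, 1, \dots, w_k, 1, 0^k).
\]
This reduces the lemma to showing that the claimed product on the right-hand side equals this word.

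The key observation is that each factor $(w_i,1,0)$ is a genuine Dyck path: since $w_i$ is a Dyck path its height stays nonnegative and returns to $0$, after which the letters $1,0$ rise to height $1$ and come back to $0$. Consequently, whenever $(w_i,1,0)$ occurs as the right argument $v$ of a product $u \pscolle v$, its decomposition $v = v'v''$ into the largest prefix of nonnegative height and the rest has $v' = v$ and $v''$ empty. The defining rule for $\pscolle$ then collapses to $u \pscolle (w_i,1,0) = (u', (w_i,1,0), 0^{k'})$ where $u = (u',0^{k'})$; informally, the factor is inserted immediately before the trailing block of zeros of $u$, and that trailing run grows by exactly one because of the final $0$ of the inserted factor.

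Using the associativity of $\pscolle$ established in the preceding lemma, I would then compute the product from the left and prove by induction on $j$ that
\[
(w_1,1,0) \pscolle \dots \pscolle (w_j,1,0) = (w_1, 1, w_2, 1, \dots, w_j, 1, 0^j).
\]
The base case $j = 1$ is immediate. For the inductive step, the partial product ends in a run of exactly $j$ zeros preceded by the letter $1$, so its decomposition $(u',0^j)$ has $u' = (w_1,1,\dots,w_j,1)$; inserting $(w_{j+1},1,0)$ just before these zeros gives $(w_1,1,\dots,w_j,1,w_{j+1},1,0,0^j)$, which is $(w_1,1,\dots,w_{j+1},1,0^{j+1})$, as required. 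Taking $j = k$ yields exactly $\ov{w}$.

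There is no genuine obstacle here beyond careful bookkeeping. The only two facts that really need checking are that each $(w_i,1,0)$ has nonnegative height throughout (so that the $v''$ component is always empty and the product rule simplifies to insertion before the trailing zeros), and that the maximal trailing run of zeros in the partial product is exactly $j$ at each stage (guaranteed by the letter $1$ sitting immediately before it). Both are immediate from the shapes involved.
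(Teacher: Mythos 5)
Your proof is correct and follows essentially the same route as the paper: an induction on the number of level-decomposition factors, appending one factor $(w_i,1,0)$ at a time and using that this factor never dips below height $0$, so the $v''$ component in the product rule is empty and the factor is simply inserted before the trailing run of zeros. Your version is just a more explicit, word-level rendering of the paper's inductive step (you might also note the trivial $k=0$ case, where both sides are the empty pseudo-Dyck path).
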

\begin{proof}
  By induction on $k$. This is true if $k=0$ for the empty
  level-decomposition of $(1,0)$ and the empty product. If $k=1$, the
  product has only one factor and the statement is also
  true. Otherwise, let $w'$ be the block-indecomposable Dyck path with
  level-decomposition $w'=\level(w_1,\dots,w_{k-1})$. Then one knows
  that $\ov{w'} = (w_1,1,0) \pscolle \dots \pscolle
  (w_{k-1},1,0)$. Computing the $\pscolle$ product of this with
  $(w_k,1,0)$, one gets exactly $\ov{w}$ on the left hand side and the
  expected product on the right hand side.
\end{proof}
This lemma also implies that the product $\ov{u} \pscolle \ov{v}$ for
two block-indecomposable Dyck paths $u$ and $v$ is $\ov{w}$, where $w$
is the block-indecomposable Dyck path whose level-decomposition is the
concatenation of the level-decompositions of $u$ and $v$.

\begin{lemma}
  \label{gen_M1}
  The monoid $\mono$ is generated by the elements $(w,1,0)$ where $w$
  runs over the set of Dyck paths, plus the element $(0,1)$.
\end{lemma}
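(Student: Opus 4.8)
The plan is to show that an arbitrary pseudo-Dyck path lies in the submonoid generated by the listed elements, by first recovering it from an honest Dyck path and then splitting that path along its two natural decompositions. Since every pseudo-Dyck path is $\ov{W}$ for some non-empty Dyck path $W$, it suffices to prove that each such $\ov{W}$ is a $\pscolle$-product of elements $(w,1,0)$ and copies of $(0,1)$. The empty pseudo-Dyck path is the unit of $\mono$ and equals the empty product, so I may as well assume $W$ is nontrivial.

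First I would handle the decomposition of $W$ into blocks. Writing $W = B_1 B_2 \cdots B_m$ with each $B_j$ block-indecomposable, I would apply \cref{concat_colle} repeatedly (using associativity of $\pscolle$ and induction on $m$) to obtain
\begin{equation*}
  \ov{W} = \ov{B_1} \pscolle (0,1) \pscolle \ov{B_2} \pscolle (0,1) \pscolle \cdots \pscolle (0,1) \pscolle \ov{B_m}.
\end{equation*}
This reduces the claim to the case of a single block, at the cost of inserting the generator $(0,1)$ between consecutive blocks.

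It then remains to treat a single block-indecomposable Dyck path $B$. Taking its level-decomposition $B = \level(w_1, \dots, w_k)$, \cref{level_colle} gives directly
\begin{equation*}
  \ov{B} = (w_1,1,0) \pscolle \cdots \pscolle (w_k,1,0),
\end{equation*}
which is a product of generators of the first kind (and the empty product when $k=0$, that is, when $B = (1,0)$ and $\ov{B}$ is empty). Substituting these expressions into the block formula above writes $\ov{W}$ entirely in terms of the elements $(w,1,0)$ and $(0,1)$, proving that they generate $\mono$.

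I do not expect a genuine obstacle here: both decompositions and the two preceding lemmas do all the real work, and the argument is essentially an assembly of them. The only points requiring a little care are the degenerate cases—the empty pseudo-Dyck path serving as the unit, and trivial blocks $(1,0)$ contributing an empty factor while still being separated by $(0,1)$—which must be tracked so that the products are interpreted correctly in the graded monoid.
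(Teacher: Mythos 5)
Your argument is correct and is essentially the paper's own proof: both first split $\ov{W}$ along the block decomposition using \cref{concat_colle} (inserting the generator $(0,1)$ between blocks), then expand each block-indecomposable factor via its level-decomposition using \cref{level_colle}. Your extra attention to the degenerate cases (empty path as unit, trivial blocks $(1,0)$) is a minor elaboration, not a different route.
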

\begin{proof}
  By the previous two lemmas, one can first write any pseudo-Dyck path as
  a $\pscolle$ product of some $\ov{w}$ for block-indecomposable Dyck
  paths $w$, with intermediate factors $(0,1)$. Then using the
  level-decomposition of each $w$, one can write $\ov{w}$ as a
  $\pscolle$ product of pseudo-Dyck paths of the shape $(w_i,1,0)$.
\end{proof}

\begin{proposition}
  \label{free_M1}
  The monoid $\mono$ is free on the
  generators $(w,1,0)$ where $w$ runs over the set of Dyck paths, plus
  the element $(0,1)$.
\end{proposition}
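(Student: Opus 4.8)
The plan is to prove freeness by showing that the evaluation homomorphism $\varepsilon\colon S^{*}\to\mono$ from the free monoid on the alphabet $S=\{(0,1)\}\cup\{(w,1,0):w\text{ a Dyck path}\}$, sending a word of generators to its $\pscolle$-product, is an isomorphism. Surjectivity is exactly \cref{gen_M1}, so what remains is injectivity, i.e.\ unique factorization: every pseudo-Dyck path is the $\pscolle$-product of a unique finite sequence of generators. I would obtain this by describing an explicit reading procedure that recovers the generator sequence from the product and by checking that it is a two-sided inverse of $\varepsilon$.

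First I would analyze an arbitrary generator word $g_1\pscolle\cdots\pscolle g_n$ by grouping it at its occurrences of the separator $(0,1)$. If there are $m-1$ such occurrences, then associativity lets me write the product as $\ov{B_1}\pscolle(0,1)\pscolle\cdots\pscolle(0,1)\pscolle\ov{B_m}$, where each $\ov{B_j}$ is the $\pscolle$-product of the maximal run of generators of the form $(w,1,0)$ lying between two consecutive separators (an empty run giving the empty pseudo-Dyck path, i.e.\ $B_j=(1,0)$). By \cref{level_colle}, each such run evaluates to $\ov{B_j}$ with $B_j=\level(w_1^{(j)},\dots,w_{k_j}^{(j)})$ block-indecomposable, the arguments being the Dyck paths carried by that run in order. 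Then, applying \cref{concat_colle} repeatedly together with associativity, the whole product collapses to $\ov{B_1B_2\cdots B_m}$; equivalently the associated Dyck path satisfies $(1,p,0)=B_1B_2\cdots B_m$.

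The crucial step I would then isolate is the observation that, because each $B_j$ is block-indecomposable and nonempty, the factorization $B_1\cdots B_m$ is \emph{precisely} the decomposition into blocks of the Dyck path $(1,p,0)$. This is the source of uniqueness: the block decomposition of a Dyck path is unique, so $m$ and the individual blocks $B_1,\dots,B_m$ are determined by $p$ alone; and for each block the level-decomposition is unique by \cref{level_decomposition}, so the sequence $(w_1^{(j)},\dots,w_{k_j}^{(j)})$, hence the subword of generators producing $\ov{B_j}$, is determined by $B_j$. Reassembling, the sequence $(g_1,\dots,g_n)$ is recovered from $p$, which gives injectivity. Conversely the reading procedure (decompose $(1,p,0)$ into blocks, insert a separator $(0,1)$ at each junction, and expand each block by its level-decomposition) produces a generator word evaluating back to $p$ by the same two lemmas, so $\varepsilon$ is a bijection and $\mono$ is free on $S$.

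I expect the only delicate point to be the bookkeeping that the separators $(0,1)$ sit exactly at the block junctions and never interact with the interior of a block; this is controlled entirely by \cref{concat_colle,level_colle}, but I would take care with the degenerate cases, namely empty runs between separators (corresponding to the block $(1,0)$) and runs of several $(w,1,0)$ generators that rebuild a \emph{single} block rather than several, so that the count of blocks is verified to equal the number of separators plus one.
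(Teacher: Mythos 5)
Your proof is correct, but it takes a genuinely different route from the paper. The paper's proof is a counting argument: since \cref{gen_M1} gives a degree-preserving surjection from the free monoid on the proposed generators onto $\mono$, it suffices to check that the two graded sets have the same (finite) cardinality in each degree, which reduces to the identity $C = 1 + tC^2$ for the Catalan series — a two-line verification, but one that yields no explicit factorization. You instead prove unique factorization directly, by exhibiting an inverse to the evaluation map: separators $(0,1)$ mark block junctions via \cref{concat_colle}, runs of generators $(w,1,0)$ rebuild single block-indecomposable blocks via \cref{level_colle}, and uniqueness then falls out of the uniqueness of the block decomposition together with \cref{level_decomposition}. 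Your bookkeeping is sound — in particular the degenerate cases are handled consistently, since an empty run yields the block $(1,0)$ (level-decomposition of length $0$) while the generator $(1,0,1,0)$ yields the block $(1,1,0,0)$ (level-decomposition of length $1$), so no collision occurs. What your approach buys is an explicit normal form and a recovery algorithm (essentially the content of the paragraph following \cref{level_colle} in the paper, promoted to a proof); what the paper's approach buys is brevity, at the cost of leaving the bijection implicit.
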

\begin{proof}
  Because the monoid is generated by these elements, it is enough to
  compare its generating series with the generating series of the free
  monoid on the same generators. This is a simple computation with the
  usual generating series of Catalan numbers.
\end{proof}

\section{The same monoid seen on Dyck paths}

Let us now go back to Dyck paths. Using the simple bijection
$w \to \ov{w}$ between non-empty Dyck paths and pseudo-Dyck paths, the
monoid structure $\mono$ can be transported to a monoid structure $\monod$ on
the set of non-empty Dyck paths, where the product will be denoted $\colle$.

This means that
\begin{equation}
  u \colle v = (1, \ov{u} \pscolle \ov{v}, 0).
\end{equation}

The unit of $\monod$ becomes $(1,0)$, and the free generators of $\monod$
are the element $(1,0,1,0)$ and all elements of the shape
$(1,w,1,0,0)$ for some Dyck path $w$.

The monoid structure $\monod$ interacts nicely with the partial order
on $\Dyck_n$.

\begin{lemma}
  \label{cover_colle}
  Let $w \in \Dyck_n$ be a Dyck path. Assume that
  $w = w_1 \colle \dots \colle w_m$ as a product of
  generators of the monoid $\monod$. Let $w \to w'$ be a cover move in
  the partial order on $\Dyck_n$. Then the decomposition of $w'$
  as a product of generators has at most $m$ factors. If this
  decomposition has exactly $m$ factors, then there is one cover move
  inside one of the factors, and the other factors are
  unchanged. Otherwise, several
  consecutive factors are merged into a new factor, all other factors being unchanged.
\end{lemma}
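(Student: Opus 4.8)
The plan is to combine the explicit description of the free factorization coming from \cref{level_decomposition,concat_colle,level_colle,gen_M1} with a purely local analysis of the slide. Writing $w=B_1\cdots B_r$ for the decomposition into blocks and $B_t=\level(w^t_1,\dots,w^t_{k_t})$ for the level-decomposition of each block, the generators of $w$ are, read from left to right, the pieces $(1,w^t_j,1,0,0)$ coming from the successive blocks, with one copy of the separator $(1,0,1,0)$ inserted between two consecutive blocks; in particular $m=(r-1)+\sum_t k_t$. Since a cover move rewrites $w$ only locally — it slides a single movable subpath $x$ across $i$ of the $N$ zeros immediately preceding it — the statement will follow once I understand how this rewriting changes the block/level data, after which \cref{free_M1} (uniqueness of the factorization) determines the factors of $w'$.

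First I would treat the case where $x$ sits at a height $h>0$. Because $x$ stays strictly above $h$ in its interior, and distinct level-pieces sit at distinct base heights separated by points where the block returns to those heights, the subpath $x$, the run $0^N$ before it and its landing position after the slide are all confined to a single level-piece $u$, that is, to the interior Dyck path of one generator $(1,u,1,0,0)$. The move then restricts to a cover move $u\to u'$, the rest of the block/level data being untouched, so the factorization of $w'$ is that of $w$ with the single replacement $(1,u,1,0,0)\to(1,u',1,0,0)$. This is the first alternative, with $m$ unchanged.

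It remains to handle $h=0$, where $x=B_j$ is a whole block. As $x$ is preceded by $0^N$ it is not the first block, and the $N$ zeros are exactly the trailing zeros of $B_{j-1}$, so $B_{j-1}=\level(w^{j-1}_1,\dots,w^{j-1}_{N-1})$. Sliding $x$ up by $i$ fuses $B_{j-1}$ and $B_j$ into the single block $B'=(1,w^{j-1}_1,1,\dots,1,w^{j-1}_{N-1},1,0^{N-i},x,0^{i})$, and the heart of the proof is the level-decomposition of $B'$. I expect to prove it equals $(w^{j-1}_1,\dots,w^{j-1}_{i-1},C,x_1,\dots,x_{p-1})$, where $p$ is the number of trailing zeros of $x$ (so $x=\level(x_1,\dots,x_{p-1})$) and $C=(w^{j-1}_i,1,\dots,1,w^{j-1}_{N-1},1,0^{N-i})$ is one new piece gathering the top level-pieces of $B_{j-1}$ together with the junction. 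Its length $p+i-1$ replaces the $(N-1)+(p-1)$ old pieces plus one separator, so the number of factors changes by exactly $i-N\le 0$. For $i=N$ the piece $C$ degenerates to the empty path: nothing merges, and the only effect is that the separator $(1,0,1,0)$ is replaced by $(1,1,0,0)$, which is precisely the cover relation $(1,0,1,0)\to(1,1,0,0)$ inside that factor — the first alternative again. For $i<N$ the factors $w^{j-1}_i,\dots,w^{j-1}_{N-1}$ together with the separator, that is $N-i+1$ consecutive generators, fuse into the single factor $C$ while all the others survive unchanged — the second alternative, with the count dropping by $N-i$.

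The main obstacle is exactly the level-decomposition of $B'$: because forming the product moves the trailing zeros of one factor past the body of the next, as in the definition of $\pscolle$, the generators are not contiguous subwords and these identities cannot be read off the word directly. I would establish them by running the recursive peeling of \cref{level_decomposition} on $B'$ from the top down, checking that the first $p-1$ peelings recover $x_{p-1},\dots,x_1$, that the next peeling sweeps $w^{j-1}_i,\dots,w^{j-1}_{N-1}$ and the partial descent $0^{N-i}$ into the single hanging piece $C$, and that the remaining peelings return $w^{j-1}_{i-1},\dots,w^{j-1}_1$; the disappearance of the separator is accounted for by the empty piece created at the old block boundary when $i=N$. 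A minor additional point is to confirm, in the case $h>0$, that $x$ remains movable with the same value of $N$ once $u$ is embedded at height one inside $(1,u,1,0,0)$, so that $u\to u'$ genuinely lifts to a single cover relation of $w$.
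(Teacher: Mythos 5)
Your proof is correct and follows essentially the same route as the paper's: the same two-case split (sliding a whole block at height $0$ versus a move confined to one level-piece at height $h>0$), and in the merging case the same identification of the new level-decomposition as $(w^{j-1}_1,\dots,w^{j-1}_{i-1})$, one fused piece, then the pieces of $x$, with the degenerate $i=N$ case giving the cover move $(1,0,1,0)\to(1,1,0,0)$ inside a single factor. You simply make explicit (via the peeling algorithm and the formula for $C$) what the paper asserts in one sentence.
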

\begin{proof}
  (A) Assume first that the cover move is sliding one full block of $w$,
  and therefore merging two consecutive blocks $v$ and $v'$ of
  $w$. One has to understand the level-decomposition of this new block
  in terms of those of $v=\level(v_1,\dots,v_k)$ and $v'$. This
  depends on the height $1 \leq i \leq k+1$ of the block $v'$ after it
  has been slided.

  Assume first that $1 \leq i \leq k$. Then the new level decomposition
  will be made of $v_1,\dots,v_{i-1}$, then a new term coming from the
  fusion of $v_{i},\dots,v_k$, then the level decomposition of
  $v'$. There are strictly less factors in $w'$ than in
  $w$, because the factor $(1,0,1,0)$ separating the two blocks disappears.

  There remains the case when $i = k + 1$. Then the new level
  decomposition is $v_1,\dots,v_k$, followed by the empty Dyck path, then by the
  level decomposition of $v'$. Therefore the number of factors stays the
  same with just one change, the factor $(1,0,1,0)$ being replaced by the
  factor $(1,1,0,0)$. This corresponds to a cover move inside this factor.

  (B) Assume now that the cover move is happening inside one block $v$ of
  $w$. Let $v = \level(v_1,\dots,v_k)$ be the level-decomposition of
  $v$. Then the cover move can only happen inside some $v_i$ as a cover move
  $v_i \to v'_i$. Therefore in the monoid $\monod$, one gets $w'$ from
  $w$ by replacing the factor $(1,v_i,1,0,0)$ by
  $(1,v'_i,1,0,0)$.
\end{proof}

This means that performing a cover move in a $\colle$ product can
either preserve the product or merge terms of the product, but can never split
them.

Let us now give two lemmas that will be used in the next section to
define a product $\colle$ on the set of all intervals.

\begin{lemma}
  \label{factor_cover_left}
  Let $w \to w'$ be a cover relation of non-empty Dyck paths. Let $u$
  be a non-empty Dyck path. Then there is a cover relation from 
  $u \colle w$ to $u \colle w'$.
\end{lemma}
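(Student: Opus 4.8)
The plan is to make the product $u \colle w$ completely explicit as a ``grafting'' of $w$ onto $u$, and then to transport the cover move $w \to w'$ through this grafting. Write $\ov u = (a, 0^k)$, where $a$ is empty or ends in the letter $1$, and let $B_1 B_2 \cdots B_t$ be the block decomposition of $w$. From the definition of $\pscolle$ I would first establish the grafting formula
\[
u \colle w = (1, a, \ov{B_1}, 0^{k+1})\, B_2 B_3 \cdots B_t ,
\]
which holds once one observes that the maximal prefix of $\ov w$ of nonnegative height is precisely $\ov{B_1}$ (since $B_1$ is block-indecomposable), so that $\ov u \pscolle \ov w = (a, \ov{B_1}, 0^{k}, b'')$ with $b''$ the remainder. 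In words: the blocks $B_2, \ldots, B_t$ are appended unchanged, while the first block is opened up into its inner word $\ov{B_1}$ and inserted into the last block of $u$ just before its trailing $0^{k+1}$. The one genuinely new feature, compared with $w$, is that if $B_1$ ends with the run $0^m$, then in $u \colle w$ the block $B_2$ is preceded by $0^{m+k}$ rather than by $0^m$.

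Next I would recall that a cover move $w \to w'$ is some $M(w,x,i)$, sliding a movable subpath $x$ north-west past $i$ of the $0$'s preceding it, and classify $x$ by its height. A movable $x$ either sits at height $0$, in which case it is a block $B_j$ with $2 \le j \le t$ and the move merges $B_j$ into $B_{j-1}$, or sits at height $\ge 1$, in which case it lies strictly inside a unique block $B_j$ together with its preceding $0$'s and following $1$. The decisive dichotomy is whether the \emph{action region} of the move — the subpath $x$, the $0$'s it slides through, and the letter following it — survives unchanged in $u \colle w$.

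In every case except $x = B_2$ the action region is contained either in one of the intact blocks $B_2, \ldots, B_t$, or in $\ov{B_1}$, which appears verbatim inside $u \colle w$; in all these cases $x$ is still movable in $u \colle w$ with the same value of $N$, the identical move $M(\cdot,x,i)$ is legal, and comparing the grafting formula applied to $w$ and to $w'$ (their block decompositions agreeing outside the affected block) shows it produces exactly $u \colle w'$. The remaining case $x = B_2$ is where I expect the real work, because the $0$'s that $x$ slides through straddle the junction between $u$ and $w$: since $B_2$ is preceded by $0^{m+k}$ in $u \colle w$, sliding it past the same $i$ zeros would land it one level too low, and instead one must perform $M(u \colle w, B_2, i+k)$, which is legal as $1 \le i+k \le m+k = N(u \colle w, B_2)$. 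Writing $B_1 = (\gamma, 0^m)$, forming $w' = M(w, B_2, i) = (\gamma, 0^{m-i}, B_2, 0^i)\,B_3 \cdots B_t$, and comparing $u \colle w'$ with $M(u \colle w, B_2, i+k)$ via the grafting formula, one checks the two words coincide, the extra $0^k$ coming from $u$ reappearing as trailing zeros of the slid block. This index shift by $k$ at the junction is the subtle point; everywhere else the cover move transports verbatim.
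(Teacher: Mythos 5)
Your proof is correct and follows essentially the same route as the paper's: both isolate the first block of $w$ (the paper's $w_1$, the longest prefix of $\ov{w}$ of nonnegative height, is exactly your $\ov{B_1}$), note that every cover move other than the merge of $B_2$ into $B_1$ transports verbatim into $u \colle w$, and treat that remaining case by sliding $B_2$ through the lengthened run of zeros at the junction. The paper is terser at that last step (``one can slide $x$ to get the wanted cover move''), but the index shift $i \mapsto i+k$ you spell out is precisely what it has in mind.
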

\begin{proof}
  One can assume that $u$ is not $(1,0)$, because it is the unit of $\monod$.
  Let $\ov{u} = (u', 0^k)$ be the decomposition of $\ov{u}$ according
  to its final sequence of $0$. Let $\ov{w} = w_1 w_2$ and
  $\ov{w'} = w'_1 w'_2$ be the decompositions whose first term is the
  longest prefix before going below the horizontal axis. Then $\ov{u} \pscolle \ov{w}$ and $\ov{u} \pscolle \ov{w'}$ are
  $(u',w_1,0^k,w_2)$ and $(u',w'_1,0^k,w'_2)$. If either $w_1 = w'_1$
  or $w_2 = w'_2$, then the statement is clear, as the cover move is
  inside either $w_1$ or $w_2$. Otherwise, the cover move must merge
  the first two blocks of $w$ by sliding the second block.  One can
  then write $w = (1,w_1,0,x,w_3)$ and $w' = (1, y, 0, w_3)$ where $x$
  is the slided subpath, $y$ is the result of sliding $x$ on $w_1$ and
  $w_3$ may be empty. Then the products $\ov{u}\pscolle\ov{w}$ and $\ov{u}\pscolle\ov{w'}$ are (when
  lifted back to Dyck paths) equal to $(1,u',w_1,0^{k+1},x,w_3)$ and
  $(1,u',y,0^{k+1},w_3)$. One can slide $x$ to get the wanted cover
  move.
\end{proof}

\begin{lemma}
  \label{factor_cover_right}
  Let $w \to w'$ be a cover relation of non-empty Dyck paths. Let $u$
  be a non-empty Dyck path. Then there is a cover relation from 
  $w \colle u$  to $w' \colle u$.
\end{lemma}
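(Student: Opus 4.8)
The plan is to mirror the proof of \cref{factor_cover_left}, the only change being that the fixed factor $u$ now sits on the right, so the cover move happens in the left factor. First I would record the explicit shape of the product. We may assume $w \neq (1,0)$, the unit. Write $\ov{w} = (a, 0^k)$ with $a$ ending in $1$, and $\ov{u} = (b, c)$ where $b$ is the largest prefix of $\ov{u}$ of nonnegative height (so $b$ ends at height $0$ and, if $c$ is nonempty, $c$ begins with a down-step to height $-1$). By the definition of $\pscolle$ this gives
\[
w \colle u = (1, a, b, 0^k, c, 0).
\]
Comparing with $w = (1, a, 0^{k+1})$, the initial segment $(1,a)$ is reproduced verbatim, and the factor $u$ is inserted only inside the final run of $0$'s of $w$.

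Next I would classify the movable subpath $x$ whose slide realizes $w \to w'$. By the definition of movability there are exactly two cases: either $x$ is followed by the letter $1$, or $x$ is the last block of $w$ (which forces $w$ to have at least two blocks). In the first case, since all $1$'s of $w$ lie in $(1,a)$, both $x$ and the run of $0$'s preceding it sit strictly inside the common segment $(1,a)$. The same subpath is then movable in $w \colle u$, the trailing-zero count $k$ and the decomposition $(b,c)$ are untouched, and sliding $x$ by the same amount $i$ turns $w \colle u$ into $w' \colle u$. This case is routine.

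The second case is the heart of the argument. Writing $w = (Q, 0^N, x)$ with $Q$ ending in $1$ and $x = (1, y, 0^m)$ the last block, the slide gives $w' = (Q, 0^{N-i}, x, 0^i)$, and a short computation of the two $\colle$-products (using $k = m-1$) yields
\[
w \colle u = (Q, 0^N, 1, y, b, 0^{m-1}, c, 0), \qquad w' \colle u = (Q, 0^{N-i}, 1, y, b, 0^{m+i-1}, c, 0).
\]
I would then exhibit the single subpath $T = (1, y, b, 0^{m})$, whose final $0$ is the first letter of $c$ (or the final $0$ of $w \colle u$ when $c$ is empty, i.e.\ when $u$ is block-indecomposable). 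One checks that $T$ returns to its starting height $0$ and stays strictly above $0$ in between, so it is a genuine subpath preceded by the run $0^N$; and sliding $T$ over $i$ of these zeros produces exactly $w' \colle u$, via the rearrangement $0^N,\,T,\,\dots \mapsto 0^{N-i},\,T,\,0^i,\,\dots$, together with $1 \le i \le N = N(w\colle u, T)$.

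The main obstacle is verifying that $T$ is \emph{movable}. The key observation is that $T$ ends at height $0$: since $w \colle u$ is a Dyck path it cannot descend below $0$, so the letter immediately following $T$ (the second letter of $c$, when it is present) is forced to be $1$, whence $T$ is movable by the second clause of the definition. When $c$ is empty, $T$ is simply the last block of $w \colle u$ and is movable by the first clause. In either case $M(w \colle u, T, i) = w' \colle u$ is an edge of $\Gamma_n$, i.e.\ a cover relation, which is exactly what is claimed.
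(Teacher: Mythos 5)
Your proof is correct and follows essentially the same route as the paper's: the same decompositions $\ov{w}=(a,0^k)$ and $\ov{u}=(b,c)$, the same dichotomy between a slide landing before a letter $1$ (hence confined to the common prefix $(1,a)$) and a slide of the last block, and in the latter case the identical slided subpath $(1,y,b,0^m)$ absorbing the first letter of $c$ or the final $0$. The only difference is that you spell out the movability check (the forced letter $1$ after $T$, by the height argument), which the paper leaves implicit.
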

\begin{proof}
  One can assume that $u$ is not $(1,0)$, because it is the unit of
  $\monod$. Let $\ov{u} = u' u''$ be the decomposition whose first
  term is the longest prefix before going below the horizontal
  axis. Let $\ov{w} = (v, 0^{k})$ and $w' = (v', 0^{k'})$ be the
  decompositions according to the final sequences of $0$. Then
  $\ov{w}\pscolle\ov{u}$ and $\ov{w'}\pscolle\ov{u}$ are
  $(v, u', 0^k, u'')$ and $(v', u', 0^{k'}, u'')$. If $k = k'$, then
  the statement is clear, as the cover move is inside $v$. Otherwise,
  the cover move must merge the last two blocks of $w$ by sliding the
  last block. One can then write
  $w = (w_1, w_2, 0^{i}, w_3, 0^{k+1})$ and
  $w' = (w_1, w_2, w_3, 0^{i+k+1})$ for some $i \geq 1$, where $w_1$
  may be empty and $(w_3,0^{k+1})$ is the slided subpath. Then the
  right products $\ov{w}\pscolle\ov{u}$ and $\ov{w'}\pscolle\ov{u}$ are (when lifted back to Dyck paths)
  equal to $(w_1, w_2, 0^{i}, w_3, u', 0^{k}, u'', 0)$ and
  $(w_1, w_2, w_3, u', 0^{i+k}, u'', 0)$. Then the expected cover move
  is given by sliding $(w_3, u', 0^{k+1})$, using also the starting
  $0$ of $u''$ or the final $0$ if $u''$ is empty.
\end{proof}

\section{A monoid on intervals}

In this section, we will define and study another monoid $\monoi$, on the set of all
intervals in the posets $\Dyck_n$ for $n \geq 1$.

\smallskip

Let $[w_1,w'_1]$ in $\Dyck_m$ and $[w_2,w'_2]$ in $\Dyck_n$ be two
such intervals.

\begin{lemma}
  In $\Dyck_{m+n-1}$, one has $w_1 \colle w_2 \leq w'_1 \colle w'_2$.
\end{lemma}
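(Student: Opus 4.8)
The plan is to prove monotonicity of the product $\colle$ in each argument separately and then combine the two one-sided inequalities by transitivity, inserting the intermediate element $w'_1 \colle w_2$. The two preceding lemmas, \cref{factor_cover_left} and \cref{factor_cover_right}, are tailored precisely for this: they transport a single cover relation through a product in which the other factor is held fixed. Since $w_1 \leq w'_1$ in $\Dyck_m$ and $w_2 \leq w'_2$ in $\Dyck_n$ with $m, n \geq 1$, all four paths are non-empty Dyck paths, so the products are defined and, by the grading of $\monod$ by the number of letters $1$, all intermediate products land in $\Dyck_{m+n-1}$.

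\textbf{Key steps.} First I would fix the right factor at $w_2$ and let the left factor climb from $w_1$ to $w'_1$. Choosing a saturated chain of cover relations $w_1 = a_0 \to a_1 \to \dots \to a_p = w'_1$ in $\Dyck_m$ (which exists because $w_1 \leq w'_1$), an application of \cref{factor_cover_right} with $u = w_2$ turns each cover $a_j \to a_{j+1}$ into a cover $a_j \colle w_2 \to a_{j+1} \colle w_2$. Concatenating these covers gives
\begin{equation*}
  w_1 \colle w_2 \leq w'_1 \colle w_2 .
\end{equation*}
Next I would fix the left factor at $w'_1$ and let the right factor climb from $w_2$ to $w'_2$. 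Taking a saturated chain $w_2 = b_0 \to \dots \to b_q = w'_2$ in $\Dyck_n$ and applying \cref{factor_cover_left} with $u = w'_1$ to each cover $b_j \to b_{j+1}$ yields covers $w'_1 \colle b_j \to w'_1 \colle b_{j+1}$, hence
\begin{equation*}
  w'_1 \colle w_2 \leq w'_1 \colle w'_2 .
\end{equation*}
Combining the two displays by transitivity of $\leq$ gives the desired inequality $w_1 \colle w_2 \leq w'_1 \colle w'_2$.

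\textbf{Main obstacle.} There is essentially no genuine difficulty left once the two factor-wise lemmas are in hand; the only thing to verify is bookkeeping, namely that the chains of covers may be concatenated and that the lemmas' hypothesis of non-emptiness is met at every step (which it is, since every element of some $\Dyck_k$ with $k \geq 1$ is a non-empty Dyck path). The real content has already been discharged in the proofs of \cref{factor_cover_left} and \cref{factor_cover_right}, where the explicit sliding of a subpath realizing the cover move through the product was carried out; here I would merely invoke them and chain the resulting covers.
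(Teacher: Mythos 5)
Your proposal is correct and matches the paper's argument: both use \cref{factor_cover_left} and \cref{factor_cover_right} to lift chosen chains of cover moves in each factor and then concatenate, the only (immaterial) difference being that you pass through the intermediate element $w'_1 \colle w_2$ whereas the paper passes through $w_1 \colle w'_2$.
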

\begin{proof}
  Let us pick any sequence of cover moves from $w_1$ to $w'_1$ and any
  sequence of cover moves from $w_2$ to $w'_2$.  Using
  \cref{factor_cover_left} and the chosen cover moves from $w_2$ to
  $w'_2$, one gets a sequence of cover moves from $w_1 \colle w_2$ to
  $w_1 \colle w'_2$. Using then \cref{factor_cover_right} and the
  chosen cover moves from $w_1$ to $w'_1$, one gets a sequence of
  cover moves from $w_1 \colle w'_2$ to $w'_1 \colle w'_2$.
\end{proof}

Let us therefore define a product $\colle$ on the set of all intervals
(except the unique interval in $\Dyck_0$) by the formula
\begin{equation}
  [w_1, w'_1] \colle [w_2, w'_2] = [w_1 \colle w_2, w'_1 \colle w'_2].
\end{equation}
This defines a monoid $\monoi$ on this set of intervals. This monoid
is graded by the size minus $1$. The unit is the unique interval in
$\Dyck_1$.

\begin{proposition}
  The monoid $\monoi$ is free with generators all
  intervals with maximal elements of the shape $(1,w,1,0,0)$ plus
  the interval $[(1,0,1,0),(1,0,1,0)]$ in $\Dyck_2$.
\end{proposition}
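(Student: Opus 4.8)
The plan is to prove freeness \emph{directly}, by showing that every interval factors uniquely as a $\colle$-product of the listed generators, rather than by a generating-series comparison as in \cref{free_M1}: the enumeration of intervals is precisely one of the later goals of the paper, so a series argument would be circular here. First I would reinterpret the generating set. The free generators of $\monod$ are $(1,0,1,0)$ and the paths $(1,w,1,0,0)$; since $(1,0,1,0)=\wmin$ is the bottom of $\Dyck_2$, the only interval with top $(1,0,1,0)$ is $[(1,0,1,0),(1,0,1,0)]$. Hence the listed generators are exactly the intervals $[a,b]$ whose top element $b$ is a free generator of $\monod$. So the claim becomes: every interval $[a,b]$ has a unique expression $[a,b]=[a_1,g_1]\colle\dots\colle[a_m,g_m]$ with each $g_i$ a $\monod$-generator and $a_i\sm g_i$.

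Next I would pin down the top components. Write $b=g_1\colle\dots\colle g_m$ for the unique factorization of $b$ into $\monod$-generators (\cref{free_M1}). Any factorization of $[a,b]$ into $\monoi$-generators produces, by reading off top elements, a factorization $b=g'_1\colle\dots\colle g'_r$ into $\monod$-generators; by freeness of $\monod$ it must have $r=m$ and $g'_i=g_i$. In particular the sizes $n_i:=|g_i|$, together with the requirement $a_i\in\Dyck_{n_i}$ forced by $a_i\sm g_i$, are determined by $b$ alone.

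Then comes the existence half, which also shows the generators generate $\monoi$. Here I would take a saturated chain $a=c_0\to c_1\to\dots\to c_\ell=b$ and track the $\monod$-factorizations along it using \cref{cover_colle}: each upward cover move either performs an internal cover move inside a single factor or merges several consecutive factors, and never splits a factor. Reading the chain from $b$ downward therefore assigns to each top factor $g_i$ a contiguous block of factors of $a$; letting $a_i$ be the $\colle$-product of that block gives $a=a_1\colle\dots\colle a_m$, and the cover moves localized to the $i$-th block (via \cref{factor_cover_left,factor_cover_right}) exhibit a chain from $a_i$ up to $g_i$, so $a_i\sm g_i$. Thus $[a,b]=[a_1,g_1]\colle\dots\colle[a_m,g_m]$ is a product of generators.

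Finally uniqueness is the routine part. If $a=a_1\colle\dots\colle a_m=a'_1\colle\dots\colle a'_m$ with $a_i,a'_i\sm g_i$, then in both factorizations the $i$-th factor has the prescribed size $n_i$. Since $\monod$ is free, the unique factorization of $a$ into $\monod$-generators can be grouped into $m$ consecutive blocks in at most one way once the block sizes are fixed (the cut points are forced by the cumulative sizes), so $a_i=a'_i$ for all $i$. I expect the existence/bookkeeping step to be the real obstacle: making precise the ``merge tree'' recording how each $g_i$ arises from a contiguous block of factors of $a$ along the chain, and checking that the restriction of the chain to that block is genuinely a chain in $\Dyck_{n_i}$ from $a_i$ to $g_i$; by contrast, pinning the top parts and the uniqueness argument are immediate consequences of the freeness of $\monod$.
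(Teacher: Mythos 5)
Your proposal is correct and follows essentially the same route as the paper: decompose the maximum via the freeness of $\monod$, use \cref{cover_colle} along a chain to induce a compatible block decomposition of the minimum (giving generation), and recover the factors uniquely from the decomposition of the maximum (giving freeness). Your extra care in spelling out the ``merge tree'' bookkeeping, and your observation that a generating-series argument would be circular here, are both consistent with what the paper actually does --- its proof of this proposition is the direct one, not a series comparison.
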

Note that the generators of $\monoi$ are exactly the intervals whose
maximal element is a generator of the monoid $\monod$.

\begin{proof}
  Take any interval $[w,w']$. Let $w' = w'_1 \colle \dots \colle w'_k$
  be the unique expression of $w'$ as a product of generators in the
  monoid $\monod$. By \cref{cover_colle}, the minimum $w$ can be
  expressed as $w_1 \colle \dots \colle w_k$ where $w_i \leq w'_i$ for
  every $i$ and the $w_i$ need not be generators in $\monod$. This
  implies that the interval $[w,w']$ can be written as the product
  \begin{equation*}
    [w, w'] = [w_1, w'_1] \colle \dots \colle [w_k, w'_k].
  \end{equation*}
  Therefore the monoid $\monoi$ is indeed generated by the proposed generators.

  Conversely, given a product of some generators, one can recover uniquely the
  generator factors by the same procedure of decomposition of the maximum in
  $\monod$. Therefore the monoid $\monoi$ is free.
\end{proof}

The monoid $\monoi$ interacts nicely with the partial order.
\begin{theorem}
  \label{factoriser}
  The interval $I = [w_1, w'_1] \colle [w_2, w'_2]$ is isomorphic as a poset to the cartesian product of the intervals $[w_1, w'_1]$ and $[w_2, w'_2]$.
\end{theorem}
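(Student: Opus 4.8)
The plan is to exhibit the obvious candidate isomorphism
\begin{equation*}
\Phi \colon [w_1, w'_1] \times [w_2, w'_2] \longrightarrow I, \qquad \Phi(a,b) = a \colle b,
\end{equation*}
and to prove that it is a poset isomorphism. That $\Phi$ lands in $I$ and is order-preserving is immediate from monotonicity of $\colle$: if $a \leq a'$ and $b \leq b'$, then iterating \cref{factor_cover_left} and \cref{factor_cover_right} along saturated chains produces a chain of cover moves from $a \colle b$ to $a' \colle b'$, exactly as in the lemma preceding the definition of $\monoi$. Since all posets here are finite, an order-preserving bijection that also \emph{reflects} cover relations is automatically an isomorphism (the order is the transitive closure of the covers). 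So the whole proof reduces to: (i) $\Phi$ is injective; (ii) $\Phi$ preserves and reflects cover relations; and (iii) $\Phi$ is surjective, which I will obtain as a by-product of (ii).

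For injectivity I would first isolate the following factorization fact. Write $w'_1 \colle w'_2 = W_1 \colle \dots \colle W_{k+\ell}$ for the unique expression of the maximum as a product of generators of $\monod$, where $W_1,\dots,W_k$ come from $w'_1$ and $W_{k+1},\dots,W_{k+\ell}$ from $w'_2$. By \cref{cover_colle} every $z \leq w'_1 \colle w'_2$ has a unique coarse factorization $z = z_1 \colle \dots \colle z_{k+\ell}$ with $z_i \leq W_i$ (this is precisely the decomposition used in the freeness proof of $\monoi$). Now if $a \leq w'_1$ and $b \leq w'_2$, with respective coarse factorizations $a = \alpha_1 \colle \dots \colle \alpha_k$ and $b = \beta_1 \colle \dots \colle \beta_\ell$, then $a \colle b = \alpha_1 \colle \dots \colle \alpha_k \colle \beta_1 \colle \dots \colle \beta_\ell$ has each factor below the corresponding $W_i$; by uniqueness this \emph{is} the coarse factorization of $a \colle b$. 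Hence $a$ and $b$ are recovered from $a \colle b$ by cutting its coarse factorization at position $k$, which gives injectivity and simultaneously pins down the candidate inverse.

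The heart of the argument is the analysis of a single cover move. That $\Phi$ preserves covers is direct: a cover $(a,b) \lessdot (a,b')$ maps to the cover $a \colle b \to a \colle b'$ by \cref{factor_cover_left}, and $(a,b) \lessdot (a',b)$ to $a \colle b \to a' \colle b$ by \cref{factor_cover_right}. For reflection, I take a cover $a \colle b \to z'$ in $I$ and apply \cref{cover_colle} to the generator factorization of $a \colle b$, which (by freeness of $\monod$) is the concatenation of those of $a$ and of $b$. The move either acts inside a single generator factor — forcing $z' = a' \colle b$ with $a \lessdot a'$, or $z' = a \colle b'$ with $b \lessdot b'$ — or it merges several consecutive generator factors. \textbf{The main obstacle is excluding a merge that straddles the boundary between the generators of $a$ and those of $b$.} I would rule this out using the block-level content of \cref{cover_colle}: a merge arises only from sliding a full block, and it fuses the trailing level-factors of one block with the block-separator $(1,0,1,0)$ preceding the next block, leaving the level-factors of the following block untouched; since the $a \colle b$ junction sits \emph{inside} a fused block and carries no separator there, no merge can cross it. Equivalently, both $a \colle b$ and $z'$ lie below $W = w'_1 \colle w'_2$, and uniqueness of the coarse $W$-factorization keeps the $w'_1$-part and the $w'_2$-part segregated, so a generator of the $a$-side can never merge with one of the $b$-side. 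In every case $z'$ again has the form $a' \colle b$ or $a \colle b'$ with a single cover in one coordinate.

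Finally I would assemble surjectivity and conclude. The minimum of the interval $I$ is $w_1 \colle w_2 = \Phi(w_1, w_2)$, and every element of a finite interval is joined to its minimum by a saturated chain inside the interval. Lifting such a chain step by step through the cover-reflection established above shows that each $z \in I$ equals $\Phi(a,b)$ for some $(a,b)$ with $w_1 \leq a \leq w'_1$ and $w_2 \leq b \leq w'_2$; thus $\Phi$ is onto $I$. Together with injectivity and the fact that $\Phi$ both preserves and reflects covers, this makes $\Phi$ the desired isomorphism of posets between $I = [w_1,w'_1] \colle [w_2,w'_2]$ and the Cartesian product $[w_1,w'_1] \times [w_2,w'_2]$.
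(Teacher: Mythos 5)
Your proof is correct and follows the same route as the paper's: covers in the cartesian product are transported to covers in $I$ via \cref{factor_cover_left} and \cref{factor_cover_right}, and conversely \cref{cover_colle} applied to the generator factorization of the maximum $w'_1 \colle w'_2$ confines every cover move in $I$ to one of the two factors. The paper's version is far terser --- it leaves implicit the bijectivity and the exclusion of a merge straddling the junction between the two groups of generators, both of which you spell out via the coarse factorization --- but the underlying argument is the same.
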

\begin{proof}
  The isomorphism is just given by the product $\colle$ of Dyck
  paths. By \cref{factor_cover_left} and \cref{factor_cover_right},
  every cover move in the cartesian product of intervals is a cover
  move in $I$. 

  Conversely, by \cref{cover_colle}, every cover move in the interval
  $I$ takes place inside one factor of the maximal element of $I$,
  hence either inside $w'_1$ or $w'_2$.
\end{proof}

The number of generators of the monoid $\monoi$ is a sequence starting with
\begin{equation}
  3,
  3,
  11,
  51,
  267,
  1507,
  8955,
  55251,
  350827,\dots
\end{equation}
which apparently has not already been studied.

\subsection{Specific corollaries}

\label{specific}

Let us now state some interesting special cases of \cref{factoriser}.

First, there is a simple factorisation property, for which we give
another direct proof.

Let $[u,v]$ be an interval in $\Dyck_n$. Let $v_1,\dots,v_k$ be the
unique decomposition of $v$ into blocks. Because the covering moves in
$\Dyck_n$ can only increase the heights, at every point where $v$
touches the horizontal line, the Dyck path $u$ must also touch the
horizontal line. So one can decompose $u$ by cutting at these
points. This defines the Dyck paths $u_i$ for $i=1,\dots,k$.

\begin{proposition}
  \label{facto_tous}
  Every interval $[u,v]$ in $\Dyck_n$ is isomorphic to the cartesian
  product of the intervals $[u_i,v_i]$ for $i = 1,\dots,k$.
\end{proposition}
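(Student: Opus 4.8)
The plan is to reduce Proposition~\ref{facto_tous} to a repeated application of \cref{factoriser}. The statement decomposes $v$ into its blocks $v_1,\dots,v_k$, and the key observation already recorded before the statement is that $u$ must touch the horizontal line at every point where $v$ does, so that $u$ inherits a compatible decomposition $u = u_1\cdots u_k$ (as a concatenation of Dyck paths). The main work is to connect this block-concatenation decomposition to the monoid product $\colle$, and then to iterate \cref{factoriser}.

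First I would make precise the relationship between concatenation of Dyck paths and the monoid product $\colle$. By \cref{concat_colle}, the concatenation $u_iu_{i+1}$ of two Dyck paths corresponds on the pseudo-Dyck side to $\ov{u_i}\pscolle(0,1)\pscolle\ov{u_{i+1}}$; transported to Dyck paths via the $w\mapsto\ov w$ bijection, this says that the concatenation of blocks is \emph{not} literally $u_i\colle u_{i+1}$, but rather involves the extra factor $(0,1)$, i.e. the Dyck path $(1,0,1,0)$. So the correct statement is that the full block-concatenation $v=v_1\cdots v_k$ equals the $\monod$-product
\begin{equation*}
  v = v_1 \colle (1,0,1,0) \colle v_2 \colle (1,0,1,0) \colle \dots \colle (1,0,1,0) \colle v_k,
\end{equation*}
where the $(1,0,1,0)$ separators record each return of $v$ to the horizontal axis, and similarly for $u$ with the \emph{same} separators (this is exactly where one uses that $u$ returns to the axis wherever $v$ does). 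I would verify this by induction on $k$ using \cref{concat_colle}, noting that each $v_i$ being block-indecomposable means it is itself not further split by $\colle$.

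Next, I would apply \cref{factoriser} repeatedly to this product expression. Since $[u,v] = [u_1,v_1]\colle[(1,0,1,0),(1,0,1,0)]\colle\dots\colle[u_k,v_k]$ as an interval (using that $u_i\le v_i$ for each $i$, which follows because $u\le v$ forces the comparison blockwise once the decompositions are aligned), \cref{factoriser} gives a poset isomorphism with the cartesian product of all the factor intervals. The intervals $[(1,0,1,0),(1,0,1,0)]$ are singletons and contribute nothing to the cartesian product, so what survives is exactly $\prod_{i=1}^k [u_i,v_i]$, as claimed.

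The main obstacle I anticipate is bookkeeping the separator factors $(1,0,1,0)$ cleanly: one must confirm that the block-decomposition of $v$ really corresponds to the $\monod$-factorisation with these separators in the stated positions, and that the induced decomposition of $u$ uses precisely the same separators. The deduction that $u_i\le v_i$ in each factor also deserves a word: it follows from \cref{cover_colle}, which guarantees that in the unique $\monod$-factorisation of $v$, the minimum $u$ factors compatibly with $u_i\le v_i$, matching the geometric decomposition of $u$ obtained by cutting at the return points of $v$. Once these alignment facts are in place, the isomorphism is immediate from \cref{factoriser} and needs no further computation.
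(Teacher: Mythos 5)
Your proof is correct, but it follows a different route from the one written in the paper. The paper's own proof is direct and does not invoke \cref{factoriser} at all: it observes that every element of $[u,v]$ (not just $u$) must touch the horizontal line at each touch point of $v$, so that cutting at these points gives a bijection between $[u,v]$ and $\prod_i [u_i,v_i]$, with inverse given by concatenation, and this is visibly a poset isomorphism. You instead realise the concatenation $v_1\cdots v_k$ as the monoid product $v_1\colle(1,0,1,0)\colle\cdots\colle(1,0,1,0)\colle v_k$ via \cref{concat_colle} and then iterate \cref{factoriser}, discarding the singleton factors $[(1,0,1,0),(1,0,1,0)]$. This is exactly the reading the paper itself hints at (the section is titled ``Specific corollaries'' and the text announces ``another direct proof'', implying the corollary-of-\cref{factoriser} derivation is the default one), so your approach is legitimate; its cost is the alignment bookkeeping you correctly flag. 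In particular, establishing $u_i\le v_i$ via \cref{cover_colle} requires one extra step you only gesture at: \cref{cover_colle} and the freeness proposition give a factorisation of $u$ indexed by the \emph{generators} of $\monod$ appearing in $v$, and each block $v_i$ is itself a product of several generators (its level-decomposition), so you must group consecutive generator-factors of $u$ according to the blocks of $v$, note that the factor sitting under each separator $(1,0,1,0)$ is forced to equal it (being the minimum of $\Dyck_2$), and match the grouped products with the $u_i$ by a size count. The paper's cut-and-paste argument avoids all of this at the price of leaving the poset-isomorphism claim as an assertion; your route buys a cleaner logical dependence on the already-proved \cref{factoriser}.
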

\begin{proof}
  The same property used above to decompose $u$ is true for all
  elements of the interval $[u,v]$. Cutting at the touch points of $v$
  defines a bijection, with inverse just given by concatenation,
  between elements of $[u,v]$ and elements of the product of the
  intervals $[u_i,v_i]$. This is clearly an isomorphism of posets.
\end{proof}

For $w\in \Dyck_n$, let $I(w)$ be the interval $[\wmin, w]$. As a
special case of \cref{facto_tous}, every interval $I(w)$ is isomorphic
to the product of the intervals $I(w_i)$ over the blocks $w_i$ of $w$.

For $w\in \Dyck_n$, let $J(w)$ be the interval $[\wmin, (1,w,1,0,0)]$
in $\Dyck_{n+2}$.

Keeping the same notations, one has another factorisation result, easily deduced from \cref{factoriser}.
\begin{theorem}
  \label{th_level_iso}
  Let $w$ be a block-indecomposable Dyck path. The interval $I(w)$ is
  isomorphic to the product of the intervals $J(w_i)$, where the $w_i$
  are the Dyck paths in the level-decomposition of $w$.
\end{theorem}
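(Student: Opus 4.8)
The plan is to realise $I(w)$ as a single $\colle$-product of the intervals $J(w_i)$ inside the monoid $\monoi$, and then to read off the isomorphism from \cref{factoriser}.

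First I would record how $w$ itself factors. Write $w_i \in \Dyck_{a_i}$ and set $g_i = (1, w_i, 1, 0, 0)$, which is exactly a generator of $\monod$, with $\ov{g_i} = (w_i, 1, 0)$. By \cref{level_colle} the level-decomposition gives $\ov{w} = (w_1,1,0) \pscolle \dots \pscolle (w_k,1,0) = \ov{g_1} \pscolle \dots \pscolle \ov{g_k}$. Since the product $\colle$ on Dyck paths is by definition the transport of $\pscolle$ through $w \mapsto \ov{w}$, this reads $w = g_1 \colle \dots \colle g_k$ in $\monod$.

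Next, in the interval monoid $\monoi$, consider the product $J(w_1) \colle \dots \colle J(w_k)$. Since $J(w_i) = [\wmin, g_i]$, the very definition of the product on intervals gives $J(w_1) \colle \dots \colle J(w_k) = [\wmin \colle \dots \colle \wmin,\ g_1 \colle \dots \colle g_k]$, whose maximum is $w$ by the previous paragraph. Applying \cref{factoriser} and inducting on $k$ --- using associativity of $\colle$ together with associativity of the cartesian product --- this interval is isomorphic as a poset to $\prod_{i=1}^k J(w_i)$.

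It therefore only remains to identify the minimum $\wmin \colle \dots \colle \wmin$ of that interval with $\wmin$, so that the interval is literally $I(w) = [\wmin, w]$; this is the one spot needing a computation, and the only possible obstacle. By associativity it suffices to prove that the $\colle$-product of the minima of $\Dyck_a$ and $\Dyck_b$ is the minimum of $\Dyck_{a+b-1}$. The minimum of $\Dyck_N$ is $(1,(0,1)^{N-1},0)$, i.e. its image under $w \mapsto \ov{w}$ is $(0,1)^{N-1}$, so through $u \colle v = (1, \ov{u}\pscolle\ov{v}, 0)$ the claim becomes $(0,1)^{a-1} \pscolle (0,1)^{b-1} = (0,1)^{a+b-2}$. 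This is immediate from the definition of $\pscolle$: the left factor ends in $1$ (no trailing $0$, so $k = 0$), while the right factor immediately drops to height $-1$ (its nonnegative prefix is empty), and the product is their plain concatenation. Induction then yields $\wmin \colle \dots \colle \wmin = \wmin$ (the grading of $\monod$ confirms the total size), and combining with the previous paragraph gives $I(w) \cong \prod_i J(w_i)$. I expect the entire difficulty to sit in \cref{factoriser}, already proved; the genuinely new check is merely that the minimum --- and not just the maximum --- is multiplicative under $\colle$.
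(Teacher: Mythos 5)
Your proof is correct and takes exactly the route the paper intends: the paper gives no written proof, saying only that the theorem is ``easily deduced from \cref{factoriser}'', and your argument is precisely that deduction --- write $w=g_1\colle\dots\colle g_k$ via \cref{level_colle}, multiply the intervals $J(w_i)$ in $\monoi$, and iterate \cref{factoriser}. The one detail the paper leaves implicit, namely that the minimum of the product interval is again $\wmin$ (i.e.\ $\wmin\colle\wmin=\wmin$), you verify correctly from the definition of $\pscolle$.
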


It follows that in this case the isomorphism type of the interval
$I(w)$ only depends on the set of Dyck paths in the
level-decomposition of $w$, and not on their order. Combining with
Prop. \ref{facto_tous}, one obtains the following clean statement.
\begin{theorem}
  \label{th_iso}
  For any $w$, the isomorphism type of the interval $I(w)$ only
  depends on the union of the sets of Dyck paths in the
  level-decomposition of all blocks of $w$.
\end{theorem}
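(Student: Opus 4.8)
The plan is to chain together the two factorisation theorems just established and then invoke the fact that a finite Cartesian product of posets is associative and commutative up to isomorphism, so that the isomorphism type of such a product is determined by the multiset of its factors alone.

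First I would split $w$ into its blocks $w = b_1 \cdots b_m$, each $b_j$ being block-indecomposable. By the special case of \cref{facto_tous} recorded just after its proof, one has an isomorphism of posets
\begin{equation*}
  I(w) \cong \prod_{j=1}^{m} I(b_j).
\end{equation*}
Since every $b_j$ is block-indecomposable, \cref{th_level_iso} applies to each of them and gives $I(b_j) \cong \prod_{i} J(w_{j,i})$, where the $w_{j,i}$ are the Dyck paths appearing in the level-decomposition of $b_j$. Substituting, I obtain
\begin{equation*}
  I(w) \cong \prod_{j=1}^{m} \prod_{i} J(w_{j,i}),
\end{equation*}
a product of intervals of the form $J(v)$ indexed by all level-decomposition pieces of all blocks of $w$.

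It then remains only to observe that a finite Cartesian product of posets does not depend, up to isomorphism, on the order in which the factors are taken nor on how they are grouped. Consequently the right-hand side above depends only on the multiset $\mathcal{M}$ obtained as the union, over all blocks $b_j$, of the collections of Dyck paths occurring in the level-decomposition of $b_j$; it is insensitive both to the partition of $w$ into blocks and to the internal order of each level-decomposition. As $\mathcal{M}$ is precisely the datum named in the statement, the isomorphism type of $I(w)$ is a function of $\mathcal{M}$ alone.

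I do not expect any genuine obstacle here: all the mathematical weight is carried by \cref{facto_tous} and \cref{th_level_iso}, and the closing step is merely the elementary commutativity and associativity of the Cartesian product. The only point that demands care is that the ``sets'' in the statement must be read as multisets, since a Dyck path $v$ occurring with multiplicity $r$ among the pieces contributes $r$ factors $J(v)$ to the product, and this multiplicity evidently affects the isomorphism type.
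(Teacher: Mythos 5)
Your proof is correct and follows exactly the paper's route: decompose into blocks via \cref{facto_tous}, apply \cref{th_level_iso} to each block-indecomposable factor, and conclude by associativity and commutativity of the Cartesian product of posets. Your closing remark that ``union of the sets'' must be understood as a multiset union is a fair and worthwhile clarification, since a piece $v$ occurring $r$ times contributes the factor $J(v)^{r}$ and the multiplicity genuinely affects the isomorphism type.
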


\subsection{Factorisation of principal upper ideals}

\label{princi}

Let us now study the principal upper ideals in $\Dyck_n$. When looking at the
posets $\Dyck_n$ for small $n$, one can see that some of their
principal upper ideals are isomorphic to (products of) principal upper
ideals for some smaller Dyck paths. The main result of this section is
a general description of that phenomenon.

For a Dyck path $w$, let $\up(w)$ be the principal upper ideal
generated by $w$, namely the set of all $u$ such that $w \leq u$.

Let us say that a Dyck path $w$ admits a \defi{strip} if it can be written as
$(u,1,v,1,0,0,0^k)$ where $v$ is any Dyck path. The strip itself is graphically the horizontal region of width $1$ ranging from the letter $1$ after $u$ to the letter $0$ before $0^k$.

\begin{lemma}
  A Dyck path $w$ does not admit a strip if and only if $w$ is empty or
  $w$ ends by $(1,0)$.
\end{lemma}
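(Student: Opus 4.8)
The plan is to prove the equivalent statement: a nonempty Dyck path $w$ admits a strip if and only if it does \emph{not} end with $(1,0)$, equivalently if and only if its final run of $0$'s has length at least $2$. Since every nonempty Dyck path ends with the letter $0$, its penultimate letter is either $1$ (the path ends with $(1,0)$) or $0$ (the path ends with $(0,0)$), so this reformulation is exactly the contrapositive of the lemma once the empty path is disposed of separately.

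For the ``only if'' direction (no strip when $w$ is empty or ends with $(1,0)$) I would simply read off the shape of a strip. Any decomposition $w = (u,1,v,1,0,0,0^k)$ contains at least the four explicit letters $1,1,0,0$ on top of $u$, $v$ and $0^k$, so $w$ is nonempty; moreover its last two letters are $0,0$ regardless of $k$. Hence a path admitting a strip is nonempty with a final run of $0$'s of length $\geq 2$, so it can be neither empty nor end with $(1,0)$.

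For the ``if'' direction I would construct the strip. Let $m \geq 2$ be the length of the trailing run of $0$'s of $w$. These $0$'s descend from height $m$ down to $0$ and so stay strictly above $0$ except at the final point; therefore they lie inside the last block of $w$, which is block-indecomposable and ends with exactly $m$ letters $0$. Applying \cref{level_decomposition} with parameter $m-1 \geq 1$ writes this last block as $(1,w_1,1,\dots,1,w_{m-1},1,0^m)$ for some Dyck paths $w_1,\dots,w_{m-1}$. Taking $v = w_{m-1}$ and letting $u$ collect everything preceding the suffix $(1,w_{m-1},1,0^m)$, one obtains $w = (u,1,v,1,0,0,0^{m-2})$, a valid strip with parameter $m-2 \geq 0$.

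The argument is short, and the only delicate points are bookkeeping: confirming that the trailing run of $0$'s belongs entirely to the last block, and checking that the level-decomposition produces at least one factor (so that a genuine pattern $1,v,1$ appears at the end) precisely because $m \geq 2$. An alternative to invoking \cref{level_decomposition}, slightly more hands-on, is to remove the final $(1,0^m)$ to obtain a nonempty prefix $w''$ ending at height $m-1$, locate the last visit of $w''$ to height $m-2$, then take $v$ to be the portion of $w''$ after the subsequent up-step and $u$ the portion up to that visit; this also yields the strip and makes transparent that the edge cases (where $v$ or $u$ turns out empty) cause no trouble.
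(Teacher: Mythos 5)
Your proof is correct and follows essentially the same route as the paper: both reduce the nonempty, not-ending-in-$(1,0)$ case to locating the suffix $(1,v,1,0,0,0^{m-2})$ via the level decomposition, the paper by isolating the unique block-indecomposable subpath ending at the second $0$ of the final run, you by decomposing the whole last block and taking its last factor $w_{m-1}$ — which is the same subpath. The bookkeeping you flag (the trailing $0$'s lie in the last block, and $m\geq 2$ guarantees at least one factor) is handled correctly.
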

\begin{proof}
  If $w$ ends by $(1,0)$, it clearly admits no strip.

  Suppose that $w$ does not end with $(1,0)$, and consider the second
  $0$ in the final sequence of $0$. There is exactly one block
  indecomposable Dyck path $x$ inside $w$ that ends with this letter
  $0$. Because the final sequence of $0$ in $x$ has length $2$, $x$
  can be written as expected.
\end{proof}

Suppose now that $w$ admits a strip, so that $w = (u,1,v,1,0,0,0^k)$. Let $u' = (u,1,0,0^k)$.
\begin{proposition}
  \label{princi_factoriser}
  The principal upper ideal
  $\up(w)$ is isomorphic as a poset to the product of $\up(u')$ and $\up(v)$.
\end{proposition}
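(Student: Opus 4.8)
The plan is to move the whole question into the free monoid $\monod$, where the strip collapses to a single generator, and then to reuse \cref{cover_colle} together with the freeness of $\monod$ (\cref{free_M1}). The first step is to record the monoid identity
\begin{equation*}
  w = u' \colle (1,v,1,0,0),
\end{equation*}
where $g := (1,v,1,0,0)$ is one of the free generators of $\monod$. This is obtained by unwinding the definition of $\colle$ through $\pscolle$: one has $\ov{g}=(v,1,0)$, and inserting $\ov{g}$ into the final run of $0$'s of $\ov{u'}$ reproduces exactly the word $(u,1,v,1,0,0,0^{k})=w$, so that $w$ is the product of $u'$ with the single generator $g$. Accordingly I would propose
\begin{equation*}
  \Psi\colon \up(u')\times\up(v)\longrightarrow \up(w),\qquad \Psi(a,b)=a\colle(1,b,1,0,0),
\end{equation*}
as the candidate isomorphism, and verify in turn that it is well defined, bijective, and order preserving both ways.

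Well-definedness and injectivity are the easy half. If $a\geq u'$ and $b\geq v$, then $(1,b,1,0,0)\geq(1,v,1,0,0)$, hence $\Psi(a,b)\geq u'\colle(1,v,1,0,0)=w$ by the monotonicity of $\colle$ established just before \cref{factoriser}; so $\Psi$ lands in $\up(w)$. Injectivity is immediate from freeness: since $(1,b,1,0,0)$ is always a generator, it is the last factor in the unique generator-factorisation of $\Psi(a,b)$, so both the content $b$ and the prefix $a$ can be read off.

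The heart of the argument, and the step I expect to be the main obstacle, is surjectivity: every $z\geq w$ must have the form $a\colle(1,b,1,0,0)$ with $a\geq u'$ and $b\geq v$. I would argue this by induction along a saturated chain $w=z_0\lessdot z_1\lessdot\cdots\lessdot z_r=z$, tracking the generator-factorisation and applying \cref{cover_colle} at each cover. The key invariant is that the \emph{last} factor stays of the shape $(1,b,1,0,0)$ and that its content $b$ only moves upward. Indeed, a cover move internal to the last factor is by \cref{cover_colle} exactly a cover move $b\to b'$ in $\Dyck$; and when the last block is slid and merged into an earlier one, \cref{cover_colle}(A) shows that its unique level $b$ reappears unchanged as the last level of the fused block, so the last generator and its content survive, while the preceding factors remain a product $\geq u'$ in $\monod$. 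In particular the last factor never degenerates to the separator $(1,0,1,0)$, and the cut persists all the way up; the delicate bookkeeping here is precisely which consecutive prefix-factors merge, and checking that such merges never disturb $b$.

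Finally, for the order isomorphism I would match covering relations in both directions. A cover $b\to b'$ lifts first to a cover $(1,b,1,0,0)\to(1,b',1,0,0)$ (a direct check, also a reading of \cref{cover_colle}) and then, via \cref{factor_cover_left} with prefix $a$, to a cover $\Psi(a,b)\to\Psi(a,b')$; a cover $a\to a'$ lifts to $\Psi(a,b)\to\Psi(a',b)$ by \cref{factor_cover_right} with suffix $(1,b,1,0,0)$. Conversely, \cref{cover_colle} shows that each cover out of $z=\Psi(a,b)$ is internal to a single factor, hence is a cover either in the $v$-coordinate (the last factor) or in the $u'$-coordinate (the remaining factors), the other coordinate being fixed. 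Thus $\Psi$ is a bijection carrying covers bijectively to covers, and so an isomorphism of posets. Once surjectivity is in hand one may alternatively package this last step through \cref{factoriser}: for each $a\geq u'$, $b\geq v$ the interval $[w,\,a\colle(1,b,1,0,0)]$ is isomorphic to $[u',a]\times[(1,v,1,0,0),(1,b,1,0,0)]$, and these isomorphisms glue compatibly over the directed union of intervals exhausting $\up(w)$, the second factor being identified with $[v,b]$ by the same arch-removal bijection.
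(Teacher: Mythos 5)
Your proof is correct, but it takes a genuinely different route from the paper's. The paper disposes of \cref{princi_factoriser} in two sentences by arguing directly on sliding moves: any cover move from an element of $\up(w)$ either happens inside $v$, or before the strip where it matches a move in $u'$, and the strip persists. You instead record the monoid identity $w = u' \colle (1,v,1,0,0)$ (which does hold, as your computation with $\pscolle$ shows) and then run everything through the machinery of Sections 2--4: freeness of $\monod$ gives injectivity, \cref{factor_cover_left} and \cref{factor_cover_right} show that $\Psi$ carries covers to covers, and \cref{cover_colle} gives the converse together with surjectivity. The crucial point, which you correctly isolate, is that in a block merge the levels of the \emph{slided} block are appended unchanged to the fused block, so the final generator $(1,b,1,0,0)$ is never absorbed into the prefix and the cut survives all the way up the ideal; this is exactly the extra ingredient needed to upgrade \cref{factoriser} (which anchors the factorisation at the \emph{top} of an interval, where going down only splits factors) to an upper ideal, where going up merges them. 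The paper's argument is shorter; yours makes precise its claims that moves ``can be matched'' with moves in $u'$ and that the resulting path ``has the same properties''. Two small imprecisions, neither fatal: the last block of an element of $\up(w)$ is $\level(y_1,\dots,y_m,b)$ in general, so $b$ is its last level rather than its ``unique level'' (what \cref{cover_colle}(A) gives is that $b$ stays last, which is all you use); and in your closing alternative the union of intervals $[w,z]$ exhausting $\up(w)$ need not be directed, though the gluing still works since all the local isomorphisms are restrictions of the single map $\Psi$.
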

\begin{proof}
  By the hypotheses on $w$, any sliding move from $w$ either happens
  inside $v$, or somewhere before $v$ in which case it can be matched
  with a sliding move in $u'$. Moreover the resulting $w'$ has the
  same properties as $w$ with modified $u$ or $v$, so that the full
  principal upper ideal factorizes as expected.
\end{proof}
Conversely, one can realize in this way the product of any two
principal upper ideals $\up(u')$ and $\up(v)$, assuming only that the
first one is not empty.

\medskip

As the simplest non-trivial example, $\up((1,0,1,1,0,1,0,1,0,0))$ is isomorphic to the product of $\up((1,0,1,0))$ by itself.

\medskip

Let us define a \defi{reduced interval} as an interval whose minimum
is either empty or of the shape $(w,1,0)$.

From \cref{princi_factoriser}, one can deduce a bijection between
non-reduced intervals and pairs (non-empty interval, interval). A
non-reduced interval is the same as an element in $\up(w)$ for some
non-empty $w$ not of the shape $(w,1,0)$. Therefore one can use the
factorization above to map uniquely this element to a pair of elements
in two arbitrary upper ideals, which give two arbitrary intervals (the first one being non-empty).

At the level of generating series for intervals, one gets from this
decomposition that
\begin{equation}
  f_A = f_R + t (f_A - 1) f_A,
\end{equation}
where $f_A$ is the generating series for all intervals and $f_R$ the
generating series for reduced intervals. With an additional
variable $s$ accounting for the number of blocks in the minimum of the
intervals, one gets the refined equation
\begin{equation}
  \label{eq_fA_fR}
  f_A = f_R + t (f_A - 1) f_A|_{s=1}.
\end{equation}

\section{Properties of the core intervals}

\label{magie}

Let us study now the \textit{core intervals}, namely intervals whose
minimum has the shape $(v,1,0)$ (shape A) and whose maximum has the
shape $(1,w,1,0,0)$ (shape B), where $v$ and $w$ are Dyck paths. In
this section, one assumes that $n \geq 2$.

Let $E_n$ be the subset of elements of $\Dyck_n$ that have either
shape A or shape B.

\begin{lemma}
  The set $E_n$ is a lower ideal in $\Dyck_n$.
\end{lemma}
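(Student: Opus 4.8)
The plan is to reduce the statement to a local analysis of the Hasse-diagram edges. Since the order on $\Dyck_n$ is generated by its cover relations, it suffices to show that for every cover move $w \to w'$ (so $w < w'$) the membership $w' \in E_n$ forces $w \in E_n$; equivalently, I would prove that the complement of $E_n$ is closed upward under cover moves. Throughout I use the standing assumption $n \geq 2$.

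First I would record a concrete description of $E_n$. The shape-A paths are exactly those whose last block is $(1,0)$, since $(v,1,0)$ with $v$ a Dyck path is precisely a path whose final block is the trivial peak. The shape-B paths are exactly the block-indecomposable paths whose trailing run of $0$'s has length exactly $2$: writing a block-indecomposable path as $(1,w,1,0,0)$ and peeling off the outer $1$ and the final $1,0,0$ recovers a Dyck path $w$, and conversely. Using that a block-indecomposable path of size $\geq 2$ always ends in at least two $0$'s (a single trailing $0$ would force height $0$ two steps before the end), and that a multi-block path is never block-indecomposable while its last block equals $(1,0)$ exactly when its trailing $0$-run has length $1$, one obtains: $w \notin E_n$ if and only if either (a) $w$ has at least two blocks and trailing $0$-run of length $\geq 2$, or (b) $w$ is block-indecomposable with trailing $0$-run of length $\geq 3$.

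Next I would classify the movable subpath used in a cover move into two types: type~1, where the slid subpath is the last block of $w$ (which forces $w$ to have $\geq 2$ blocks and an empty tail after it), and type~2, where the slid subpath is followed by the letter $1$ and is therefore strictly interior. Writing the move as $w=(\alpha,0^N,x,\beta)\to w'=(\alpha,0^{N-i},x,0^i,\beta)$ with $1 \leq i \leq N$, the two key observations are: for a type~2 move the suffix $\beta$ is left untouched and starts at the same height in $w$ and $w'$, so the trailing $0$-run is unchanged; for a type~1 move the path ends in $x$ followed by $0^i$ with $i \geq 1$, so its trailing $0$-run has length (trailing run of $x$) $+ i \geq 2+1 = 3$, because in case (a) the last block $x \neq (1,0)$ already ends in at least two $0$'s. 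Granting these, case (b) is immediate: only type~2 moves exist (a single block has no movable last block), the trailing run is preserved at length $\geq 3$, and a trailing run of length $\geq 3$ excludes both shapes; and case (a) splits into the type~1 subcase (trailing run becomes $\geq 3$) and the type~2 subcase (trailing run stays $\geq 2$, so $w'$ is not shape A, and $w'$ remains multi-block, so it is not shape B).

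The main obstacle is the type~2 subcase of (a) when $w$ has trailing $0$-run exactly $2$: here the trailing run is preserved at $2$, so the only way to enter $E_n$ would be to become shape B, that is, block-indecomposable. The crux is therefore the claim that a type~2 slide can never turn a multi-block path into a block-indecomposable one, and I would settle it by the height bookkeeping above. If the slid subpath $x$ sits at positive base height, the modified region $0^{N-i},x,0^i$ stays strictly above $0$, so the interior returns to $0$ of $w'$ are exactly those of the unchanged $\alpha$ and $\beta$, which are those of $w$; hence $w$ multi-block implies $w'$ multi-block. If $x$ sits at base height $0$ (a full interior block, necessarily followed by a nonempty $\beta$ beginning with $1$), then the point just before $\beta$ has height $0$ in both $w$ and $w'$, giving an interior return to $0$ in $w'$. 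In either case $w'$ keeps an interior return to $0$ and is multi-block, which closes the argument and shows $E_n$ is a lower ideal.
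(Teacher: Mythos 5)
Your proof is correct and follows essentially the same route as the paper, which simply asserts that an element of shape B can only cover elements of shape A or B and an element of shape A only elements of shape A; you prove the contrapositive (the complement is upward closed under cover moves) and supply in full the trailing-zero-run and block-count bookkeeping that the paper leaves implicit. The characterization of shapes A and B via the last block and the length of the trailing $0$-run, and the verification that a type-2 slide cannot merge a multi-block path into a single block, are both accurate.
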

\begin{proof}
  An element of shape B can only cover elements of shape A
  or B. An element of shape A can only cover elements of shape A.
\end{proof}

It follows that the Hasse diagram of the induced partial order on
$E_n$ is just the restriction of the Hasse diagram of $\Dyck_n$.

For every element $w \in \Dyck_{n-2}$, let us define a chain $E(w)$ of
cover moves in $E_n$. Start from $e_0(w)=(1,0,w,1,0)$. Sliding the
second block of $e_0(w)$ to height $1$ defines $e_1(w)$, which has one
block less then $e_0(w)$. Repeat the same operation and define
successive $e_i(w)$ until reaching a block-indecomposable Dyck path
$e_k(w)$, which has necessarily shape B. The number $k$ is the number
of blocks of $w$ plus $1$.

Suppose that $w$ is a concatenation of blocks $w_1,\dots,w_{k-1}$. One
can describe the chain $E(w)$ completely: the element $e_{i-1}(w)$ for
$1 \leq i \leq k$ is
\begin{equation*}
  (1,w_1,w_2,\dots,w_{i-1},0,w_{i},\dots,w_{k-1},1,0)
\end{equation*}
and the last element $e_k(w)$ is just $(1,w,1,0,0)$. One can therefore recover the
first element of this chain from its last element.

\begin{lemma}
  The set $E_n$ is the disjoint of the chains $E(w)$ for
  $w \in \Dyck_{n-2}$.
\end{lemma}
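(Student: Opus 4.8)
=== PROOF PLAN (for compilation) ===

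The final statement claims that $E_n$ decomposes as a disjoint union of the chains $E(w)$ as $w$ ranges over $\Dyck_{n-2}$. The plan is to prove this by exhibiting a well-defined surjection from $E_n$ onto $\Dyck_{n-2}$ whose fibers are exactly the chains $E(w)$, which simultaneously establishes that the chains cover $E_n$ and that distinct chains are disjoint. First I would observe that the chains $E(w)$ for distinct $w$ are automatically disjoint once I can recover $w$ from any element of $E(w)$: the preceding paragraph already explains that the last element $e_k(w)=(1,w,1,0,0)$ determines $w$, and the explicit formula for $e_{i-1}(w)$ shows every element of the chain encodes the same underlying blocks $w_1,\dots,w_{k-1}$ of $w$ in a recoverable way. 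So the crux is to define a map $\pi\colon E_n \to \Dyck_{n-2}$ that sends each $e_i(w)$ back to $w$, and then to check two things: that $\pi$ is well defined on all of $E_n$ (every element of $E_n$ has one of the two prescribed forms), and that the fiber $\pi^{-1}(w)$ is precisely the chain $E(w)$.

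The key computational step is the case analysis on the two shapes. An element of shape B is $(1,w,1,0,0)$, and I would set $\pi$ of it to be $w$; this lands in $\Dyck_{n-2}$ since removing the outer structure leaves a Dyck path of the correct size. An element of shape A is $(v,1,0)$ with $v \in \Dyck_{n-1}$; here I would use the explicit description of the chain to read off $w$. Comparing with the formula $e_{i-1}(w)=(1,w_1,\dots,w_{i-1},0,w_i,\dots,w_{k-1},1,0)$, an element of shape A lies in $E(w)$ exactly when it can be written by inserting a single extra low point (the lone $0$ at height $0$) into the sequence of blocks of $w$ at one of the admissible positions; so $\pi$ on a shape-A element deletes that distinguished descent to the horizontal axis and reglues, recovering $w$. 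The routine verification is that every element of $E_n$ of shape A does arise this way for a unique $w$ and a unique index $i$, and that the two definitions agree on the overlap via the covering relation $e_{k-1}(w) \to e_k(w)$ passing from shape A to shape B.

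I would then assemble the argument: surjectivity is immediate because $e_0(w)=(1,0,w,1,0)$ maps to $w$; the fact that each fiber equals the full chain follows from matching the explicit formula for $e_{i-1}(w)$ against the reconstruction procedure, so that no element of $E_n$ is missed and none lies in two fibers. Since the fibers partition $E_n$ and each fiber is the chain $E(w)$, the disjoint union decomposition follows. The main obstacle I anticipate is purely bookkeeping: making the reconstruction of $w$ from a shape-A element unambiguous, i.e.\ showing that the distinguished low point to be deleted is canonically determined (it is the position of the block $v'$ that, after one slide, would raise to height $1$). Once that canonical choice is pinned down, everything else is direct verification, and I would lean on the already-established bijection between block-indecomposable shape-B paths and their underlying $w$ to handle the top of each chain cleanly.
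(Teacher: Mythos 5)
Your proposal is correct and is in substance the same argument as the paper's: the paper packages it as the orbits of a successor map $\theta$ on $E_n$ (from shape A, slide the second block to height $1$; from shape B, send $(1,w',1,0,0)$ back to $(1,0,w',1,0)$), while you package it as the fibers of a retraction $\pi\colon E_n \to \Dyck_{n-2}$, and both hinge on the same canonical reconstruction of $w$ from the explicit formula for $e_{i-1}(w)$ (your worry about the distinguished low point is resolved exactly as you suggest, by taking the first block of the shape-A element). No gap.
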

\begin{proof}
  Consider the following map $\theta$ from $E_n$ to $E_n$. If
  $w \in E_n$ has shape $A$, slide its second block to height $1$ to
  get $\theta(w)$. Otherwise $w$ can be written as $(1,w',1,0,0)$, so
  one can define $\theta(w)$ to be $(1,0,w',1,0)$. By the previous
  remarks, the chains $E(w)$ are nothing but the orbits of $\theta$.
\end{proof}

Note also that the final step in every chain $E(w)$ is a cover move
from shape A to shape B. Every such cover move is the last step of such a chain.

\medskip

The poset $E_n$ is not a disjoint union of total orders, but its structure
is organised around this set of chains as follows.

\begin{proposition}
  \label{autre_chaine}
  Consider a cover relation $u_1 \to u_2$ in $E_n$ where $u_1$ and
  $u_2$ are in two distinct chains $E(w_1)$ and $E(w_2)$. Then there
  exists a cover relation $w_1 \to w_2$ in $\Dyck_{n-2}$.
\end{proposition}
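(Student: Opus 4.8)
The plan is to study the projection $\phi\colon E_n \to \Dyck_{n-2}$ that sends an element $u$ to the unique $w$ with $u \in E(w)$, which is well defined by the previous lemma, and to show that a cover relation joining two distinct chains always projects to a cover relation in $\Dyck_{n-2}$. First I would record an explicit recipe for $\phi$. If $u=(1,w,1,0,0)$ has shape B, then $\phi(u)=w$. If $u=(v,1,0)$ has shape A, write $v$ as its first block $B$ followed by the concatenation $R$ of its remaining blocks; then $\phi(u)$ is the concatenation of the interior of $B$ (that is, $B$ with its outer $1$ and $0$ removed) with $R$. Comparing with the explicit description of $E(w)$ given above, one checks directly that this recipe is constant along each chain and returns its base point, so that $\phi(u_1)=w_1$ and $\phi(u_2)=w_2$ with $w_1\neq w_2$. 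It then remains to see that the cover move $u_1\to u_2$ induces a cover move $w_1 \to w_2$.

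Next I would write $u_1$ according to its shape, $u_1=e_{i-1}(w_1)$ with $1\le i\le k$ or $u_1=e_k(w_1)$, and enumerate its movable subpaths. Recall the coarse structure of a shape-A element $e_{i-1}(w_1)$: an initial $1$, then the first $i-1$ blocks of $w_1$ sitting at height $1$, then the descent $0$ realising the first return to height $0$, then the remaining blocks of $w_1$ at height $0$, and finally the block $(1,0)$. A cover move is internal to the chain precisely when it is a $\theta$-step $e_{j-1}(w_1)\to e_j(w_1)$, namely sliding by one step the first block situated just after the first return to height $0$ (or, for $i=k$, sliding the final $(1,0)$, which produces the shape-B element $e_k(w_1)$). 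Since $u_1\to u_2$ is assumed to cross chains, it is one of the remaining slides, and I would check that every such slide that stays inside $E_n$ is either a slide of a whole block of $w_1$, or a slide of a subpath strictly inside one block of $w_1$.

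Finally I would show that each of these slides projects under $\phi$ to the very same slide carried out directly on $w_1$, hence to a single cover relation $w_1\to w_2$ of $\Dyck_{n-2}$. Any slide taking place strictly on one side of the first return to height $0$ commutes with the deletions defining $\phi$, so it is literally the corresponding slide on $w_1$. The only delicate case is the slide of the first block lying just after the first return to $0$: sliding it by $t$ steps consumes that descent together with $t-1$ earlier zeros, and after deleting the descent this becomes the slide of the same block by $t-1$ steps in $w_1$, which is a genuine cover relation when $t\ge 2$ and the trivial within-chain case when $t=1$. I would also note that the other conceivable slides, in particular sliding the final $(1,0)$ when $u_1$ is not the last element of its chain, create an interior return to height $0$ and thus a word that is neither of shape A nor of shape B; such a word leaves the lower ideal $E_n$ and is therefore not an edge of the induced poset. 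Collecting the cases gives $\phi(u_2)=w_2$ together with the cover relation $w_1\to w_2$.

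The main obstacle is the bookkeeping in this last step, and especially the boundary-crossing slide: one must track precisely which zeros the move $M(u_1,x,t)$ consumes and verify that removing the distinguished descent turns it into one well-formed slide in $\Dyck_{n-2}$, while simultaneously confirming that every slide destroying the shape-A or shape-B form indeed falls outside $E_n$.
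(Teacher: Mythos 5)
Your proposal is correct and follows essentially the same route as the paper: after observing that shape-A-to-shape-B covers are chain moves, one writes a shape-A element as $(1,w'_1,0,w''_1,1,0)$ with $w_1 = w'_1 w''_1$ and checks that every non-chain slide projects to a slide on $w_1$. You are in fact more careful than the paper's own proof on one point, namely the slide by $t \ge 2$ steps of the first block after the first return to height $0$, which is neither a chain move nor a move strictly inside $w'_1$ or $w''_1$, but which, as you correctly show, still projects to a genuine cover relation (a slide by $t-1$ steps in $w_1$).
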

\begin{proof}
  The cover relation $u_1 \to u_2$ must be between two elements of
  shape A or two elements of shape B, because cover moves from shape A
  to shape B are inside chains, as noted above. If both $u_1$ and $u_2$
  have shape B, then the statement is clear. Assume therefore that
  both $u_1$ and $u_2$ have shape A.

  In this situation, $u_1$ can be written as $(1,w'_1,0,w''_1,1,0)$,
  where $w_1 = w'_1 w''_1$ is a concatenation of Dyck paths. The cover
  move $u_1 \to u_2$ can not be the sliding along the $0$ between
  $w'_1$ and $w''_1$, because this move is in the chain $E(w_1)$. It
  can therefore only happen inside $w'_1$ or inside $w''_1$. This
  implies that $u_2$ is $(1,w'_2,0,w''_2,1,0)$, with one part
  unchanged and the other part changed by a cover move. Therefore
  $w_2 = w'_2 w''_2$ and this implies that there is a cover move from
  $w_1$ to $w_2$.
\end{proof}

\begin{proposition}
  Let $v$ be an non-empty Dyck path. The top element of
  the unique chain $E(w)$ containing $(v,1,0)$ is the unique minimal
  element among all Dyck paths $(1,w',1,0,0)$ such that
  $(v,1,0) \leq (1,w',1,0,0)$.
\end{proposition}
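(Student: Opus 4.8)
The plan is to identify the candidate $(1,w,1,0,0)$ as the minimum of the set $S=\{(1,w',1,0,0) : (v,1,0)\sm(1,w',1,0,0)\}$ of shape-B upper bounds of $(v,1,0)$, which is the same as being its unique minimal element. First I would dispose of membership in $S$. Since $v$ is non-empty, $(v,1,0)$ is a genuine shape-A element of $E_n$, so by the partition of $E_n$ into the chains $E(\cdot)$ it lies in a unique chain $E(w)$, whose top element is $e_k(w)=(1,w,1,0,0)$. As $E(w)$ is a chain of cover moves, one has $(v,1,0)\sm(1,w,1,0,0)$, so $(1,w,1,0,0)\in S$. It then remains to prove that $(1,w,1,0,0)\sm z$ for every $z\in S$.

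To do this I would project down to $\Dyck_{n-2}$. Fix $z=(1,w',1,0,0)\in S$ and pick any sequence of cover moves realizing $(v,1,0)\sm z$. Every intermediate path $y$ satisfies $y\sm z$, and since $z\in E_n$ and $E_n$ is a lower ideal, all these $y$ lie in $E_n$; because the induced Hasse diagram on $E_n$ is the restriction of that of $\Dyck_n$, this is in fact a sequence of cover moves \emph{inside} $E_n$. Now send each element of $E_n$ to the label of the chain $E(\cdot)$ containing it, giving a map $\pi\colon E_n\to\Dyck_{n-2}$. A cover move internal to a single chain is sent to an equality, whereas a cover move between two distinct chains is sent, by \cref{autre_chaine}, to a cover move in $\Dyck_{n-2}$. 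Hence $\pi$ is monotone along the chosen sequence, and I obtain $w=\pi((v,1,0))\sm\pi(z)=w'$ in $\Dyck_{n-2}$.

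Finally I would lift the inequality $w\sm w'$ back up through the wrapping map $w\mapsto(1,w,1,0,0)$. The key claim is that this map carries cover moves to cover moves: a movable subpath $x$ of $w$ is preceded by a letter $0$, which is still the case inside $(1,w,1,0,0)$, and $x$ is either followed by a $1$ in $w$ (still true after wrapping) or is the last block of $w$, in which case it becomes followed by the wrapper's letter $1$; either way $x$ stays movable with the same value of $N$, and the slide $M$ performed inside the wrapper produces exactly $(1,M(w,x,i),1,0,0)$. Therefore a chain of cover moves from $w$ to $w'$ lifts to a chain from $(1,w,1,0,0)$ to $(1,w',1,0,0)=z$, yielding $(1,w,1,0,0)\sm z$. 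This shows $(1,w,1,0,0)$ lies below every element of $S$ and hence is its minimum, which completes the argument. The one genuinely delicate point is the bookkeeping in this last step, in particular the boundary case where the slid subpath is the final block of $w$ and acquires the wrapper's $1$ as its successor; the downward projection of the middle step, by contrast, is essentially handed to us by \cref{autre_chaine}.
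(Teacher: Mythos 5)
Your proof is correct and follows essentially the same route as the paper: both arguments track an arbitrary chain of cover moves from $(v,1,0)$ up to a shape-B element inside the lower ideal $E_n$ and use Proposition \ref{autre_chaine} to project between-chain moves down to cover moves in $\Dyck_{n-2}$, yielding $w \leq w'$. You additionally spell out the final lifting through $w \mapsto (1,w,1,0,0)$ (checking that movable subpaths stay movable with the same $N$), a step the paper compresses into ``the statement follows''; this is a welcome bit of extra care rather than a different method.
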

\begin{proof}
  Consider any sequence of cover moves from $(v,1,0)$ to some
  $(1,w',1,0,0)$. Note that this takes place entirely inside
  $E_n$. The chosen sequence of cover moves is made either of
  cover moves inside one chain, or of cover moves between chains. By
  \cref{autre_chaine}, the first reached element of shape B
  must be the last element $(1,w'',1,0,0)$ of some chain $E(w'')$ where
  $w \leq w''$. The remaining cover moves are between elements of shape
  B, and therefore $w \leq w'' \leq w'$. The statement follows.
\end{proof}

Let us now use all of this to give a precise description of the set of core intervals. By replacing the bottom
element of such an interval $[u,(1,w',1,0,0)]$ to the top element of its chain $E(w)$, one
gets an interval between elements of shape B, or equivalently an
interval $[w,w']$ in $\Dyck_{n-2}$. The position of $u$ in the chain
$E(w)$ is described by an integer $i$ between $0$ and the number of
blocks of $w$. This defines a map from the set of core intervals
sending $[u,(1,w',1,0,0)]$ to the pair ($[w,w']$, $i$). Conversely, given any
interval $[w,w']$ in $\Dyck_{n-2}$ and any integer $i$ between $0$ and
the number of blocks of $w$, one can recover the full chain $E(w)$ and
pick $u$ by its index in this chain.

To summarize, there is a bijection between core intervals and pairs
made of an arbitrary interval $[w,w']$ and an integer between $0$ and
the number of blocks of $w$.

\section{Counting the intervals}

In this section, we use the previous structural results on intervals to count them.

\begin{theorem}
\label{comptage}
The number of intervals in the poset $\Dyck_n$ is $1$ for $n=0$ and 
\begin{equation}
  3 \frac{2^{n-1} (2n)!}{n!(n+2)!} \quad \text{for}\quad n \geq 1.
\end{equation}
\end{theorem}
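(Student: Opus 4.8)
The strategy is to assemble a system of functional equations for generating series out of the structural results established in the previous sections, and then to solve that system explicitly. The target formula $3 \cdot 2^{n-1}(2n)!/(n!(n+2)!)$ is (as the introduction recalls) Tutte's count of rooted bicubic planar maps, so one expects a catalytic (kernel-method) computation rather than a closed bijection. The main combinatorial inputs are the freeness of the interval monoid $\monoi$ and the bijection for core intervals established at the end of \cref{magie}, which are precisely the tools that let one relate intervals in $\Dyck_n$ to intervals in smaller $\Dyck_m$.

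First I would set up the bookkeeping. Let $f_A$ denote the generating series counting \emph{all} intervals by size (variable $t$), together with the auxiliary refinement by number of blocks in the minimum (variable $s$) already introduced in \eqref{eq_fA_fR}, and let $f_R$ be the series for reduced intervals. Equation \eqref{eq_fA_fR}, coming from \cref{princi_factoriser}, expresses $f_A$ in terms of $f_R$ and a catalytic version of $f_A$. The point of the catalytic variable $s$ is exactly that \cref{princi_factoriser} peels off one principal-ideal factor, changing the number of blocks of the minimum in a controlled way. The second equation comes from the core-interval analysis: the bijection at the end of \cref{magie} identifies core intervals in $\Dyck_{n}$ with pairs (an arbitrary interval $[w,w']$ in $\Dyck_{n-2}$, an integer $i$ between $0$ and the number of blocks of $w$). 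Translated into series, ``choosing $i$ between $0$ and the number of blocks'' is an operator that reads off the $s$-degree, so this contributes a term of the form $t^2 \cdot \frac{f_A - s f_A|_{s=1}}{1-s}$ or a similar finite-difference expression in the catalytic variable, shifted by $t^2$ for the two extra steps. I would combine this with \cref{free_M1}/the freeness of $\monoi$ to relate $f_R$ (reduced intervals, whose minimum is empty or ends in $(1,0)$) to the generating series of monoid generators, which are themselves the core-type intervals plus the one exceptional generator. The outcome should be a single polynomial equation $P(f_A(t,s), f_A(t,1), t, s) = 0$ that is linear (or at worst quadratic) in the catalytic series $f_A(t,s)$, with the characteristic feature of a kernel that vanishes along an algebraic curve $s = S(t)$.

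The central step is then to solve this catalytic equation by the kernel method. I would write the equation in the form $K(t,s)\, f_A(t,s) = (\text{terms involving only } f_A(t,1) \text{ and explicit functions of } t,s)$, locate the small branch $s = S(t)$ of the kernel $K(t,s)=0$ (the branch with $S(0)=0$, say), and substitute it in to cancel the unknown catalytic series, obtaining an equation that determines $f_A(t,1)$ alone. Solving this yields an algebraic expression for $f_A(t,1)$, from which one extracts coefficients. I expect this to reduce, after simplification, to a known algebraic series whose coefficients are exactly Tutte's bicubic-map numbers; the final verification is to confirm that the extracted coefficients agree with the closed form $3 \cdot 2^{n-1}(2n)!/(n!(n+2)!)$, either by matching against Tutte's own functional equation for that sequence or by a direct Lagrange-inversion coefficient extraction.

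\textbf{Main obstacle.} The principal difficulty is getting the catalytic equation exactly right, in particular the correct handling of the number-of-blocks statistic through both \eqref{eq_fA_fR} and the core-interval bijection. The core-interval count involves a \emph{sum} over an index $i$ bounded by a statistic of $w$, which forces the catalytic variable and a finite-difference/divided-difference operator on it; mismatches of an off-by-one kind (the $i=0$ and $i = \text{number of blocks}$ endpoints, and the $t^2$ shift from the two boundary steps of shape B) are where the bookkeeping is most fragile. Once the equation is correctly assembled, the kernel-method solution and the final identification with Tutte's formula are essentially mechanical, though verifying that the resulting algebraic series is \emph{precisely} the bicubic series — rather than merely having the same first several coefficients — requires care, ideally by exhibiting the exact functional equation Tutte used and checking our solution satisfies it.
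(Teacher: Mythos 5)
Your proposal follows essentially the same route as the paper: a catalytic variable $s$ for the number of blocks of the minimum, the three equations coming from the principal-upper-ideal factorisation \eqref{eq_fA_fR}, the freeness of $\monoi$ (relating $f_R$ to the core generators), and the core-interval bijection of \S\ref{magie} (giving the divided-difference term in $s$), followed by solving the resulting catalytic equation and matching against the known algebraic equation for \oeis{A000257}. The only caveat is that the assembled equation is quadratic in $f_A(t,s)$, so the plain linear kernel substitution you describe must be replaced by the Bousquet--M\'elou--Jehanne method (the paper's cited tool), which is the natural generalisation you already anticipate.
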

This formula describes exactly the sequence \oeis{A000257}, whose
first few terms are
\begin{equation*}
1, 1, 3, 12, 56, 288, 1584, 9152, \dots
\end{equation*}

This is also the number of rooted bicubic planar maps on $2n$ vertices
\cite{tutte}, the number of rooted Eulerian planar maps with $n$ edges, the
number of modern intervals in the Tamari lattice on $\Dyck_n$ and the
number of new intervals in the Tamari lattice on $\Dyck_{n+1}$
\cite{chap_slc}. For a simple bijection between these last two kinds of
intervals, see \cite{rognerud}.

The proof uses a recursive description of all the intervals, based on
the previous structural results. The good catalytic parameter turns
out to be the number of blocks of the minimum of the interval.

Let $A_n$ be the set of all intervals in $\Dyck_n$ for $n \geq 0$. Let
$R_n$ be the subset of $A_n$ made of intervals whose minimum has the
shape $(w,1,0)$ (reduced intervals), plus the interval (empty,
empty). Let $C_n$ be the subset of $A_n$ made of intervals whose
minimum has the shape $(w,1,0)$ and whose maximum has the shape
$(1,w,1,0,0)$ (core intervals), plus the interval
$(1,0,1,0),(1,0,1,0)$.

Let $f_A$, $f_R$, $f_C$ be the associated generating series,
with a variable $t$ for the size and a variable $s$ for the number of
blocks in the minimum of the interval.

From the factorisation property of principal upper ideals in \S \ref{princi}, one gets \eqref{eq_fA_fR}, that we repeat here:
\begin{equation}
  f_A = f_R + t (f_A - 1) f_A|_{s=1}.
\end{equation}

Applying the freeness of the monoid of intervals $\monoi$ to the
subset of intervals whose minimum ends with $(1,0)$ gives
\begin{equation}
  \frac{f_R - 1}{s t} = 1 + \frac{f_A - 1}{st} \frac{f_C}{st},
\end{equation}
because the last factor must be a core interval.

From the properties of core intervals obtained in \S \ref{magie}, one gets
\begin{equation}
  f_C = s^2 t^2 \left( 1 + \frac{s f_A-f_A|_{s=1}}{s - 1} \right).
\end{equation}

All together, these three equations give the functional equation
\begin{equation}
  f_A = 1 + s t + s t (f_A - 1) \left( 1 + \frac{s f_A-f_A|_{s=1}}{s - 1} \right) +  t (f_A - 1) f_A|_{s=1}.  % checked and works
\end{equation}

Using the general method of \cite{mbmj} to deduce an algebraic
equation from this kind of functional equation with one catalytic
parameter, one gets (as the unique pertinent factor) the equation
\begin{equation}
  16 g^{2} t^{2} - g (8 t^{2} + 12 t - 1) + t^{2} + 11 t - 1
\end{equation}
for the generating series $g = f_A|_{s=1}$, in which one recognizes
the known algebraic equation for the sequence \oeis{A000257}. This
implies \cref{comptage}.

\smallskip

The first few terms of these series are
\begin{equation*}
  f_A = 1 + st + \left(2s^{2} + s\right)t^{2} + \left(5s^{3} + 5s^{2} + 2s\right)t^{3} + \left(14s^{4} + 21s^{3} + 15s^{2} + 6s\right)t^{4} + \cdots
\end{equation*}
\begin{equation*}
  f_R = 1 + st + 2s^{2}t^{2} + \left(5s^{3} + 3s^{2}\right)t^{3} + \left(14s^{4} + 16s^{3} + 8s^{2}\right)t^{4} + \cdots
\end{equation*}
\begin{equation*}
  f_C = 2s^{2}t^{2} + \left(s^{3} + s^{2}\right)t^{3} + \left(2s^{4} + 3s^{3} + 3s^{2}\right)t^{4} + \cdots
\end{equation*}

\subsection{Refinement of enumerative correspondence}

It follows from \cref{comptage} that the number of intervals in
$(\Dyck_n, \leq)$ is the same as the number of modern intervals in the
Tamari partial order on the same set. In this section, a conjectural refinement of
this equality is proposed.

\smallskip

%% \begin{remark}
%%   The distribution of modern Tamari interval-posets according to their
%%   lower bound (as a binary tree) seems to be the same as the
%%   distribution of sizes of the upper ideals in the dexter
%%   lattice. This refines the expected equality of numbers.
%% \end{remark}

%% \begin{verbatim}
%% N = 4
%% data = [bornes(T) for T in TamariIntervalPosets(N) if T.is_modern()]
%% sorted([sum(1 for u in data if u[0]==t) for t in BinaryTrees(4)])
%% [ 1, 1, 1, 1, 2, 2, 2, 3, 4, 4, 5, 7, 9, 14 ]
%% P = DexterLattice(N)
%% sorted([len(P.principal_order_filter(x)) for x in P])
%% [ 1, 1, 1, 1, 2, 2, 2, 3, 4, 4, 5, 7, 9, 14 ]
%% \end{verbatim}

%% More refinement, this restricts on some subsets with more parameters:
%% \begin{verbatim}
%% N = 4
%% BT = BinaryTree
%% data = [T.upper_binary_tree() for T in TamariIntervalPosets(N) if T.is_modern()]
%% sorted([(len(t.to_ordered_tree_left_branch()),sum(1 for u in data if u==BT([None,t]))) for t in BinaryTrees(N-1)])
%% [(1, 9), (1, 14), (2, 4), (2, 7), (3, 4)]

%% DexN = DexterLattice(N)
%% sorted([(zeros_finaux(x)-1,len(DexN.principal_order_filter(x[:-1]+(1,0,0)))) for x in DexterLattice(N-1)])
%% [(1, 9), (1, 14), (2, 4), (2, 7), (3, 4)]
%% \end{verbatim}

Let us define a map $\kappa$ from Dyck paths of size $n$ to planar
rooted binary trees with $n$ inner vertices, by induction on $n$. When
$n=0$, the empty Dyck path is sent to the binary tree that is just one
leaf. If the Dyck path $w$ is block-indecomposable, it can be written
$w = (1,w',0)$ for some smaller Dyck path $w'$. Then $\kappa(w)$ is
obtained from $\kappa(w')$ by adding one inner vertex (and two leaves) on the rightmost
leaf. Otherwise, cutting $w$ before the last block defines two
smaller Dyck paths $w_1$ and $w_2$ such that $w = w_1 w_2$. Then
$\kappa(w)$ is defined by grafting the root of $\kappa(w_1)$ on the
second leaf from the right of $\kappa(w_2)$.

\begin{proposition}
 The map $\kappa$ is a bijection.
\end{proposition}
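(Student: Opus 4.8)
The plan is to prove that $\kappa$ is a bijection by exhibiting that it preserves the natural recursive structure on both sides and hence can be inverted. Since $\kappa$ is defined by induction on the size $n$, and both Dyck paths of size $n$ and planar rooted binary trees with $n$ inner vertices are counted by the Catalan number $C_n$, it suffices to prove that $\kappa$ is injective; surjectivity then follows by the equality of (finite) cardinalities. I would therefore focus on constructing an explicit inverse map $\kappa^{-1}$, reading off from the structure of a binary tree $T$ with $n$ inner vertices which case of the recursive definition produced it.

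The key observation driving the inverse is the dichotomy in the definition of $\kappa$: either $w$ is block-indecomposable, in which case $\kappa(w)$ is built by adding an inner vertex on the \emph{rightmost leaf} of a smaller tree; or $w$ decomposes as $w_1 w_2$ with $w_2$ the last block, in which case $\kappa(w)$ is obtained by grafting $\kappa(w_1)$ at the \emph{second leaf from the right} of $\kappa(w_2)$. To invert, given a nonempty tree $T$ I would examine the rightmost branch (the path from the root following right-children). The crucial claim is that the configuration along this rightmost branch records exactly whether the preimage was block-indecomposable and, if not, where the last block was cut. Concretely, I would look at the inner vertex adjacent to the rightmost leaf and determine whether its sibling subtree is trivial (a single leaf) or not: this distinguishes the two cases and allows one to peel off either one inner vertex (recovering $w'$ with $w=(1,w',0)$) or the grafted subtree $\kappa(w_1)$ together with its host $\kappa(w_2)$.

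The main steps, in order, would be: first, verify that the two construction cases produce trees whose rightmost-branch data are disjoint, so that no tree arises from both cases — this guarantees that the inverse is well-defined and that $\kappa$ is injective on each size. Second, check the base case $n=0$ separately (the single leaf), and confirm that the recursion strictly decreases $n$ so the induction is well-founded. Third, assemble these into an inductive proof that $\kappa^{-1}(\kappa(w)) = w$ for all $w$, whence $\kappa$ is injective. Fourth, invoke $|\Dyck_n| = C_n$ and the classical count of binary trees with $n$ inner vertices by $C_n$ to upgrade injectivity to bijectivity.

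The step I expect to be the main obstacle is the disjointness-of-cases claim, i.e.\ proving that a block-indecomposable $w$ and a decomposable $w$ can never yield the same tree. This requires a careful analysis of what "adding a vertex on the rightmost leaf" versus "grafting on the second leaf from the right" does to the local shape near the rightmost leaf of the resulting tree, and showing these local shapes are mutually exclusive. Once that local combinatorial distinction is pinned down, the rest is a routine induction; so I would invest the care there, perhaps drawing the two generic local pictures to make the dichotomy transparent before writing the formal argument.
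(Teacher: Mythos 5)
Your proposal is correct and follows essentially the same route as the paper: the paper also inverts $\kappa$ by induction, looking at the parent $v$ of the rightmost leaf and splitting on whether $v$ is attached to two leaves (peel off $v$ to recover the block-indecomposable case) or not (cut along the left branch of $v$ to recover the two concatenated factors). Your extra cardinality argument via Catalan numbers is harmless but unnecessary once the two-sided inverse is in hand, and your "disjointness of cases" concern is exactly the observation that the left sibling of the rightmost leaf is a single leaf precisely in the block-indecomposable case.
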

\begin{proof}
  It is enough to be able to build the inverse by induction. Let us
  simply sketch the construction. Consider a planar binary tree
  $t$. Let $v$ be the parent vertex of its rightmost leaf.

  If $v$ is directly connected to two leaves, then one can remove $v$
  to get another binary tree $t'$, apply the inverse of $\kappa$ by
  induction to get a Dyck path $w'$ and define $w = (1,w',0)$. Clearly
  $\kappa(w) = t$.

  Otherwise, cut the tree $t$ along the left branch of the vertex
  $v$. This gives two trees $t_1$ (above the cut) and $t_2$ (below the
  cut). Applying the inverse of $\kappa$ by induction gives two Dyck
  paths $w_1$ and $w_2$. Define $w$ to be their concatenation
  $w_1 w_2$. Clearly again $\kappa(w) = t$.
\end{proof}

\begin{proposition}
  The bijection $\kappa$ from Dyck paths to planar rooted binary trees
  is such that, when $t = \kappa(w)$, the number of vertices on
  the rightmost branch of $t$ is the number of final zeros of $w$.
\end{proposition}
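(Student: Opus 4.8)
The plan is to prove the equality by induction on the size $n$, following exactly the three branches of the recursive definition of $\kappa$. For a Dyck path $w$ I write $z(w)$ for the number of trailing zeros of $w$, and for a planar binary tree $t$ I write $r(t)$ for the number of inner vertices lying on the rightmost branch, i.e. the number of right-steps from the root to the rightmost leaf. The goal is then to show $r(\kappa(w)) = z(w)$ for every $w$; this is the reading of ``number of vertices on the rightmost branch'' that is consistent with the base case, where $w$ is empty, $\kappa(w)$ is a single leaf with no inner vertex, and $z(w) = 0$, so both sides vanish.

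For the block-indecomposable case I would write $w = (1,w',0)$. On the path side, since $w'$ is either empty or ends with $0$, appending the final $0$ after the block $w'$ extends the trailing run of zeros by exactly one, giving $z(w) = z(w') + 1$. On the tree side, $\kappa(w)$ is obtained from $\kappa(w')$ by replacing its rightmost leaf with a new inner vertex carrying two leaves; this appends exactly one inner vertex at the bottom of the rightmost branch, so $r(\kappa(w)) = r(\kappa(w')) + 1$. The induction hypothesis $r(\kappa(w')) = z(w')$ then closes this case.

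For the decomposable case I would write $w = w_1 w_2$ with $w_2$ the last block, so that $w_1$ is nonempty and $w_2$ is a nonempty block-indecomposable path. Because $w_2$ touches the horizontal axis only at its two ends, the trailing run of zeros of $w$ is entirely contained in $w_2$ and coincides with that of $w_2$, whence $z(w) = z(w_2)$. On the tree side, $\kappa(w)$ is obtained by grafting $\kappa(w_1)$ onto the second-from-right leaf of $\kappa(w_2)$. The step I expect to require genuine care is verifying that this grafting never perturbs the rightmost branch: if $v$ denotes the lowest inner vertex on the rightmost branch of $\kappa(w_2)$, then the right child of $v$ is the rightmost leaf, whereas the second-from-right leaf is the rightmost leaf of the \emph{left} subtree of $v$ and therefore lies strictly off the rightmost branch. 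Here one uses that $\kappa(w_2)$ has at least one inner vertex, which holds since $w_2$ is a nonempty block, so that the notion of ``second-from-right leaf'' makes sense and $v$ exists. Granting this, grafting leaves the rightmost branch of $\kappa(w_2)$ intact, so $r(\kappa(w)) = r(\kappa(w_2)) = z(w_2) = z(w)$ by induction, completing all three cases and hence the proof.
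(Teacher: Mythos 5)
Your proof is correct and follows the same route as the paper, namely induction on the size with a case check for each of the two recursive steps defining $\kappa$; the paper merely asserts that both steps ``can be readily checked'' to preserve the property, while you supply the details (including the useful observation that the second-from-right leaf of $\kappa(w_2)$ lies off the rightmost branch, so grafting there leaves that branch intact).
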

\begin{proof}
  By induction on $n$. This is obvious for $n=0$. The
  two possible steps (for block-indecomposable $w$ or otherwise) in
  the inductive definition of the bijection $\kappa$ do preserve this
  property, as can be readily checked.
\end{proof}

\begin{conjecture}
  The bijection $\kappa$ from Dyck paths to planar rooted binary trees
  is such that, when $t = \kappa(w)$, the number of modern intervals
  with minimum $t$ in the Tamari lattice is the size of the dexter
  upper ideal with minimum $w$.
\end{conjecture}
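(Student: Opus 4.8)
The plan is to prove the identity recursively, using the bijection $\kappa$ as a dictionary between a decomposition of dexter upper ideals and a decomposition of modern Tamari intervals by their minimum. Write $a(w) = |\up(w)|$ for the right-hand side and $b(t)$ for the number of modern intervals with minimum $t$, so the claim is $a(w) = b(\kappa(w))$. Summing over all $w \in \Dyck_n$, the two sides already agree by \cref{comptage}: each dexter interval $[w,u]$ is counted exactly once in $\sum_w a(w)$, and the modern intervals are enumerated by the very same formula. So the conjecture is a refinement of a known global identity, and the natural target is a refinement of the enumeration that remembers the minimum.

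On the dexter side the engine is \cref{princi_factoriser}. Whenever $w$ admits a strip, $w=(u,1,v,1,0,0,0^k)$, one gets $a(w)=a(u')\,a(v)$ with $u'=(u,1,0,0^k)$; iterating the removal of strips reduces $a(w)$ to base values, which occur exactly when $w$ is empty or ends in $(1,0)$, in which case (by the level-decomposition with $k=0$) the last block of $w$ is $(1,0)$. The two preceding propositions on $\kappa$ translate these features to the tree side: since the number of final zeros of $w$ equals the number of vertices on the rightmost branch of $\kappa(w)$, the condition \emph{$w$ admits a strip} corresponds to \emph{the rightmost branch of $\kappa(w)$ has at least two vertices}, i.e. the root of $\kappa(w)$ does not have a leaf as right child. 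The first task is then to check that the strip decomposition $w \to (u',v)$ is carried by $\kappa$ to a grafting decomposition $\kappa(w) \to (\kappa(u'),\kappa(v))$ along the rightmost branch, compatibly with the two cases of the defining recursion of $\kappa$.

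The heart of the matter is the matching multiplicative recursion for $b$, namely that the number of modern intervals with minimum $\kappa(w)$ factors as $b(\kappa(u'))\,b(\kappa(v))$ along the same grafting. This is where the theory of Tamari intervals and their interval-posets from \cite{chap_slc} must enter, since modern intervals are not developed intrinsically in the present paper. Concretely I would encode a modern interval $[t,t']$ by its interval-poset, record how fixing the minimum $t=\kappa(w)$ constrains that poset, and show that the grafting on the rightmost branch splits the interval-poset into two independent parts, each ranging over the modern intervals with the corresponding smaller minimum $\kappa(u')$ and $\kappa(v)$. The base case, where $w$ ends in $(1,0)$, must be settled separately; as the values of $a$ there (e.g. $1,2,3,5,\dots$ for the reduced minima computed directly) are not simple functions of a smaller $\up$, this amounts to an explicit bijection between $\up(w)$ and the modern intervals with that prescribed minimum.

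The main obstacle is precisely this modern-interval factorization together with the base case: on the dexter side \cref{princi_factoriser} hands us the product structure for free, whereas the analogous statement for modern intervals with a prescribed minimum is not available off the shelf and seems to require genuinely new combinatorics of interval-posets. An appealing alternative, and perhaps the conceptually correct route, is to return to the motivation of the introduction, where the dexter order was discovered as the order induced on the bottom elements of the cells of the poset of Tamari intervals indexed by binary trees; making that identification precise would exhibit $\up(w)$ directly as the set of modern intervals with minimum $\kappa(w)$. However, this requires proving the coincidence with that former description, which the paper explicitly chose not to establish, and that is the real difficulty hiding behind the conjecture.
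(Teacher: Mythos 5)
This statement is left as an open conjecture in the paper: no proof is given there, so there is nothing to compare your argument against, and the question is only whether your proposal closes the gap on its own. It does not. What you have written is a plausible strategy, and its first layer is sound: the global identity $\sum_w a(w) = \sum_t b(t)$ does follow from \cref{comptage}, the dexter-side factorization $a(w) = a(u')\,a(v)$ is exactly \cref{princi_factoriser}, and the translation of ``$w$ admits a strip'' into ``the rightmost branch of $\kappa(w)$ has at least two inner vertices'' is correct via the proposition on final zeros. But the two steps that would actually prove the conjecture are precisely the ones you defer: (a) that the strip decomposition $w \mapsto (u',v)$ is transported by $\kappa$ to an identifiable grafting operation on trees, and (b) that the number of modern Tamari intervals with prescribed minimum satisfies the matching multiplicative recursion $b(\kappa(w)) = b(\kappa(u'))\,b(\kappa(v))$ together with the base case for paths ending in $(1,0)$. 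Neither is established here, and (b) in particular is not a routine verification: modern intervals with a fixed minimum are not treated in this paper at all, and nothing in \cite{chap_slc} is cited that would give the required splitting of interval-posets along the rightmost branch. You acknowledge this yourself, which is to your credit, but it means the proposal is a research plan rather than a proof.

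Two further points of caution if you pursue this. First, a Dyck path can admit several strips, so to run an induction you must fix a canonical strip (for instance the one ending at the second-to-last letter of the final descent, as in the paper's lemma characterising strip-free paths) and check that your tree-side operation matches that specific choice; \cref{princi_factoriser} gives an isomorphism for each strip but no preferred one. Second, the base case is not small: the paths ending in $(1,0)$ form a positive fraction of all Dyck paths, and their upper ideals are not products of smaller ones, so the ``explicit bijection'' you invoke there is essentially the whole difficulty in miniature. Your closing remark is the right diagnosis: the conceptually clean route is to prove that the dexter order coincides with the order induced on the bottom elements of the cells in the poset of Tamari intervals, which the paper deliberately leaves unproved; until that identification (or an equivalent amount of work on modern intervals) is carried out, the conjecture remains open.
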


%% \begin{verbatim}
%%  N=3
%%  data1=[bornes(T) for T in TamariIntervalPosets(N) if T.is_modern()]
%%  dico1={T:[u[1] for u in data1 if u[0]==T] for T in BinaryTrees(N)}
%%  sorted([(a,b) for a,b in dico1.items()],key=lambda ab:-len(ab[1]))
%%  data2=DexterLattice(N).relabel(lambda e:DyckWord(e[:-1])).relations()
%%  dico2={T:[u[1] for u in data2 if u[0]==T] for T in DyckWords(N)}
%%  sorted([(a,b) for a,b in dico2.items()],key=lambda ab:-len(ab[1]))
%%  {theta_inverse(a):len(b) for a, b in dico2.items()}=={a:len(b) for a, b in dico1.items()}
%% \end{verbatim}

\begin{figure}
\centering
\begin{tikzpicture}[scale=.4]
  \draw[gray,very thin] (0, 0) grid (16, 3);
  \draw[rounded corners=1, color=black, line width=2] (0, 0) -- (1, 1) -- (2, 2) -- (3, 3) -- (4, 2) -- (5, 3) -- (6, 2) -- (7, 1) -- (8, 0) -- (9, 1) -- (10, 2) -- (11, 3) -- (12, 2) -- (13, 1) -- (14, 0) -- (15, 1) -- (16, 0);
\end{tikzpicture}
\includegraphics[width=3cm]{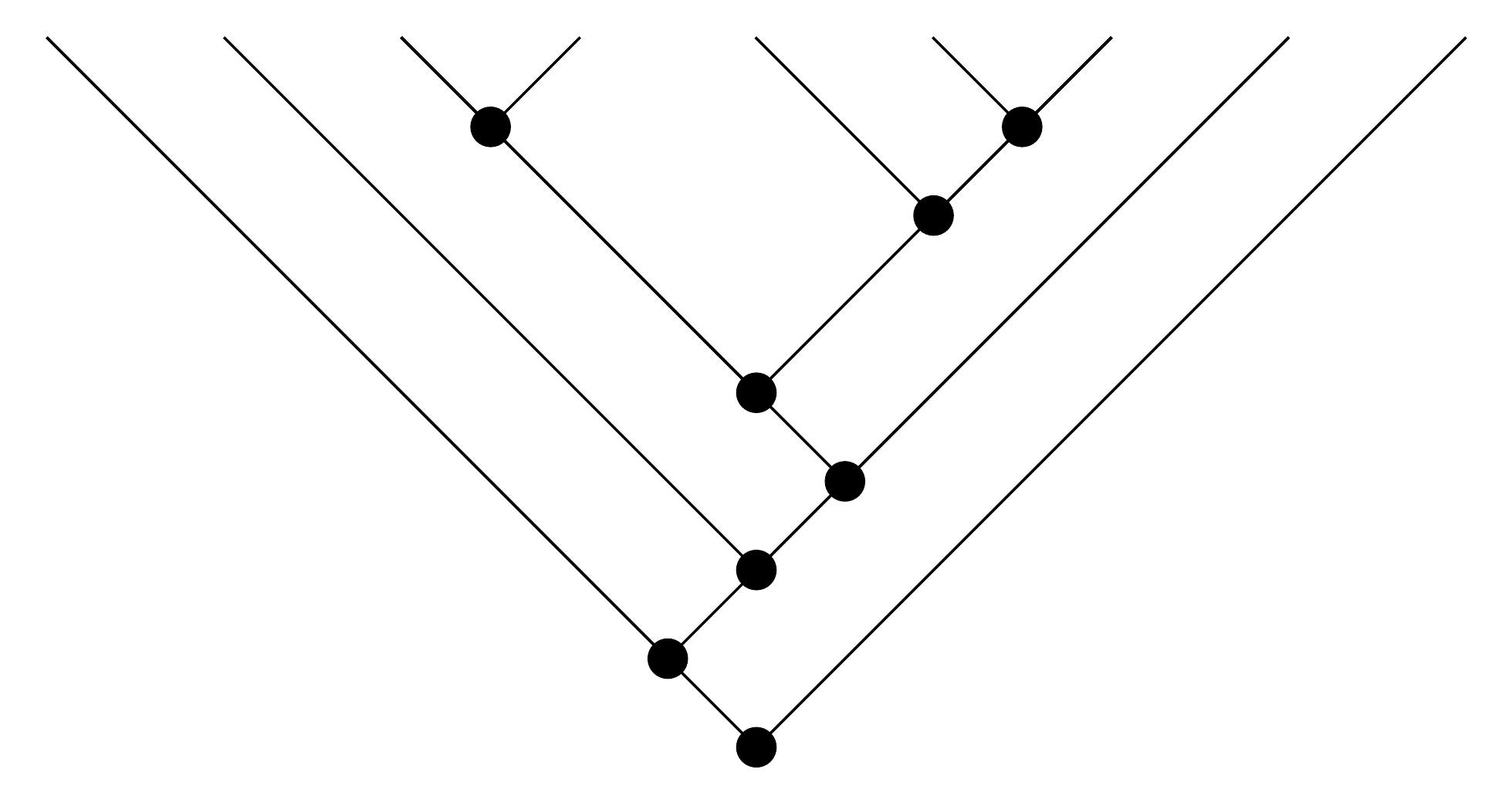}
\caption{Illustration of the bijection $\kappa$}
\end{figure}

\section{Semilattice property}

The aim of this section is to prove that $\Dyck_n$ is a lower
semilattice, namely any two elements have a meet.

\subsection{Some technical lemmas}

This subsection proves several useful lemmas.

\begin{lemma}
  \label{forcer_un}
  Let $u \in \Dyck_n$, with a letter $0$ at position $i+1$. Consider
  the set $R_i(u)$ of elements $v \in \Dyck_n$ such that $u \leq v$, the
  first $i$ letters of $v$ are the first $i$ letters of $u$ and $v$
  has a letter $1$ at position $i+1$.

  Either $R_i(u)$ is empty or $R_i(u)$ has a unique minimal element.
\end{lemma}
This will follow from \cref{sub_forcer_un}.
% j'ai verifie avec l'ordi et c'est ok

Keeping the same notations, let us denote by $p$ the prefix of $u$ before
position $i+1$. Then $u$ has a unique expression of the form
\begin{equation}
  \label{frozen_pic}
  u = p \, 0^{\ell} \, X_0 0^{k_0} \, X_1 0^{k_1} \, \dots X_r 0^{k_r} \, X_{r+1},
\end{equation}
where for $i \leq r$ each $X_i$ is a non-empty Dyck path, $\ell >0$,
all $k_i > 0$ and the final $X_{r+1}$ is any Dyck path (possibly
empty). This expression is obtained by starting after the prefix and
repeatedly do the following : go down along the $0$ until reaching a
point before the letter $1$ (or final), then move on the path until reaching the
first point at the same height that is followed by a letter $0$ (or final).

\medskip

Each of the $X_i$ is made of one or several blocks. Such a block is
\textit{movable} if it can be slided without changing the prefix, and
\textit{frozen} otherwise.

In each $X_i$ for $i \leq r$, all the blocks but the last one are
movable. In the last Dyck path $X_{r+1}$, all the blocks are movable.

If there is no movable block in $u$, the set $R_i(u)$ in the statement
of \cref{forcer_un} will be empty. Indeed any cover move not changing
the prefix will keep the shape of the expression \eqref{frozen_pic}
fixed, so that no block becomes movable and $X_0$ can never be slided.

Let us therefore from now on assume the contrary. In this case, let us
denote by $\rise_i(u)$ the element obtained from $u$ by sliding the
first subpath in the first $X_j$ that contains a movable block, up to
the height of the last block of the previous $X_{j-1}$ or up to the end
of the prefix of $u$ if $j = 0$. In the resulting Dyck path, there is
a movable block inside $X_{j-1}$ if $j > 0$.

\begin{lemma}
  \label{sub_forcer_un}
  Let $u \in \Dyck_n$, with a letter $0$ at position $i+1$. Let
  $v \in \Dyck_n$ such that $u \leq v$, the first $i$ letters of $v$
  are the first $i$ letters of $u$ and $v$ has a letter $1$ at
  position $i+1$. Then $\rise_i(u) \leq v$.
\end{lemma}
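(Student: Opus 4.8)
The plan is to prove that $\rise_i(u)$ is a lower bound, in the dexter order, for every $v$ satisfying the hypotheses, by induction on the length of a chain of cover moves from $u$ to $v$. The starting point is a structural fact that keeps the whole analysis on a fixed prefix: every cover move slides a subpath to the north-west and hence weakly increases all heights, so the height function is non-decreasing along any chain. Thus from a chain $u = u_0 \to \dots \to u_m = v$ one gets $h_{u}(t) \le h_{u_\ell}(t) \le h_{v}(t)$ for every $t$ and every $\ell$. Since $u$ and $v$ agree on their first $i$ letters, $h_u(t) = h_v(t)$ for $t \le i$, so every intermediate $u_\ell$ also has those heights, i.e.\ shares the prefix $p$. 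Consequently each cover move in the chain acts only after position $i$ and respects the expression \eqref{frozen_pic}.

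Next I would isolate the move realizing $\rise_i(u)$ and its forcing property. By construction $u \to \rise_i(u)$ is the single cover move that slides the first movable block $Y$ (the first block of the first $X_j$ carrying a movable block) across the whole preceding run of zeros, so that $Y$ lands at the base height of $X_{j-1}$, or directly after $p$ when $j=0$. The crucial observation is a uniqueness statement for the first step towards the front: a single cover move from $u$ can create a letter $1$ at position $i+1$ only when $j = 0$, and in that case it must slide $Y$ across the initial run $0^{\ell}$, so it is exactly $\rise_i(u)$; when $j>0$ no single move touches position $i+1$, and $\rise_i(u)$ is precisely the move that makes a block of $X_{j-1}$ movable, i.e.\ the first step of the cascade eventually needed to reach the front.

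Now the induction on $m$. Consider the first move $u \to u_1$. If $u_1$ already has a $1$ at position $i+1$, the uniqueness statement forces $u_1 = \rise_i(u)$, and we are done. Otherwise $u_1$ still has a $0$ at position $i+1$ and shares the prefix, so $v$ again satisfies the hypotheses relative to $u_1$, and the inductive hypothesis applies to the shorter chain $u_1 \to \dots \to v$, giving $\rise_i(u_1) \sm v$. It then remains to transport $\rise_i(u)$ across the first move: to deduce $\rise_i(u) \sm v$ from $u \to u_1$ and the relation just obtained.

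This transport step is the main obstacle: it is a local confluence between the forced rise $\sigma\colon u \to \rise_i(u)$ and the arbitrary first cover move $\tau\colon u \to u_1$. When $\tau$ acts on a subpath disjoint from $Y$ and from the run of zeros that $Y$ crosses, $\sigma$ and $\tau$ commute and one obtains $\rise_i(u) \sm \tau(\rise_i(u)) = \rise_i(u_1) \sm v$. The delicate situations are those in which $\tau$ interacts with $Y$: if $\tau$ is a \emph{partial} lift of $Y$, then $Y$ becomes frozen in $u_1$, the decomposition \eqref{frozen_pic} reorganises, and $\rise_i(u_1)$ no longer lifts $Y$, so it need not dominate $\rise_i(u)$. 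I would resolve this by tracking the forced material rather than the formula for $\rise_i$: I would show that any chain from $u_1$ to $v$ must, before a $1$ can appear at position $i+1$, bring $Y$ up to the base height of $X_{j-1}$, and that the first element along the chain where this occurs lies both above $\rise_i(u)$ and below $v$. Proving this claim that \emph{the forced block must travel at least as far as $\sigma$ sends it}, with the bookkeeping of refreezing and of the reorganised decomposition, is the technical heart of the argument; the disjoint and nested sub-cases are then routine. Finally, iterating $\rise_i$ until a $1$ reaches position $i+1$ produces a distinguished element of $R_i(u)$ which, by the lemma, lies below every element of $R_i(u)$, yielding the unique minimal element asserted in \cref{forcer_un}.
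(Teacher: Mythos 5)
Your skeleton matches the paper's: an induction that climbs from $u$ toward $v$ one cover move at a time, a verification that all intermediate elements keep the prefix $p$, the observation that $\rise_i$ commutes with cover moves that do not interact with the forced block, and the identification of the genuinely hard case, namely a first cover move that slides the first block of $X_j$ only part of the way (or past the target height), thereby freezing it and reorganising the decomposition \eqref{frozen_pic}. Your preliminary reductions are correct: the height-monotonicity argument for prefix preservation is valid, and your uniqueness claim for a single cover move that places a $1$ at position $i+1$ checks out (such a move must slide the first block of $X_0$ across the whole run $0^{\ell}$, and that block is movable only when $j=0$).

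However, there is a genuine gap: the case you correctly single out as ``the technical heart of the argument'' is only announced, not proved. You write that you \emph{would} show that any chain from $u_1$ to $v$ must eventually bring the displaced block $Y$ up to the base height of $X_{j-1}$, and that the first element where this happens lies above $\rise_i(u)$ and below $v$ --- but no argument is given for either assertion, and this is precisely where the work lies. The paper resolves it constructively: when $B$ is slided too low, it applies $\rise_i$ repeatedly to $u'$ until the operator slides $B$ itself again, obtaining an element $u''$ with $u'' \leq v$ by the induction hypothesis and $\rise_i(u) \leq u''$ by replaying the same slidings on $\rise_i(u)$; when $B$ is slided too high it uses a variant operator acting on the second available movable block and exhibits an explicit cover move $\bar{u} \to u''$. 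Note also that your induction is on the length of one chosen chain from $u$ to $v$, whereas this resolution requires applying the induction hypothesis to elements such as $\rise_i(u')$ or its iterates, which need not lie on that chain; the paper's induction on the poset element $u$ itself (well-founded since the poset is finite) gives the stronger hypothesis needed. As it stands, your argument establishes the easy commuting cases but leaves the crucial confluence claim unjustified, so the proof is incomplete.
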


\begin{proof}
  The proof will be by decreasing induction on $u$.

  Because $u \leq v$, $v$ must be greater than at least one element
  that covers $u$ and does not change the prefix. If
  $\rise_i(u) \leq v$, the statement holds directly.

  The other possible such cover moves can be classified into several types:
  \begin{enumerate}[label=(\arabic*)]
  \item strictly inside one of the blocks of $X_k$ with $k \leq j$,
  \item one block of $X_j$ is slided,
  \item somewhere on the right of $X_j$.

  \end{enumerate}

  Let first assume that there is a cover move $u \leq u'$ of type (1)
  with $u' \leq v$. In $u'$, one finds the same first movable block
  as in $u$. By induction, one also has $\rise_i(u') \leq v$. There
  is a sequence of cover moves $u \leq u' \leq \rise_i(u')$. But these
  cover moves commute, so that $\rise_i(u) \leq \rise_i(u')$.

  Let us now assume that there is a cover move $u \leq u'$ of type (3)
  with $u' \leq v$. In $u'$, one finds the same first movable block
  as in $u$. By induction, one also has $\rise_i(u') \leq v$. There
  is a sequence of cover moves $u \leq u' \leq \rise_i(u')$. But these
  cover moves also commute, so that again
  $\rise_i(u) \leq \rise_i(u')$.

  There remains to assume that there is a cover move $u \leq u'$ of type
  (2) with $u' \leq v$. If the slided block is not the first block or
  the second block of $X_j$, then one can argue as in the preceding
  case, because the first movable block is not affected.

  If the slided block is the second block of $X_j$, then $X_j$ must
  contain at least $3$ blocks, for otherwise the second block is not
  movable.  Then one can check directly that there is a cover
  move $\rise_i(u) \to \rise_i(u')$.

  If the slided block is the first block of $X_j$, it can either be
  slided too high or too low, compared to the exact height of the end
  of $X_{j-1}$ or of the end of the prefix of $u$. In both cases, it
  becomes (part of) a frozen block. The existence of $v$ then forces
  that there is another movable block in $X_k$ for some $k \geq j$.

  If the first block $B$ of $X_j$ is slided too low in $u'$, one can
  apply repeatedly $\rise_i$ to $u'$ until this operator is sliding
  again the slided block $B$ itself. Let us call $u''$ the
  result. Then by induction, one has $u'' \leq v$. One can also see
  that $\rise_i(u) \leq u''$ by repeating on $\rise_i(u)$ all the same
  slidings performed from $u'$ to $u''$.

  The first block $B$ of $X_j$ can only be slided too high in $u'$ if
  there exists $X_{j-1}$, because otherwise the prefix would
  change. In this case, one can apply repeatedly $\rise_i$ to $u'$
  until this operator is making movable the block containing the
  slided block $B$ itself. Let us call $u''$ the result. Then by
  induction, one has $u'' \leq v$. On the other hand, one can apply an
  operator similar to $\rise_i$ but using the second available moving
  block on $\rise_i(u)$ until making the slided block $B$ movable
  again. In other words, one replays on $\rise_i(u)$ all the sliding
  moves performed on $u'$. Let us call the result $\bar{u}$. Then
  there is a cover move $\bar{u} \to u''$.

  In all cases, one obtains that $\rise_i(u) \leq v$.
\end{proof}

Let us now give the proof of \cref{forcer_un}. This is just an
iteration of \cref{sub_forcer_un}, where the pair $(u,v)$ is replaced
by the pair $(\rise_i(u),v)$ that satisfies the same hypotheses as
long as $j > 1$. At each step the index $j$ containing the first
movable block is decreasing by one. In the last step, when $j=1$, the
element $\rise_i(u)$ becomes an element of $R_i(u)$ which is smaller
than any element of $R_i(u)$.

\medskip

For $v,w$ in $\Dyck_n$, let $M(v,w)$ be the set of Dyck paths that are
smaller than both $v$ and $w$. This set is never empty.

\begin{lemma}
  \label{meet_with_prefix}
  Let $v$ and $w$ in $\Dyck_n$ with a common prefix of length
  $i$. Then for every element $u$ in $M(v,w)$, there exists $u'$ in
  $M(v,w)$ such that $u \leq u'$ and $u'$ has the same prefix of
  length $i$ as $v$ and $w$.
\end{lemma}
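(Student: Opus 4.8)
The plan is to push $u$ upward in finitely many steps, extending at each step the length over which $u$ agrees with the common prefix, while keeping $u$ inside $M(v,w)$. If $u$ already shares the prefix of length $i$, take $u'=u$. Otherwise, let $P$ denote the common prefix and let $p+1\leq i$ be the first position at which $u$ differs from $P$.

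First I would record the elementary fact that comparability forces pointwise domination of heights: every cover move slides a subpath to the north-west, which weakly raises the height at every position and leaves all other heights unchanged. Hence $u\leq v$ gives $\operatorname{ht}_u\leq\operatorname{ht}_v$ at each position, and likewise $u\leq w$. Since $u$ and $P$ coincide up to position $p$, they have equal height there; were $u$ to carry the letter $1$ at position $p+1$ while $P$ carries $0$, then $u$ would be strictly higher at position $p+1$ than $v$ (which agrees with $P$), contradicting height domination. Therefore $u$ has the letter $0$ at position $p+1$, while $P$, and hence both $v$ and $w$, has the letter $1$ there.

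This is precisely the configuration of \cref{forcer_un} with the distinguished index $p$ in place of $i$. I would then note that both $v$ and $w$ lie in $R_p(u)$: each is $\geq u$, agrees with $u$ on the first $p$ letters (as $u$ matches $P$ there and $v,w$ match $P$ throughout their first $i>p$ letters), and carries a $1$ at position $p+1$. Thus $R_p(u)$ is non-empty, so by \cref{forcer_un}, realised through the iteration of \cref{sub_forcer_un}, it has a least element $u_1$, which in particular satisfies $u_1\leq v$ and $u_1\leq w$. Consequently $u_1\in M(v,w)$, $u\leq u_1$, and $u_1$ now agrees with $P$ on the first $p+1$ letters.

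Finally I would iterate, replacing $u$ by $u_1$: the new element still lies in $M(v,w)$, and its first disagreement with $P$ (if any) occurs strictly later, still within the first $i$ positions and again of the forced form ``$0$ in the path against $1$ in $P$'' by the same height argument. Since the length of agreement strictly increases and is bounded by $i$, after finitely many steps one obtains $u'\in M(v,w)$ with $u\leq u'$ agreeing with the common prefix on all of its first $i$ letters. I expect the main obstacle to be the second step: confirming that the first discrepancy must be a descent of $u$ opposite an ascent of $P$, so that \cref{forcer_un} genuinely applies. This rests entirely on the pointwise height domination coming from the fact that cover moves are upward slides.
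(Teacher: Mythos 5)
Your proof is correct and follows essentially the same route as the paper: the paper argues by induction on the prefix length, observes that the only nontrivial case is a letter $0$ in the current candidate against a letter $1$ in the common prefix (which you justify more explicitly via pointwise height domination under north-west slides), and then applies \cref{forcer_un} to the nonempty set $R_i$ containing $v$ and $w$, taking its unique minimal element as the next candidate. Your iterative phrasing starting from the first disagreement is just a cosmetic variant of that induction.
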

\begin{proof}
  By induction on $i$. This is true for $i=1$, for $u'=u$. Assume now
  that $v$ and $w$ have a common prefix of length $i+1$ and let $u'_i$
  be defined by induction hypothesis for the shorter common prefix of
  length $i$. If the last letter in the common prefix of length $i+1$
  is the same as the letter of $u'_i$ at position $i+1$, one can take
  $u'$ to be $u'_i$.

  Because $u'_i \in M(v,w)$, there remains only the case where the
  letter of $u'_i$ at position $i+1$ is $0$ and the last letter in the
  common prefix of length $i+1$ of $v$ and $w$ is $1$. Let us apply
  \cref{forcer_un} to $u'_i$. The set $R_{i}(u'_i)$ is not empty as
  it contains $v$ and $w$. One can therefore take $u'$ to be its unique
  minimal element.
\end{proof}

\subsection{Proof of semilattice property}

Let us start with more lemmas.

Let $w$ be a Dyck path such that the $i$-th step in $w$ is $1$ and
this step does not start at height $0$.  On the right from the start
$i_0$ of $i$-th step, move on the path $w$ until meeting a point $i_1$
at the same height and followed by a $0$ step. This must happen, as
the height of $i_0$ is not zero. Between $i_0$ and $i_1$, there is in
$w$ a non-empty sequence of subpaths $x_1, \dots, x_N$.

Let us define another Dyck path $\des_i(w)$ by sliding down in $w$ the
subpath $x_N$ as much as possible, namely by exchanging $x_N$ with all
the consecutive $0$ steps on its right. Then there is a covering move
$\des_i(w) \to w$ that is sliding up the subpath $x_N$.

\begin{lemma}
  \label{sub_force_0}
  Let $w$ be a Dyck path such that the $i$-th step in $w$ is $1$ and
  this step does not start at height $0$. Let $u \leq w$ such that the
  $i$-th letter of $u$ is $0$ and $v$ and $w$ share the same prefix
  before the $i$-th letter. Then $u$ is smaller than $\des_i(w)$.
\end{lemma}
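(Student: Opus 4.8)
My plan is to imitate the structure of the proof of \cref{sub_forcer_un}, arguing by decreasing induction on $u$ inside the interval below $w$, i.e.\ assuming the statement for every element strictly above $u$ that still satisfies the hypotheses. Since $u$ has a $0$ at position $i$ while the $i$-th step of $w$ is $1$, we have $u \neq w$, so there is a cover move $u \to u'$ with $u' \le w$, and the whole argument is a case analysis on this first cover. I read the common prefix hypothesis as saying that $u$ agrees with $w$ on the first $i-1$ letters (the prefix shared with $w$); moreover, exactly as in the proof of \cref{meet_with_prefix}, \cref{forcer_un} lets me arrange that the cover $u \to u'$ preserves this common prefix, which disposes of the possibility that the slide reaches into positions below $i$.

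The routine case is when $u'$ still lies in the same family, that is, it keeps the $0$ at position $i$ and the common prefix of length $i-1$. Then $u'$ satisfies all the hypotheses, the induction hypothesis yields $u' \le \des_i(w)$, and transitivity gives $u \le u' \le \des_i(w)$. As in \cref{sub_forcer_un}, cover moves taking place strictly inside one of the subpaths $x_1,\dots,x_N$, or entirely to the right of the descent point $i_1$, fall into this case after commuting them with the downward slide that defines $\des_i(w)$; I would record the needed commutation of disjoint slides exactly as is done there, so that the critical slide can always be performed last.

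The crux, and the step I expect to be the main obstacle, is the critical cover move that raises the $i$-th step from $0$ to $1$: it slides some subpath $y$ of $u$ to the north-west until the leading step of $y$ arrives at position $i$, at height $h$. One first checks that (under the prefix-preservation reduction above) this slide touches only letters in positions $\ge i$, so that $u'$ agrees with $w$ on the whole prefix of length $i$. It then remains to prove $u \le \des_i(w)$ \emph{directly}, and this is where the definition of $i_1$ as the \emph{first} return to height $h$ after $i_0$ that is followed by a $0$ is essential: it forces each of $x_1,\dots,x_{N-1}$ to be followed by a $1$, thereby pinning down where $y$ may legally land and relating it to the block $x_N$ whose downward slide produces $\des_i(w)$. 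Mirroring the ``slided too high / too low'' subcases of \cref{sub_forcer_un}, I expect to split according to whether $y$ lands at, above, or below its target height, in each case replaying on $\des_i(w)$ the slides that carry $u$ up toward $w$ and producing either an equality, a single compensating cover move, or a shorter chain witnessing $u \le \des_i(w)$. Controlling this bookkeeping, using the subpath decomposition $x_1,\dots,x_N$ together with the minimality built into the choice of $i_1$, is the delicate part; the remaining steps are formal.
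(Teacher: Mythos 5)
Your induction runs in the opposite direction from the paper's, and this choice leaves the entire content of the lemma in the one case you do not prove. You induct upward on $u$, picking a first cover $u \to u'$ with $u' \le w$; when $u'$ still has a $0$ at position $i$ the induction hypothesis plus transitivity finishes (no commutation with $\des_i$ is even needed there, since $\des_i$ is applied to $w$, not to $u$ or $u'$ --- your appeal to ``commuting them with the downward slide'' conflates the two ends of the interval). But the base case of your induction is exactly the situation where \emph{every} cover $u \to u'$ below $w$ raises the letter at position $i$ to a $1$, and there you must prove $u \le \des_i(w)$ outright. Your treatment of that case is a list of expectations (``I expect to split\dots'', ``the remaining steps are formal''), not an argument; understanding which $u$ admit only such critical covers below $w$ is essentially the assertion of \cref{force_0} itself, so nothing has been gained. (Your invocation of \cref{forcer_un} to preserve the prefix is also not what that lemma says, though the prefix preservation you want does hold for the elementary reason that heights are pointwise non-decreasing along the order and $u$, $w$ agree on the prefix.)

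The paper instead inducts downward on $w$: since $u < w$, one has $u \le w'$ for some $w'$ covered by $w$; if $w' = \des_i(w)$ we are done, and otherwise the pair $(u,w')$ satisfies the hypotheses, so $u \le \des_i(w')$ by induction, and everything reduces to showing $\des_i(w') \le \des_i(w)$. That last step is a tractable commutation of the single cover $w' \to w$ with the canonically defined slide $\des_i$, classified by whether the cover acts inside some $x_j$, splits some $x_j$, or acts to the right of $x_N$; the only delicate subcases (splitting $x_N$ or $x_{N-1}$) are handled by applying $\des_i$ twice. If you want to salvage your direction of induction you would need an honest proof of your critical case; as written, it is a genuine gap.
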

\begin{proof}
  The proof will be by induction on increasing $w$.

  Necessarily, $u$ is smaller than at least one of the elements
  covered by $w$. If $u \leq \des_i(w)$, the statement holds.

  Because of the shared prefix, the other possible down-sliding moves
  from $w$ are of three types:
  \begin{enumerate}[label=(\arabic*)]
  \item strictly inside one of the $x_i$,
  \item splitting some $x_i$,
  \item somewhere on the right of $x_N$.
  \end{enumerate}

  Assume first that $u \leq w'$ where $w' \to w$ is of type (3). By
  induction, $u \leq \des_i(w')$. There is a chain of cover moves
  $\des_i(w') \to w' \to w$. These two cover moves commute if the slided
  subpath in the move $w' \to w$ is not the first subpath after
  $x_N$. Otherwise, one can find a chain of two cover moves from
  $\des_i(w')$ to $\des_i(w)$. In all cases,
  $\des_i(w') \leq \des_i(w)$.

  Assume now that $u \leq w'$ where $w' \to w$ is of type (1). By
  induction, $u \leq \des_i(w')$. There is a chain of cover moves
  $\des_i(w') \to w' \to w$. These cover moves commute, and therefore
  $\des_i(w') \leq \des_i(w)$.

  Assume then that $u \leq w'$ where $w' \to w$ is of type (2). By
  induction, $u \leq \des_i(w')$. There is a chain of cover moves
  $\des_i(w') \to w' \to w$. If $i < N - 1$, these cover moves
  commute, and therefore $\des_i(w') \leq \des_i(w)$.

  If $i=N$, then one can apply twice the induction step to obtain that
  $u \leq \des_i(\des_i(w'))$. But $\des_i(\des_i(w')) \leq \des_i(w)$
  because one can split $x_N$ after it has been slided down.

  If $i = N-1$, one can also apply twice the induction step, to get
  that $u \leq \des_i(\des_i(w'))$. One then checks that
  $\des_i(\des_i(w')) \leq \des_i(w)$ holds also in this case, by just
  one cover move.

  One therefore deduces the statement in all cases.
\end{proof}

Keeping the same notations, let $s_i(w)$ denote $\des_i^N(w)$, the
image of $w$ under the $N$-times iteration of the application
$\des_i$.

\begin{lemma}
  \label{force_0}
  Let $w$ be a Dyck path such that the $i$-th step in $w$ is $1$ and
  this step does not start at height $0$. Let $S_i(w)$ be the set
  of all Dyck paths $u$ such that $u \leq w$, the $i$-th letter of $u$
  is $0$ and $u$ and $w$ share the same prefix before the $i$-th letter.
  Then $s_i(w)$ is the unique maximal element of $S_i(w)$.
\end{lemma}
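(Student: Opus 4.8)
The plan is to read off the effect of $\des_i$ on the local structure around position $i$ and then iterate \cref{sub_force_0} exactly $N$ times. First I would describe one application of $\des_i$ concretely. Near $i_0$ one can write $w$ as the prefix, followed by the subpaths $x_1,\dots,x_N$ (each an excursion above the height $h\ge 1$ of $i_0$), then the maximal descending block $0^m$ issued from $i_1$, and a remainder $Y$; the move $\des_i$ slides $x_N$ past $0^m$, producing $(\text{prefix},x_1,\dots,x_{N-1},0^m,x_N,Y)$. In this new path the endpoint of $x_{N-1}$ sits at height $h$ and is immediately followed by a $0$, so it becomes the new meeting point; hence $\des_i(w)$ has exactly $N-1$ subpaths between $i_0$ and its meeting point, while both the prefix before position $i$ and the property ``the $i$-th step is an up step starting at height $h$'' are preserved. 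Iterating, $\des_i^k(w)$ carries $N-k$ subpaths for $0\le k\le N$, and after the $N$-th slide the path descends immediately after $i_0$; thus the $i$-th letter of $s_i(w)=\des_i^N(w)$ is $0$.

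Granting this bookkeeping, membership $s_i(w)\in S_i(w)$ is immediate. Each step $\des_i^{k+1}(w)\to\des_i^{k}(w)$ is a cover move (sliding the last subpath back up, as observed when $\des_i$ was introduced), so $s_i(w)\le\dots\le\des_i(w)\le w$; the prefix before position $i$ is untouched throughout; and the $i$-th letter is $0$ by the previous paragraph.

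For the maximality I would prove the stronger statement that $s_i(w)$ is the greatest element of $S_i(w)$, from which uniqueness of the maximal element follows at once. Fixing $u\in S_i(w)$, I would check that the hypotheses of \cref{sub_force_0} hold for the pair $(u,\des_i^k(w))$ for each $0\le k\le N-1$: the path $\des_i^k(w)$ still has its $i$-th step an up step not starting at height $0$; $u$ and $\des_i^k(w)$ share the prefix before position $i$; $u$ has a $0$ at position $i$; and $u\le\des_i^k(w)$ by induction, with $u\le w$ as base case. Each application of \cref{sub_force_0} then gives $u\le\des_i^{k+1}(w)$, so by induction $u\le\des_i^N(w)=s_i(w)$.

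The one genuinely delicate point is the structural claim of the first paragraph --- that $\des_i$ lowers the number of subpaths by exactly one and leaves the ``$i$-th step is up, same prefix'' hypotheses intact --- because this is precisely what licenses applying \cref{sub_force_0} $N$ times and landing inside $S_i(w)$. Everything after that is a transitive chain of inequalities.
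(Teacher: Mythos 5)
Your proof is correct and follows essentially the same route as the paper: verify that $s_i(w)\in S_i(w)$ and then apply \cref{sub_force_0} iteratively $N$ times to the pairs $(u,\des_i^k(w))$. The only difference is that you spell out the bookkeeping showing each application of $\des_i$ reduces the number of subpaths by one and that after $N$ slides the $i$-th letter becomes $0$, which the paper leaves implicit in the phrase ``first note that $s_i(w)$ is indeed in $S_i(w)$''.
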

\begin{proof}
  First note that $s_i(w)$ is indeed in $S_i(w)$.

  Let $u$ be an element of $S_i(w)$. One can apply \cref{sub_force_0}
  to the pair $(u,w)$ to get another pair $(u, w')$ where
  $w'=\des_i(w)$. Either $w' = s_i(w)$, or this new pair satisfies
  again the hypotheses of \cref{sub_force_0}. One can therefore repeat
  this exactly $N$ times, until reaching the pair $(u,s_i(w))$. In
  particular, $u \leq s_i(w)$.
\end{proof}

\begin{theorem}
  The poset $(\Dyck_n, \sm)$ is a meet-semilattice.
\end{theorem}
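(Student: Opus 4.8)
The plan is to prove that any two Dyck paths $v, w \in \Dyck_n$ have a greatest lower bound by induction on the length of their common prefix, combining \cref{meet_with_prefix}, \cref{forcer_un}, and \cref{force_0}. The key observation is that the set $M(v,w)$ of common lower bounds is always non-empty (it contains $\wmin$), so it suffices to produce a unique maximal element, which will then automatically be the meet.

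**First I would** reduce to the case where $v$ and $w$ agree on a long prefix. By \cref{meet_with_prefix}, for every $u \in M(v,w)$ there is $u' \in M(v,w)$ with $u \leq u'$ sharing the full common prefix of $v$ and $w$; hence any maximal element of $M(v,w)$ must share that prefix, and I can restrict attention to the sub-poset of $M(v,w)$ consisting of paths with this prefix. Let $i$ be the first position where $v$ and $w$ differ. Without loss of generality the letter of $v$ at position $i$ is $1$ and that of $w$ is $0$ (they cannot both be equal there, by maximality of the common prefix). The strategy is now to pin down what any candidate meet must do at position $i$ and recurse.

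**The heart of the argument** is a case analysis on position $i$, driven by the two structural lemmas just proved. Take any $u \in M(v,w)$ sharing the common prefix. At position $i$, the letter of $u$ is either $1$ or $0$. If it is $0$, then since $u \leq v$ and $v$ has a $1$ at position $i$ (not at height $0$, because $i$ is past a nontrivial common prefix), \cref{force_0} applies: $u \leq s_i(v)$, the unique maximal element with a $0$ forced at position $i$ below $v$. If instead the letter of $u$ at position $i$ is $1$, then $u \leq w$ forces, by \cref{force_0} applied symmetrically, $u \leq s_i(w)$. Thus every element of $M(v,w)$ lies below one of the two distinguished paths $s_i(v)$ or $s_i(w)$, each of which extends the common prefix by one more determined letter and stays inside $M(v,w)$. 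Comparing $s_i(v)$ and $s_i(w)$ (now with a strictly longer common prefix) recursively, one obtains their meet by induction; this common refinement dominates every $u \in M(v,w)$ and hence is the unique maximal element. Since the construction via $s_i$ and \cref{forcer_un} is explicit, this also yields the claimed algorithm for computing the meet.

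**The main obstacle** I anticipate is verifying that the two branches genuinely merge into a single candidate rather than leaving two incomparable maxima: one must check that $s_i(v)$ and $s_i(w)$ are themselves comparable to a common refinement in $M(v,w)$, so that the induction on prefix length is well-founded and terminates at a single element. This requires confirming that passing from $(v,w)$ to the pair coming out of the forcing lemmas strictly increases the shared prefix length while keeping the meet set unchanged, so that the recursion bottoms out when $v = w$. The commutation-of-cover-moves bookkeeping underlying \cref{sub_force_0} and \cref{sub_forcer_un} is exactly what guarantees this merging, but assembling these local statements into a clean global induction — and ensuring the forced letter at position $i$ is consistent across all of $M(v,w)$ — is the delicate point of the whole proof.
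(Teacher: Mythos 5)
Your overall strategy---reduce to a shared prefix via \cref{meet_with_prefix}, then use the forcing lemmas to extend the prefix and recurse---is the same skeleton as the paper's proof. But the heart of your argument contains a genuine gap, and it is precisely the point you yourself flag as ``the main obstacle.''

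The problem is your two-branch case analysis. You split according to whether $u \in M(v,w)$ (sharing the common prefix) has a $0$ or a $1$ at the first position $i$ where $v$ and $w$ differ, and in the second case you invoke ``\cref{force_0} applied symmetrically'' to conclude $u \leq s_i(w)$. No such symmetric statement exists: \cref{force_0} forces a $0$ at a position where the \emph{larger} path has a $1$; there is no lemma forcing a $1$ at a position where the larger path has a $0$, and one cannot exist, because the second case is in fact \emph{vacuous}. Since the dexter order refines the Tamari order, going up can only move letters $1$ to the left; so if $u \leq w$, $u$ and $w$ agree on the first $i-1$ letters, and $w$ has a $0$ at position $i$, then $u$ must also have a $0$ there. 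This single observation collapses your two branches into one, which is exactly what the paper does: writing the pair so that $w$ is the path with the $1$ after the common prefix, \emph{every} element of $M(v,w)$ sharing the prefix has a $0$ at that position, hence lies below $s_i(w)$ by \cref{force_0}, and one proves $M(v,w) = M(v, s_i(w))$ with a strictly longer common prefix. There is no merging of two incomparable candidates to perform; the recursion is linear and terminates at a single path $z$ with $M(z,z) = M(v,w)$.

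Two further inaccuracies compound the gap. First, you assert that $s_i(v)$ and $s_i(w)$ ``stay inside $M(v,w)$''; this is unjustified and false in general --- $s_i(w)$ is below $w$ by construction but need not be below $v$ at intermediate stages, which is why the paper works with the invariant $M(v,w) = M(v,w')$ on \emph{pairs} rather than with elements of $M(v,w)$. Second, your claimed conclusion that the recursive meet of $s_i(v)$ and $s_i(w)$ ``dominates every $u \in M(v,w)$'' is exactly the assertion that needs proof, and your write-up defers it to unspecified ``commutation bookkeeping.'' As it stands, the argument does not establish the existence of a unique maximal element of $M(v,w)$; to repair it, replace the case split by the observation above and run the paper's one-sided iteration.
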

%% This can be verified using a computer for $n \leq 9$.
\begin{proof}
  Let $v$ and $w$ be two Dyck paths, and let us look for their
  meet. One can assume that $v$ and $w$ are not equal.

  Start from the left, until meeting a difference between $v$ and
  $w$. Let $i$ be the last common point. One can assume that $w$ is
  above $v$ just after the point $i$, by exchanging $v$ and $w$ if
  necessary. Note that the height of $i$ cannot be zero.

  One can therefore apply \cref{force_0} to $w$ for the step after
  position $i$, and obtain an element $w'$ which is maximal among all
  elements smaller than $w$ that share the same prefix followed by
  the letter $0$.

  This gives a new pair of elements $(v,w')$ with $w' \leq w$. Let us
  prove that $M(v,w) = M(v,w')$. The inclusion $M(v,w') \subseteq
  M(v,w)$ is clear because $w' \leq w$.

  Conversely, let $u$ be an element of $M(v,w)$. Using
  \cref{meet_with_prefix}, one can find $u'$ in $M(v,w)$ with $u \leq
  u'$ and $u'$ share the common prefix of $v$ and $w$. Note that the
  first letter after the common prefix in $u'$ is $0$, because $u'
  \leq v$. It therefore follows from the definition of $w'$ that $u'
  \leq w'$.

  Hence $M(v,w) = M(v,w')$, and the common prefix of $v$ and $w'$ is
  strictly longer. Therefore iterating this whole procedure on pairs
  of Dyck paths ends at a common Dyck path $z$, that is smaller than
  $v$ and $w$ and such that $M(z,z) = M(v,w)$. This $z$ is therefore
  the meet of $v$ and $w$.
\end{proof}

\section{Derived equivalences of intervals}

We now turn briefly to a more subtle equivalence between intervals,
namely \defi{derived equivalence}, which is defined as follows. One
can consider any finite poset $P$ as a small category, with a unique
morphism $x \to y$ if and only if $x \leq y$. Then the category of
modules over $P$ with coefficients in some base field $K$ can be
defined as the category of functors from $P$ to finite-dimensional
vector spaces over $K$. This is an abelian category, with enough
projectives and injectives, and finite global dimension. One can
therefore associate to $P$ the (bounded) derived category
$\deri_{K}(P)$ of this category of modules.

Two posets $P$ and $Q$ are said to be \defi{derived equivalent} (over
$K$) if there is a triangle-equivalence between $\deri_K(P)$ and
$\deri_K(Q)$. 

%% One can similarly define the derived category $\deri_A(P)$ of functors
%% from $P$ to any abelian category $A$. If there is a construction of
%% triangle-equivalences that works for all abelian target categories,
%% then $P$ and $Q$ are \defi{universally derived equivalent}.

In this section, we conjecture the existence of derived equivalences
between some particular kinds of intervals in $\Dyck_n$.

Recall from section \ref{specific} that the intervals $J(w) = I((1,w,1,0,0))$ are the factors in
the cartesian factorisation of the intervals $I(w)$.

\begin{conjecture}
  \label{th_iso_derive}
  For any $w$, the derived isomorphism type of the interval $J(w)$ only
  depends on the union of the sets of Dyck paths in the
  level-decomposition of all blocks of $w$.
\end{conjecture}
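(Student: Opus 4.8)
The statement is the derived-category analogue of \cref{th_iso}, and the natural plan is to replay the proof of \cref{th_iso} with isomorphism upgraded to derived equivalence. In the isomorphism setting all the content sat in the two factorisations $I(w)\cong\prod_j I(b_j)$ over the blocks $b_j$ of $w$ (\cref{facto_tous}) and $I(b)\cong\prod_i J(b_{j,i})$ over the level pieces (\cref{th_level_iso}), which exhibit the isomorphism type of $I(w)$ as a function of the multiset of level pieces. The essential new difficulty is that the $J(w)$ are precisely the \emph{indecomposable} factors occurring in these products, so the factorisations carry no information about $J(w)$ in isolation; any real input has to be a derived equivalence between posets that are \emph{not} isomorphic. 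The plan is therefore: (i) reduce to a short list of local moves on $w$ preserving the relevant invariant; (ii) realise each such move as a triangle equivalence $\deri_K(J(w))\simeq\deri_K(J(w'))$; (iii) conclude by transitivity.

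For step (i) I would first pin down the correct invariant. Derived equivalent incidence algebras have Grothendieck groups of equal rank, hence equal numbers of elements; but $J(\emptyset),J((1,0)),J((1,0)^2),\dots$ have $2,3,4,\dots$ elements while all of them have empty multiset of level pieces, so the multiset alone cannot determine the derived type and the number of blocks of $w$ must be adjoined to the invariant. Granting such a refinement, there are two evident invariant-preserving local moves — transposition of two adjacent level pieces inside one block, and transposition of two adjacent blocks — and the goal is to show that these (possibly together with the transfer of a single piece between adjacent blocks, should that also preserve the derived type) connect all $w$ sharing a fixed invariant. Deciding exactly which regroupings are allowed is itself part of fixing the invariant.

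Step (ii) is the core, and here I would avoid constructing tilting complexes by hand and instead try to present each local move as a \emph{universal derived equivalence of posets} in the sense of Ladkani. A transposition of adjacent pieces changes $J(w)$ only inside a bounded region, and the plan is to find an antichain cutting $J(w)$ into an order ideal and an order filter across which $J(w)$ and $J(w')$ appear as the two opposite gluings of one fixed pair of posets along one fixed bimodule; Ladkani's theorems then supply the triangle equivalence directly and automatically leave the unchanged part of the interval untouched. Should no such clean decomposition present itself, the fallback is Rickard's criterion: exhibit a two-term tilting complex over the incidence algebra $K J(w)$ that is a local mutation in the swapped region and the identity elsewhere, then verify self-orthogonality and generation; the identity-elsewhere part is controlled by the tensor factorisation $K(P\times Q)\cong KP\otimes_K KQ$, under which tilting complexes are preserved.

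The main obstacle is exactly this local derived equivalence for the adjacent-piece transposition, together with the proof that it behaves as claimed. Before committing to it I would gather evidence and sharpen the invariant by computing the standard derived invariants — Coxeter polynomial, Cartan determinant, and cardinality — of $J(w)$ and $J(w')$ across the candidate moves; agreement of Coxeter polynomials is a cheap and stringent necessary test, and any failure would immediately correct the shape of the conjecture. I expect the block transposition to be the easier case, since two adjacent blocks of $w$ interact only through a single touch point of the maximum $(1,w,1,0,0)$, whereas the adjacent-piece transposition inside a block, where the two pieces occupy consecutive arches at the same height, is where I expect the genuine difficulty to lie.
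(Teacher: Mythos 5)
The statement you are proving is labelled a \emph{conjecture} in the paper, and the paper offers no proof of it: the author supports it only by experimental evidence (coincidence of Coxeter polynomials) and, in the simplest block-permutation instance, by an explicit flip-flop in the sense of Ladkani. Your proposal is likewise not a proof but a plan, and you say so yourself: the entire content of step (ii) --- producing an actual triangle equivalence $\deri_K(J(w))\simeq\deri_K(J(w'))$ for each local move --- is left as ``the main obstacle,'' with Ladkani gluings and Rickard tilting complexes named as candidate tools but not constructed. That is precisely the point where the paper also stops: for the adjacent-level-piece transposition the author exhibits the smallest example, $J(1,1,0,1,1,0,0,0)$ versus $J(1,1,1,0,1,0,0,0)$, notes the matching Coxeter polynomials, and writes that it is not clear whether these intervals are derived equivalent. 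So the gap is genuine and coincides with the open core of the conjecture; nothing in your proposal closes it.

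That said, two things in your write-up are worth keeping. First, your observation that the stated invariant cannot suffice is correct and sharp: $J(\emptyset)$, $J((1,0))$, $J((1,0,1,0)),\dots$ all have empty multiset of level pieces yet are chains of different lengths, and since derived equivalence of incidence algebras preserves the rank of the Grothendieck group, hence the cardinality, the number of blocks (equivalently, the size) must be adjoined to the invariant --- or the comparison restricted to $w$ of equal size, which is surely the intended reading of both \cref{th_iso} and \cref{th_iso_derive}. Second, your division of the problem into block transpositions (where the paper's flip-flop argument genuinely works) and within-block level-piece transpositions (where it does not) correctly locates the difficulty. But identifying where the difficulty lies is not the same as resolving it, and as it stands neither you nor the paper has a proof of this statement.
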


This conjecture is based on experimental evidence, namely the
coincidence of some invariants of posets (Coxeter polynomials) which
only depend on the derived categories.

Note the striking similarity of this conjecture with theorem
\ref{th_iso}. This conjecture may even be a characterisation of
derived equivalence classes.

\medskip

As a special case, if two words $w$ and $w'$ are related by a
permutation of their blocks, then $J(w)$ and $J(w')$ should be
derived-equivalent. As the simplest possible non-trivial example, let
us consider the posets $J(1,0,1,1,0,0)$ and $J(1,1,0,0,1,0)$. Both
have $9$ elements and share the same Coxeter polynomial
$\Phi_1^2 \Phi_2 \Phi_3 \Phi_5$, where the $\Phi_d$ are the cyclotomic
polynomials. These posets are in fact related by a flip-flop in the
sense of Ladkani \cite{flipflop} (mapping two elements near the top of
$J(1,1,0,0,1,0)$ to two elements near the bottom of $J(1,0,1,1,0,0)$),
and therefore derived-equivalent.

As another special case, if two block-indecomposable words $w$ and
$w'$ are related by a permutation of their level-decomposition, then
$J(w)$ and $J(w')$ should be derived-equivalent. As the simplest
possible non-trivial example, let us consider the posets
$J(1,1,0,1,1,0,0,0)$ and $J(1,1,1,0,1,0,0,0)$. Both have $27$ elements
and share the same Coxeter polynomial $\Phi_2 \Phi_4 \Phi_{18}
\Phi_{54}$. It is not clear if these intervals are derived equivalent.

To illustrate the general case in the simplest possible way, consider
the posets $J(1,0,1,1,1,0,0,0)$ and $J(1,1,0,0,1,1,0,0)$. Both have
$20$ elements and share the same Coxeter polynomial
$\Phi_1^2 \Phi_2^2 \Phi_{3} \Phi_{5} \Phi_6^2 \Phi_7$. In this case,
there is no obvious flip-flop to prove the expected derived
equivalence.

\subsection{About the notion of $f$-vector}

Beware that this section is very speculative.

The classical notion of $f$-vector is attached to cellular or
simplicial complexes, where it records the number of cells of every
dimension.

In some famillies of posets, including the one studied here, but also
the Tamari lattices, the cambrian lattices and many of their relatives, the pictures of the
Hasse diagram of intervals, when visually inspected by the human eye,
strongly suggest the existence of a cellular complex whose skeleton
would be the Hasse diagram.

Although we will not try to give and justify a precise definition
here, there is one way to find the $f$-vector of this putative
cell-complex, using only the partial order. Namely, every cell can be
identified with its minimal and maximal elements. These two elements
must form something like a minimal spherical interval in the poset.

There is a general phenomenon, observed in several families of posets,
that derived equivalence of posets often come together with an equality
between $f$-vectors. One important instance is the conjectured derived
equivalence between cambrian lattices and lattices of order ideals in
root posets.

This phenomenon seems also to be present in the intervals of the dexter
lattices, at least in the intervals $J(w)$. One expects that derived
equivalent $J(w)$ will share the same $f$-vector, but not conversely.

\section{The Hochschild polytope as an interval}

In this section, we explain an unexpected connection between a
specific interval in $\Dyck_{n+2}$ and a cell complex called the
Hochschild polytope, introduced in algebraic topology by Saneblidze
\cite{san_arxiv, san_note}. Our initial reason for looking at this
particular interval was its appearance inside the interval of largest
cardinality among all $I(w)$.

For $n \geq 1$, let $F_n$ be the interval in $\Dyck_{n+2}$ between
$(1,1,0,0,(1,0)^{n})$ and $(1,1^n,0^n,1,0,0)$. This is indeed an
interval, as one can move from the former to the latter by sliding (in
their order from left to right) all the initial blocks $(1,0)$
(all of them are slided to the maximal possible height, except the
last one that is slided to height $1$).

\begin{figure}[h]
\centering
\begin{tikzpicture}[>=latex,line join=bevel,scale=0.25]
\node (node_1) at (89.0bp,135.0bp) [draw,draw=none] {$\vcenter{\hbox{$\begin{tikzpicture}[scale=.15]  \draw[gray,very thin] (0, 0) grid (6, 2);  \draw[rounded corners=1, color=black, line width=2] (0, 0) -- (1, 1) -- (2, 2) -- (3, 1) -- (4, 2) -- (5, 1) -- (6, 0);\end{tikzpicture}$}}$};
  \node (node_0) at (89.0bp,33.0bp) [draw,draw=none] {$\vcenter{\hbox{$\begin{tikzpicture}[scale=.15]  \draw[gray,very thin] (0, 0) grid (6, 2);  \draw[rounded corners=1, color=black, line width=2] (0, 0) -- (1, 1) -- (2, 2) -- (3, 1) -- (4, 0) -- (5, 1) -- (6, 0);\end{tikzpicture}$}}$};
  \draw [blue,->,dashed] (node_0) ..controls (89.0bp,74.031bp) and (89.0bp,83.221bp)  .. (node_1);
\end{tikzpicture}
\begin{tikzpicture}[>=latex,line join=bevel,scale=0.25]
\node (node_4) at (243.0bp,381.0bp) [draw,draw=none] {$\vcenter{\hbox{$\begin{tikzpicture}[scale=0.15]  \draw[gray,very thin] (0, 0) grid (8, 3);  \draw[rounded corners=1, color=black, line width=2] (0, 0) -- (1, 1) -- (2, 2) -- (3, 3) -- (4, 2) -- (5, 1) -- (6, 2) -- (7, 1) -- (8, 0);\end{tikzpicture}$}}$};
  \node (node_3) at (370.0bp,251.0bp) [draw,draw=none] {$\vcenter{\hbox{$\begin{tikzpicture}[scale=0.15]  \draw[gray,very thin] (0, 0) grid (8, 3);  \draw[rounded corners=1, color=black, line width=2] (0, 0) -- (1, 1) -- (2, 2) -- (3, 3) -- (4, 2) -- (5, 1) -- (6, 0) -- (7, 1) -- (8, 0);\end{tikzpicture}$}}$};
  \node (node_2) at (117.0bp,251.0bp) [draw,draw=none] {$\vcenter{\hbox{$\begin{tikzpicture}[scale=0.15]  \draw[gray,very thin] (0, 0) grid (8, 2);  \draw[rounded corners=1, color=black, line width=2] (0, 0) -- (1, 1) -- (2, 2) -- (3, 1) -- (4, 2) -- (5, 1) -- (6, 2) -- (7, 1) -- (8, 0);\end{tikzpicture}$}}$};
  \node (node_1) at (144.0bp,135.0bp) [draw,draw=none] {$\vcenter{\hbox{$\begin{tikzpicture}[scale=0.15]  \draw[gray,very thin] (0, 0) grid (8, 2);  \draw[rounded corners=1, color=black, line width=2] (0, 0) -- (1, 1) -- (2, 2) -- (3, 1) -- (4, 2) -- (5, 1) -- (6, 0) -- (7, 1) -- (8, 0);\end{tikzpicture}$}}$};
  \node (node_0) at (217.0bp,33.0bp) [draw,draw=none] {$\vcenter{\hbox{$\begin{tikzpicture}[scale=0.15]  \draw[gray,very thin] (0, 0) grid (8, 2);  \draw[rounded corners=1, color=black, line width=2] (0, 0) -- (1, 1) -- (2, 2) -- (3, 1) -- (4, 0) -- (5, 1) -- (6, 0) -- (7, 1) -- (8, 0);\end{tikzpicture}$}}$};
  \draw [red,->] (node_0) ..controls (251.39bp,77.124bp) and (261.29bp,90.01bp)  .. (270.0bp,102.0bp) .. controls (292.05bp,132.36bp) and (315.59bp,167.08bp)  .. (node_3);
  \draw [blue,->,dashed] (node_0) ..controls (187.31bp,74.669bp) and (180.04bp,84.625bp)  .. (node_1);
  \draw [blue,->,dashed] (node_3) ..controls (314.98bp,307.46bp) and (305.27bp,317.24bp)  .. (node_4);
  \draw [blue,->,dashed] (node_1) ..controls (133.49bp,180.36bp) and (130.02bp,195.02bp)  .. (node_2);
  \draw [red,->] (node_2) ..controls (160.81bp,296.5bp) and (175.79bp,311.73bp)  .. (node_4);
\end{tikzpicture}
\caption{The Hochschild intervals $F_1$ and $F_2$.}
\end{figure}
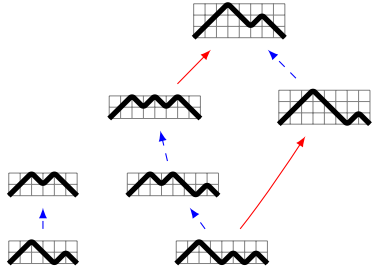

Let us define a \textit{valley} in a Dyck path to be a subword
$(0,1)$, which means a local minimum for the height
function. Similarly, a \textit{peak} is a local maximum of height.

Note that all elements of $F_n$ start with $(1,1)$. Moreover, all the
valleys in all elements of $F_n$ have height $0$ or $1$. This follows
from the next lemma, as this is true for the maximum of $F_n$.

\begin{lemma}
  \label{hauteur01}
  Let $w$ be a Dyck path having only valleys at height $0$ or
  $1$. If $y \leq w$, then $y$ has the same property.
\end{lemma}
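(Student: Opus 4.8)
The plan is to reduce to a single covering step and then track the effect of a slide on the valleys. Since $y \leq w$ means there is a chain of covering relations $y = z_0 \to z_1 \to \dots \to z_k = w$ in $\Gamma_n$, and since the desired property is inherited from the top of a single covering relation down to its bottom, it suffices to prove the statement when $w$ covers $y$; one then descends the chain. So I would assume $w = M(y,x,i)$ for a movable subpath $x$ of $y$ and some $1 \leq i \leq N(y,x)$. Writing $y = A\,0^N\,x\,B$, where $0^N$ (with $N\geq 1$) is the maximal run of zeros just before $x$, where $A$ is the nonempty prefix ending in a $1$, and where $B$ either begins with $1$ or is empty, one has $w = A\,0^{i-1}\,x\,0^{N-i+1}\,B$. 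The single geometric fact I need is that the slide strictly raises the base of $x$: if $h$ is the height at which $x$ sits in $y$, then in $w$ the subpath $x$ sits at height $h' = h + (N-i+1) \geq h+1$, while $B$ rejoins the axis at the same height $h$ in both words and the prefix $A$ is literally common.

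Next I would classify the valleys of $y$ into four kinds: those internal to $A$, those internal to $B$, those internal to $x$, and the junction valleys immediately before and after $x$, all of which sit at height $h$. The valleys internal to $A$ and to $B$ occur at exactly the same absolute heights among the valleys of $w$ as well, since $A$ is a common prefix starting at height $0$ in both words and $B$ is a common suffix based at height $h$ in both words; hence the hypothesis on $w$ already bounds them by $1$.

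The two remaining families are where the hypothesis on $w$ does the real work. An internal valley of $x$ sits, in $w$, at height $h' + r$ with $r \geq 1$ (interior points of a subpath are strictly above its base) and $h' \geq 1$, hence at height $\geq 2$; since $w$ has no valley above height $1$, the subpath $x$ can have no internal valley at all, i.e. $x = 1^m 0^m$. In particular $y$ has no internal $x$-valley either. Finally, the junction valley lying between the relocated block $0^{N-i+1}$ and $B$ occurs in $w$ precisely at height $h$, so the hypothesis forces $h \leq 1$; and when $B$ is empty one gets $h = 0$ directly, since $y$ then ends with $x$. As all junction valleys of $y$ are at height $h$, they too lie at height $0$ or $1$. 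Combining the four cases shows every valley of $y$ is at height $0$ or $1$.

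The main obstacle is the bookkeeping of matching valleys across the slide, in particular the degenerate cases $i=1$ (no zero sits immediately before $x$ in $w$, so there is simply no valley there) and $B$ empty (where $x$ ends the path, forcing its base height to be $0$). The genuinely load-bearing observation is the quantitative one: raising $x$ by at least one unit pushes any interior valley of $x$ to height $\geq 2$, which is incompatible with the hypothesis on $w$ unless $x$ is a single peak. Everything else is routine verification of heights at the junctions.
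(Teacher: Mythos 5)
Your proof is correct and follows essentially the same route as the paper: reduce to a single covering relation and check directly how the slide affects the heights of the valleys. The paper additionally splits off the height-$0$ valleys by reducing to the block-indecomposable case and then leaves the remaining verification as "one can check"; your uniform classification of the valleys of $y$ relative to the slid subpath $x$ supplies exactly that check, including the degenerate cases $i=1$ and $B$ empty.
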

\begin{proof}
  It is enough to prove this when $y$ is covered by $w$.

  If $w$ has a valley at height $0$, then $y$ has the same valley. One
  can therefore assume that $w$ is block-indecomposable, and has only
  valleys at height $1$. Then one can check that all possible down
  cover moves from $w$ can only create a valley at level $0$ or $1$.
\end{proof}

\begin{lemma}
  \label{ht_decroit}
  Let $w \in F_n$. Then the height of the valleys in $w$ is weakly
  decreasing from left to right.
\end{lemma}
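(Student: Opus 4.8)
The plan is to first reformulate the statement so that it becomes a purely block-theoretic condition, and then to prove it by induction over the interval $F_n$, working downward from its maximum and analysing cover moves. For the reformulation, recall from \cref{hauteur01} that every $w\in F_n$ has all its valleys at height $0$ or $1$. A valley of height $0$ occurs exactly at a boundary between two consecutive blocks, whereas a valley lying strictly inside a block has height $\geq 1$, hence height $1$. Thus "the valley heights are weakly decreasing'' is equivalent to saying that every block of $w$ \emph{except possibly the first} is a pyramid $1^k0^k$ (a block with no interior valley): any interior valley in a later block would be a height-$1$ valley occurring after the height-$0$ valley at the preceding block boundary. I will also record a normal form that I will use repeatedly: a block all of whose interior valleys have height $1$ must be of the shape $1\,P_1P_2\cdots P_p\,0$ with each $P_i$ a pyramid sitting on the height-$1$ floor.

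Next I would prove the reformulated claim by decreasing induction on elements of $F_n=[\,(1,1,0,0,(1,0)^n),\,(1,1^n,0^n,1,0,0)\,]$, i.e.\ by induction on increasing area. The maximum is block-indecomposable, so the base case is trivial. For $y\in F_n$ with $y\neq\max$, since $F_n$ is an interval there is a cover $y\lessdot w$ with $w\in F_n$ (any saturated chain from $y$ up to $\max$ stays inside the interval), and by induction $w$ has the property. I would then argue by contradiction: suppose $y$ has a non-pyramid block $B=B_j$ with $j\geq 2$, which by the height bound necessarily has an interior valley of height $1$ and hence the normal form above. I claim that \emph{every} cover $y\to w$ with $w\in F_n$ must leave $w$ with a non-pyramid block in position $\geq 2$, contradicting that $w$ has the property.

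The heart of the argument, and the step I expect to be the main obstacle, is the case analysis of the single slide realising $y\to w$, according to how it interacts with $B$. Since every subpath lies inside a single block, the only moves touching $B$ are: sliding $B$ itself up; an interior slide within $B$; or sliding the next block $B_{j+1}$ up into the trailing zeros of $B$. If $B$ is slid up as a whole by $\ell\geq 1$, its interior height-$1$ valley rises to height $1+\ell\geq 2$, which is forbidden in $F_n$ by \cref{hauteur01}. An interior slide within $B$ must lift one of the pyramids $P_{i+1}$ (with $i+1<p$) off the height-$1$ floor, again creating a valley of height $2$; and one checks there is no movable subpath inside $B$ that could instead erase a valley while staying in $F_n$. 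If $B_{j+1}$ is slid up into $B$, raising it by $\ell\geq 2$ again produces a valley of height $\geq 2$, while raising it by $\ell=1$ simply inserts the pyramid $B_{j+1}$ at height $1$ into the normal form $1\,P_1\cdots P_p\,0$ of $B$, producing a block of the same shape in position $j\geq 2$, i.e.\ a later non-pyramid block. Finally, any slide not touching $B$ (inside another block, or merging blocks that do not involve $B$) leaves a copy of $B$ as a non-pyramid block still in position $\geq 2$. In every case $w\in F_n$ retains a later non-pyramid block, so $w$ violates the property, contradicting the induction hypothesis. Hence $y$ has the property, completing the induction. The delicate point throughout is precisely this controlling of the moves near $B$: it is the combination of the normal form for height-$1$-valley blocks with the global height bound from \cref{hauteur01} that prevents an offending block from ever being repaired without leaving $F_n$.
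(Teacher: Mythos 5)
Your overall architecture --- reformulating the claim via \cref{hauteur01} as ``every block after the first is a pyramid $1^k0^k$'', then running a downward induction from the maximum of $F_n$ with an exhaustive analysis of which cover moves can stay inside $F_n$ --- is workable, and it is considerably more elaborate than the paper's own proof, which argues in one step: a height-$0$ valley followed by a height-$1$ valley cannot occur, because in any chain of cover moves up to the (block-indecomposable) maximum the subpath after the height-$0$ valley must eventually be slid, which would raise the height-$1$ valley it contains to height at least $2$, contradicting \cref{hauteur01}. The problem with your write-up is that the step you yourself flag as the main obstacle --- the interior-slide case --- contains a false assertion.

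You claim that an interior slide within $B=1\,P_1\cdots P_p\,0$ must create a valley of height $2$, and that ``one checks there is no movable subpath inside $B$ that could instead erase a valley while staying in $F_n$.'' That check fails. For $p\geq 3$, the single pyramid $P_i$ with $2\leq i\leq p-1$ is a movable subpath (preceded by the zeros of $P_{i-1}$, followed by the initial $1$ of $P_{i+1}$), and sliding it all the way to the summit of $P_{i-1}$ erases the height-$1$ valley between $P_{i-1}$ and $P_i$ without creating any valley of height $\geq 2$: the two pyramids merge into the single larger pyramid $1^{a+b}0^{a+b}$ on the height-$1$ floor, and the valley at the right end of $P_i$ stays at height $1$. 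Concretely, inside the block $(1,1,0,1,0,1,0,0)$ one may slide the middle $(1,0)$ to obtain $(1,1,1,0,0,1,0,0)$, a legitimate cover move producing no high valley. So case (b) as you state it does not go through. The gap is repairable: this merge is only available for $2\leq i\leq p-1$, hence needs $p\geq 3$, and it reduces the number of floor pyramids from $p$ to $p-1\geq 2$, so the block remains a non-pyramid and the covering element still violates the property. Two smaller repairs are needed in the same spirit: in case (c) you cannot assume $B_{j+1}$ is a pyramid (if it is not, its interior height-$1$ valley rises to height $\geq 2$ and the move leaves $F_n$; if it is, the block absorbs it and stays non-pyramid), and throughout you should allow the slid subpath to be a union of several consecutive blocks rather than a single one. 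With these corrections every admissible cover still exhibits a non-pyramid block in position $\geq 2$ and your induction closes, but as written the decisive check is wrong.
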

\begin{proof}
  Otherwise, there is a valley of height $0$ followed by a valley of
  height $1$. The subpath after the valley of height $0$ must be
  slided at some point in any chain of cover moves from $w$ to the maximum
  of $F_n$, but this would create a valley of height at least
  $2$. This is absurd.
\end{proof}

\begin{lemma}
  \label{picfin}
  For $n \geq 1$, every element of $F_n$ ends either with $(0,1,0)$ or
  with $(0,1,0,0)$.
\end{lemma}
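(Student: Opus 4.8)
The plan is to reformulate the conclusion in terms of two numerical quantities attached to $w$ and then to control them using the comparison $w \leq m$, where $m = (1,1^n,0^n,1,0,0)$ is the maximum of $F_n$. Write $z(w)$ for the length of the final run of $0$'s of $w$ (equivalently, the height of the last peak), and let $a$ be the length of the final ascending run, i.e.\ the maximal block of consecutive $1$'s ending at the last peak. Ending with $(0,1,0)$ or $(0,1,0,0)$ is then precisely the statement that $z(w)\in\{1,2\}$ and that $a=1$ whenever $z(w)=2$. Indeed, if $z(w)=1$ the last peak sits at height $1$, so the step preceding the final $1$ is forced to be a $0$ (a $1$ would come from height $-1$), and $w$ ends with $(0,1,0)$. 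If $z(w)=2$ the last peak sits at height $2$, so $a\in\{1,2\}$; when $a=1$, maximality of the ascending run forces the preceding step to be a $0$, giving a valley at height $1$ and the ending $(0,1,0,0)$, while $a=2$ means $w$ ends with a full block $(1,1,0,0)$. It therefore suffices to rule out $z(w)\geq 3$ and to rule out that $w$ ends with the block $(1,1,0,0)$.

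The key tool is the monotonicity of $z$ along chains of cover moves. I would first check that a single cover move $u \to u'$ leaves $z$ unchanged unless it slides the last block of $u$, in which case it strictly increases $z$. The point is that a movable subpath ending at the last letter of $u$ must have base height $0$, hence is exactly the last block; sliding any other (internal) subpath copies the tail of the path verbatim and does not disturb the final zeros, whereas sliding the last block $x$ upward by $i\geq 1$ positions produces a path ending in $x\,0^{i}$, so $z$ grows by $i$. Consequently $z$ is weakly increasing along any directed path in $\Gamma_n$, and it increases strictly exactly at the steps that slide the last block.

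Now I would apply this to a chain of cover moves from $w$ up to $m$, which exists because $w\leq m$. Since $m$ is block-indecomposable with $z(m)=2$, monotonicity gives $z(w)\leq 2$, ruling out $z(w)\geq 3$. For the remaining case, $z(w)=2$ with final block $(1,1,0,0)$, monotonicity forces $z$ to be constantly equal to $2$ along the chain, so no step slides the last block. But $w$ has at least two blocks while $m$ has only one, and the number of returns to $0$ can only decrease along cover moves (sliding a subpath upward never creates a new contact with the axis); the final return to $0$, just before the last block, can disappear only by merging the last block with its predecessor, which is precisely a last-block slide and would raise $z$ to at least $3$. This contradiction shows that $w$ cannot end with $(1,1,0,0)$, completing the argument.

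The main obstacle is the monotonicity lemma for $z$, and specifically the bookkeeping that (i) the only cover moves changing the number of trailing zeros are last-block slides, and (ii) reaching the single-block maximum from a path with at least two blocks forces such a slide. Both become routine once the effect of $M(u,x,i)$ on the suffix is written out, but they must be stated with care. Note that the valley-height results \cref{hauteur01} and \cref{ht_decroit} are not strictly needed for this approach, although they give a consistent global picture of the admissible endings.
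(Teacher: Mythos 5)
Your proof is correct and follows essentially the same route as the paper: the paper also reduces to excluding a final block $(1,1,0,0)$ via the shape of the maximum and then argues that this block would have to be slid to reach the block-indecomposable maximum, which would push the trailing-zero count above $2$. You merely make explicit the monotonicity of the trailing-zero statistic along cover moves, which the paper leaves implicit.
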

\begin{proof}
  The only other possibility is to end with $(1,1,0,0)$,
  because of the shape of the maximum of $F_n$. But then this final
  subpath can not be slided with the result being still below
  the maximum of $F_n$. This is absurd.
\end{proof}

\begin{lemma}
  \label{intrinsics}
  The set $F_n$ can be described as the set of Dyck paths starting
  with $(1,1)$, having only valleys of height $0$ or $1$, where these
  heights are decreasing from left to right, and ending either by
  $(0,1,0)$ or by $(0,1,0,0)$.
\end{lemma}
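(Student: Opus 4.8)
Throughout, write $D$ for the set of size-$(n+2)$ Dyck paths described in the statement, so that both $F_n$ and $D$ are subsets of $\Dyck_{n+2}$, and recall that $F_n = [a_n,b_n]$ with $a_n = (1,1,0,0,(1,0)^n)$ and $b_n = (1,1^n,0^n,1,0,0)$. The plan is to prove the two inclusions $F_n \subseteq D$ and $D \subseteq F_n$ separately.

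The inclusion $F_n \subseteq D$ is merely an assembly of what precedes. Every element of $F_n$ starts with $(1,1)$ and has only valleys of height $0$ or $1$ by the observations made just before \cref{hauteur01}; the valley heights are weakly decreasing by \cref{ht_decroit}; and every element ends with $(0,1,0)$ or $(0,1,0,0)$ by \cref{picfin}. Hence each $w \in F_n$ satisfies the four conditions defining $D$.

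For the reverse inclusion I would show that every $w \in D$ satisfies $a_n \le w \le b_n$. The first step is a normal form: the monotonicity of the valley heights forces all height-$1$ valleys to lie in the first block $B$ and all subsequent blocks to be plain triangles $1^{a}0^{a}$ at height $0$. Writing $B = (1,\tau_1\cdots\tau_m,0)$ with each $\tau_j = 1^{c_j}0^{c_j}$, a path of $D$ is exactly such a $B$ followed by baseline triangles $\rho_1,\dots,\rho_p$, with the ending condition of \cref{picfin} constraining the final factor. In this language $a_n$ is the element with no height-$1$ valley and as many blocks as possible, while $b_n$ is the block-indecomposable one. The upper bound $w \le b_n$ I would then get by climbing inside $D$: for $w \neq b_n$ I would exhibit a single up cover move $w \to w^{+}$ with $w^{+} \in D$, namely the north-west slide of the appropriate subpath to the unique height that creates no valley above height $1$ (a baseline triangle is raised to height $1$, while an interior bump is slid maximally so that it merges into the preceding triangle, extending its peak and leaving no valley behind). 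Each such move strictly increases the area, and $b_n$ is a maximal element of $\Dyck_{n+2}$ by the characterisation of maximal elements, so iterating terminates at $b_n$ and gives $w \le b_n$. Symmetrically, for the lower bound I would descend: for $w \neq a_n$ there is a down cover move inside $D$ lowering or splitting off the rightmost height-$1$ piece, and here the valley condition is preserved for free by \cref{hauteur01}. This terminates at $a_n$, which admits no down move staying in $D$, so $a_n \le w$. Combining the two bounds yields $D \subseteq [a_n,b_n] = F_n$.

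The main obstacle is the climbing step: one must prove that a valley-preserving cover move really exists as long as $w \neq b_n$, and that the \emph{decreasing} and \emph{ending} conditions survive each slide. The subtlety in the step size is genuine — for instance from $w = (1,1,0,1,0,0,1,0)$ only the slide landing at height $1$ stays in $D$, while the maximal slide produces $(1,1,0,1,1,0,0,0)$, which fails the ending condition — so choosing the correct move and proving it preserves all four defining properties is where the real case analysis lies, together with the symmetric verification that $a_n$ and $b_n$ are the unique $D$-extremal elements.
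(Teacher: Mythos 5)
Your plan is essentially the paper's proof: the forward inclusion is assembled from \cref{hauteur01}, \cref{ht_decroit} and \cref{picfin} exactly as you do, and the converse is obtained by showing that every non-maximal element of the candidate set is covered by another element of the set and every non-minimal element covers one, so that climbing and descending inside the set must terminate at the maximum and minimum of $F_n$. The case analysis you defer as ``the real obstacle'' is precisely what the paper supplies (for the upward move: raise by one step the subpath after the \emph{first} height-$0$ valley, or merge the first two peaks; for the downward move: slide down the last peak, or the subpath after the rightmost height-$1$ valley, or split a peak of height at least $2$), so your outline is correct and matches the published argument.
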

\begin{proof}
  Let us call $Q_n$ this set of elements. By the preceding lemmas and remarks,
  $Q_n$ contains $F_n$. For the converse inclusion, one only needs to
  check the two following statements: (1) an element of $Q_n$ which is
  not the maximum of $F_n$ can be covered by another element of $Q_n$
  and (2) an element of $Q_n$ which is not the minimum of $F_n$ covers
  another element of $Q_n$.

  (1) Let $z$ be an element of $Q_n$. If $z$ has exactly two peaks,
  then $z$ is either the maximum of $F_n$ or is covered by this
  maximum. One can therefore assume that there are at least three
  peaks in $z$, hence at least two valleys. If there is a valley of height
  $0$, one can slide up by one step the subpath after the first valley
  of height $0$, which becomes a valley of height $1$. The result is
  still in $Q_n$. If all valleys have height $1$, one can merge
  the first two peaks.

  (2) Let $z$ be an element of $Q_n$. If it ends with $(0,1,0,0)$, one
  can slide down this last peak and get another element of
  $Q_n$. Otherwise, $z$ ends with $(0,1,0)$. If it has a valley at
  height $1$, one can slide down the subpath after the rightmost
  valley of height $1$. Otherwise one can take any peak of height at
  least $2$ and cut it into $2$ smaller peaks, except when the only
  available peak of height at least $2$ is at the beginning of
  $z$. But then $z$ is the minimum of $F_n$.
\end{proof}

\begin{remark}
  \label{two_peaks}
  Note that being block-indecomposable is equivalent inside $F_n$ to
  having no valley at height $0$. Note also that for $n \geq 1$ every
  element of $F_n$ has at least two peaks, because the unique path
  with just one peak is not in $F_n$.
\end{remark}

Let us define $F_{n,b}$ as the subset of block-indecomposable elements
of $F_{n}$.

\begin{lemma}
  \label{booleen_top}
  For $n \geq 1$, the subset $F_{n,b}$ form a boolean lattice of
  cardinality $2^{n-1}$ with minimum $w_{n,b} = (1,1,(0,1)^n,0,0)$ and the
  same maximum as $F_n$.
\end{lemma}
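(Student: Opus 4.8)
The plan is to identify $F_{n,b}$ explicitly with the set of compositions of $n$ and then to recognize the induced order as the refinement order, which is the boolean lattice on $n-1$ points.

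First I would pin down the shape of the elements. By \cref{two_peaks}, an element of $F_n$ lies in $F_{n,b}$ exactly when all its valleys have height $1$ (no valley at height $0$). By \cref{picfin} such an element ends with $(0,1,0)$ or $(0,1,0,0)$; the first is impossible for a block-indecomposable path, since a trailing factor $(0,1,0)$ would force an interior return to height $0$ just before the last two steps. Hence every $w \in F_{n,b}$ ends with exactly two zeros, so its level-decomposition (\cref{level_decomposition}) has $k=1$ and $w = (1, w_1, 1, 0, 0)$ for a unique Dyck path $w_1$ of size $n$. Translating the condition ``all valleys of $w$ have height $1$'' through the $+1$ height shift shows that $w_1$ has all its valleys at height $0$, i.e. $w_1$ is a concatenation of elementary triangles $1^{a}0^{a}$; conversely every such $w_1$ yields an element of $F_{n,b}$ by \cref{intrinsics}. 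This sets up a bijection between $F_{n,b}$ and compositions $(a_1,\dots,a_r)$ of $n$ (the sequence of triangle heights), giving cardinality $2^{n-1}$, with $w_{n,b}$ corresponding to the finest composition $(1,\dots,1)$ and the maximum of $F_n$ to the coarsest composition $(n)$.

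Next I would control the order. The key elementary fact is that every cover move $w \to w'$ slides a subpath to the north-west and therefore raises the path weakly pointwise; consequently the set of height-$0$ contact points can only shrink when going up. In particular, if $u$ is block-indecomposable and $u \leq z$, then $z$ is block-indecomposable as well. Since $F_n$ is an interval, hence convex, this shows that $F_{n,b}$ is convex in $\Dyck_{n+2}$, so its induced Hasse diagram is the restriction of that of $\Dyck_{n+2}$ and every saturated chain between two elements of $F_{n,b}$ stays inside $F_{n,b}$. It then remains to identify the internal cover relations: I would check that the only upward cover preserving the triangle shape is the complete fusion of two adjacent triangles $1^{a}0^{a}1^{b}0^{b} \to 1^{a+b}0^{a+b}$, realized by a single edge $M(w,x,i)$ with $i$ chosen to slide all the way up (any partial slide would create a valley of height $\geq 2$, leaving $F_n$), and dually that the downward covers split one triangle into two. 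These are exactly the covers of the refinement order on compositions of $n$.

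Finally, I would conclude. The bijection above carries the cover relations of $F_{n,b}$ onto those of the refinement poset of compositions of $n$; since a finite poset is determined by its Hasse diagram, $F_{n,b}$ is isomorphic to that poset, which via the division-point encoding of compositions is the boolean lattice $2^{[n-1]}$, of cardinality $2^{n-1}$. The finest and coarsest compositions give precisely the announced minimum $w_{n,b}$ and the common maximum with $F_n$. The main obstacle is the bookkeeping of the third paragraph: one must verify that no internal cover relation other than triangle fusion/splitting occurs and that fusion is a single edge of $\Gamma_{n+2}$ rather than a longer chain. The pointwise-monotonicity observation, together with the convexity of the interval $F_n$, is what makes this manageable.
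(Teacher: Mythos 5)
Your argument is correct and follows essentially the same route as the paper's: both identify $F_{n,b}$ as the paths with all valleys at height $1$ and parametrize them by a $2^{n-1}$-element set (your compositions of $n$ are the paper's subsets of valleys of $w_{n,b}$ containing the rightmost one), then identify the covers as complete fusions of adjacent parts/valley deletions. Your added convexity observation, justifying that the induced Hasse diagram is the restricted one, is a useful explicit step that the paper leaves implicit.
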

\begin{proof}
  Because all valleys must have height $1$, the shape of possible Dyck
  paths is strongly constrained by \cref{intrinsics}. The rightmost
  peak must have height $2$, and this forces at least one valley just
  before the final $(1,0,0)$. Every subset of the valleys of $w_{n,b}$
  that contains this rightmost one defines a unique element in the
  interval. The induced partial order is given by inclusion of subsets
  of valleys.
\end{proof}

\begin{remark}
  \label{split_Fb}
  This boolean lattice on $F_{n,b}$ can be written as the disjoint union of
  two boolean lattices of half cardinality: elements ending with
  $(1,0,1,0,0)$ (bottom part) and the others (top part).
\end{remark}

\begin{lemma}
  \label{booleen_bot}
  The subset of $F_n$ of elements having only valleys of height $0$ is
  a boolean lattice of cardinality $2^{n-1}$.
\end{lemma}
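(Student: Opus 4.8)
The plan is to first pin down exactly which Dyck paths belong to this subset, then read off both its cardinality and its order structure from a combinatorial parametrization by block decompositions.

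First I would use \cref{intrinsics}: an element of $F_n$ having all valleys at height $0$ touches the horizontal axis at each of its valleys, hence decomposes into blocks that are block-indecomposable with no interior valley; such a block must be a single peak $1^{a}0^{a}$. The opening $(1,1)$ forces the first peak to have height $a_1 \geq 2$. I would then observe that the ending $(0,1,0,0)$ permitted by \cref{intrinsics} is incompatible with having only valleys of height $0$: the subword $(0,1)$ it contains is a valley, necessarily at height $0$ here, so after it the path is at height $1$ and the two following $0$'s would drop it below the axis. Hence every element of the subset ends with $(0,1,0)$, which forces the last block to be $(1,0)$. Thus the subset is exactly the set of paths $1^{a_1}0^{a_1}\cdots 1^{a_m}0^{a_m}$ in $\Dyck_{n+2}$ with $\sum_i a_i = n+2$, $a_1 \geq 2$ and $a_m = 1$.

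Next I would parametrize these paths by their interior touch points, equivalently by the partial sums $a_1,\, a_1+a_2,\, \dots,\, a_1+\cdots+a_{m-1}$ read as cut positions inside $\{1,\dots,n+1\}$. The condition $a_1 \geq 2$ means position $1$ is never a cut, while $a_m = 1$ means the position $n+1$ is always a cut; so the free data is an arbitrary subset $S$ of the $(n-1)$-element set $\{2,\dots,n\}$. This is the standard compositions-to-subsets bijection, and it immediately gives the count $2^{n-1}$, with the finest choice $S=\{2,\dots,n\}$ corresponding to $(1,1,0,0,(1,0)^{n})$ and the coarsest $S=\emptyset$ to $(1^{n+1},0^{n+1},1,0)$.

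It then remains to show that the induced order matches reverse inclusion of these subsets, which is the delicate point. For the forward direction I would invoke the fact (used to decompose intervals, just before \cref{facto_tous}) that covering moves only raise heights, so if $u \leq u'$ then every axis touch point of $u'$ is also one of $u$; since here touch points are exactly the cuts, this gives $S(u') \subseteq S(u)$. For the converse I would check that erasing one interior cut, i.e. fusing two consecutive peaks $1^{a_i}0^{a_i}1^{a_{i+1}}0^{a_{i+1}}$ into $1^{a_i+a_{i+1}}0^{a_i+a_{i+1}}$, is a single dexter cover move: the second peak is a movable subpath preceded by exactly $a_i$ zeros, and the move $M(w,x,a_i)$ slides it all the way up onto its left neighbour. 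Composing such merges shows that $S(u')\subseteq S(u)$ implies $u \leq u'$, the forbidden fusion of the final $(1,0)$ never being needed since the cut at $n+1$ lies in every $S$. The main obstacle is precisely this order analysis, namely exhibiting the explicit merging cover move and confirming that no extra relations appear, together with the careful exclusion of the $(0,1,0,0)$ ending; once it is in place, the subset is order-isomorphic to $2^{\{2,\dots,n\}}$ under reverse inclusion, which is a boolean lattice of cardinality $2^{n-1}$.
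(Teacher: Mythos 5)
Your proof is correct and follows essentially the same route as the paper's, which simply declares the argument analogous to that of \cref{booleen_top}: constrain the shape via \cref{intrinsics}, parametrize the elements by subsets of the valleys of the minimal element $(1,1,0,0,(1,0)^n)$ (the rightmost valley being forced), and identify the induced order with inclusion of these subsets. Your write-up usefully supplies the details the paper leaves implicit, namely the single-peak block decomposition, the exclusion of the $(0,1,0,0)$ ending, and the verification that merging two adjacent peaks is a single cover move.
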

\begin{proof}
  The proof is very similar to the proof of \cref{booleen_top}.
\end{proof}

Let us now define a map $\rho$ from $F_n$ to some set of words of
length $n$ in the alphabet $\{0,1,2\}$. Let $w$ be a Dyck path in
$F_n$. One reads the word $w \in F_n$ from left to right, while
keeping track of an integer $N_2$ (initially set to $0$) and some
prefix of the image $\rho(w)$ (initially the empty word). When two
consecutive $1$ are read in $w$ (except the first two letters of $w$),
the integer $N_2$ is increased by $1$. When a valley of $w$ is read
(with height $h$ being either $0$ or $1$ by \cref{hauteur01}), the
word $[h,2^{N_2}]$ (where the power means that the letter $2$ is repeated) is appended to the current prefix, and $N_2$ is
then set back to $0$. The result $\rho(w)$ is the prefix obtained
after reading all of $w$. The length of $\rho(w)$ is $n$ because every
letter $1$ in $w$ (except the two initial ones) contributes a letter in $\rho(w)$.

For example, the image by $\rho$ of $(1,1,1,0,0,1,0,0,1,0) \in F_3$ is
$[1,2,0]$. The minimal element of $F_n$ is mapped by $\rho$ to the
word $[0,\dots,0]$, and the maximal element to the word
$[1,2,\dots,2]$. The image of the Dyck path $w_{n,b}=(1,1,(0,1)^n,0,0)$
is $[1,\dots,1]$.

By the map $w \mapsto \rho(w)$, the number of valleys at height $0$
(resp $1$) of $w$ becomes the number of letters $0$ (resp. $1$) in
$\rho(w)$.

The construction of $\rho$ can be reversed as follows, proving that it
is injective. Starting from any element $z$ in the image $\rho(F_n)$:

(1) split $z$ as a sequence of bricks $[1,2,\dots,2]$ and
$[0,2,\dots,2]$, formed by a letter $0$ or $1$ and the maximal
sequence of following $2$.

(2) start a new Dyck path at height $0$. For each brick, use the number $N_2$ of $2$ in this
brick to move up to the appropriate height (one more for the initial
brick), then move down to height $0$ or $1$ according to the first
letter of the brick.

(3) when the list of bricks is exhausted, move up by one step and go
down to height $0$.

For example, one can find in this way that the pre-image of $[1,0,2]$ by $\rho$, which has
two bricks $[1]$ and $[0,2]$, is $(1,1,0,1,1,0,0,0,1,0)$.

\begin{lemma}
  \label{cover_increase}
  For a cover move $w \to w'$ in $F_n$, the words $\rho(w)$ and
  $\rho(w')$ differ by exactly one letter, which increases.
\end{lemma}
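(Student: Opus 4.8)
The plan is to classify the cover moves $w \to w'$ in $F_n$ according to the combinatorial description of $\rho$, and to verify in each case that exactly one letter of $\rho$ changes and that it strictly increases. Since $F_n$ is an interval and its elements are characterised intrinsically by \cref{intrinsics}, a cover move $w \to w'$ is a single sliding step of a subpath in the north-west direction. The key observation is that $\rho(w)$ records precisely two pieces of data at each valley: the height $h \in \{0,1\}$ of that valley (the ``value'' letter $0$ or $1$), and the number $N_2$ of double-rises $(1,1)$ accumulated since the previous valley (the run of $2$'s following it). So I would show that any single slide alters exactly one of these two quantities at exactly one position, and always upwards in the natural order on $\{0,1,2\}$.

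First I would enumerate the possible cover moves using the structure already established. By \cref{hauteur01} and \cref{ht_decroit}, every element of $F_n$ has valleys only at heights $0$ and $1$, weakly decreasing from left to right. A cover move slides some subpath up by one step over one or more preceding $0$'s. The slides fall into a small number of types: (a) raising a valley of height $0$ to height $1$, which is exactly the move used in the proof of \cref{intrinsics}(1); (b) merging two consecutive peaks when all valleys between them already sit at height $1$; and (c) a slide strictly internal to the left ``staircase'' that increases $N_2$ for a later valley by creating a new double-rise. I would check that type (a) changes one value letter from $0$ to $1$ (leaving all $2$-runs intact, since no double-rise is created or destroyed), hence changes $\rho$ in a single coordinate, increasing it; and that types (b)/(c) convert a letter that was read as part of a valley's contribution into a letter $2$, or lengthen a $2$-run, again by one letter and upward.

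The main obstacle will be bookkeeping the effect of a slide on the counter $N_2$ and confirming that the length of $\rho(w)$ is preserved under the move, so that ``differ by exactly one letter'' is literally a single-coordinate change rather than an insertion or deletion. Because the length of $\rho$ equals $n$ for every element of $F_n$ (each non-initial letter $1$ contributes one letter), no cover move can change the length, and this rigidity is what forces the change to be a genuine substitution of one symbol. I would therefore argue that a slide preserves the total number of non-initial up-steps and the total number of valleys is non-increasing by at most the conversion of one valley into an interior double-rise; combined with the fixed length $n$, this pins the change to a single position. The delicate point is to see that raising a valley from $0$ to $1$ (type (a)) does not silently shift the $N_2$-counts of later valleys: since the subpath being slid stays above height $1$ and is reinserted just after a $1$, no new double-rise is created to its right, so all subsequent $2$-runs are unchanged.

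Having handled the three types, I would conclude by noting that in the ordering $0 < 1 < 2$ on the alphabet, every one of these single-letter changes strictly increases the affected letter, which is exactly the assertion. The verification that these are the only cover-move types is immediate from \cref{intrinsics}, since any down-cover from $w$ must land in $Q_n = F_n$, ruling out slides that would create a valley of height $\geq 2$ or violate the decreasing-valley condition; this is the same constraint already exploited in \cref{ht_decroit} and the proof of \cref{intrinsics}, so I would invoke those rather than re-derive the admissibility of moves.
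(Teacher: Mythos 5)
Your proposal follows essentially the same route as the paper: classify the cover moves inside $F_n$ into those that raise a valley from height $0$ to height $1$ (one value-letter goes from $0$ to $1$) and those that merge two peaks (two bricks $[x,2^M],[x,2^N]$ with equal first letter $x$ become $[x,2^{M+1+N}]$, so exactly one letter $x\in\{0,1\}$ becomes a $2$), with the fixed length $n$ of $\rho(w)$ ruling out insertions or deletions. The only cosmetic issue is that your case (c) is not genuinely separate from (b) — creating a new double rise always destroys a valley and merges two bricks, and this can happen with the separating valley at height $0$ as well as height $1$, not only along the initial staircase — but the effect on $\rho$ you record for it is the correct one.
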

\begin{proof}
  One has to distinguish two kinds of cover moves. If the number of
  peaks is unchanged, then one valley of height $0$ becomes a valley
  of height $1$. On the image by $\rho$, only the letter encoding this
  height is modified. Otherwise, the number of peaks is decreased by
  one and the two associated bricks in the image by $\rho$ become just
  one brick. The valleys corresponding to these two bricks have the
  same height. In the image by $\rho$, this means that one subword
  $[x,2^M,x,2^N]$ is replaced by $[x,2^{M+1+N}]$, where $x$ is either
  $0$ or $1$.
\end{proof}

\medskip

Let us define $F_{n,0}$ and $F_{n,1}$ as the partition of $F_n$
according to the height of the first valley. This corresponds to the
decomposition of $\rho(F_n)$ according to the first letter.

\begin{lemma}
  \label{intervalle_Fn1}
  For $n \geq 1$, the set $F_{n,1}$ is the interval between
  the element $w_{n,1} = (1,1,0,1,0,0,(1,0)^{n-1})$ and the maximum of $F_n$.
\end{lemma}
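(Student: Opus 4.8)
The plan is to prove the statement entirely through the encoding $\rho$, whose first letter records exactly the height of the first valley, so that $F_{n,1}$ is precisely the set of $w\in F_n$ with $\rho(w)$ beginning with the letter $1$. The goal is to show that $F_{n,1}$ is an up-set (order filter) of $F_n$ whose unique minimal element is $w_{n,1}$; since $F_n=[\min F_n,\max F_n]$ is an interval and $w_{n,1}\ge \min F_n$, this identifies $F_{n,1}$ with $\{x\in F_n:\ x\ge w_{n,1}\}=[w_{n,1},\max F_n]$. As a preliminary check, using \cref{intrinsics}, I would verify that $w_{n,1}=(1,1,0,1,0,0,(1,0)^{n-1})$ lies in $F_n$ (it starts with $(1,1)$, its valley heights are $1,0,\dots,0$ hence weakly decreasing, and it ends with $(0,1,0)$ or $(0,1,0,0)$), that its first valley has height $1$, and that $\rho(w_{n,1})=[1,0^{n-1}]$; likewise $\max F_n\in F_{n,1}$.

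First I would prove that $F_{n,1}$ is an up-set. By \cref{cover_increase}, a cover move in $F_n$ changes a single letter of $\rho$ and strictly increases it. The first letter of $\rho$ is the height of the first valley, which by \cref{hauteur01} is $0$ or $1$; a merge of peaks only turns a non-initial brick-head into a $2$, so it never alters the first letter, while a height-raising move can only send the first letter from $0$ to $1$. Hence the first letter of $\rho$ is weakly increasing along cover moves and stays in $\{0,1\}$; in particular the property ``$\rho$ begins with $1$'' is preserved upward, so $F_{n,1}$ is an up-set of $F_n$.

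The heart of the proof, and the step needing the most care, is to show that $w_{n,1}$ is the unique minimal element of $F_{n,1}$. Since $F_{n,1}$ is an up-set, $w$ is minimal in $F_{n,1}$ if and only if every lower cover of $w$ in $F_n$ lies in $F_{n,0}$, i.e. no lower cover of $w$ changes a letter of $\rho$ other than the first. I would enumerate the lower covers of a given $w\in F_{n,1}$ by reading the two move types of \cref{cover_increase} backwards: a lower cover either splits one brick $[x,2^{K}]$ with $K\ge 1$ into $[x,2^{M},x,2^{N}]$ (lowering a $2$ sitting at a non-initial position, since the newly created brick-head comes after the first one), or lowers the last valley of height $1$ from $1$ to $0$ (the only height-$1$ valley whose lowering preserves the decreasing constraint on valley heights). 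Consequently $w$ has a lower cover touching a non-initial letter of $\rho$ exactly when $\rho(w)$ contains a $2$, or contains a second letter equal to $1$. Minimality therefore forces $\rho(w)$ to have no $2$ and exactly one letter $1$, necessarily in first position, i.e. $\rho(w)=[1,0^{n-1}]$; by injectivity of $\rho$ this gives $w=w_{n,1}$.

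Finally, in a finite poset every element of an up-set lies above a minimal element, so an up-set with a single minimal element is the principal filter it generates. Thus $F_{n,1}=\{x\in F_n:\ x\ge w_{n,1}\}$, and combined with $w_{n,1}\ge\min F_n$ and $\max F_n\in F_{n,1}$ this yields $F_{n,1}=[w_{n,1},\max F_n]$. The only delicate bookkeeping is the backward reading of \cref{cover_increase} in the third paragraph: one must confirm that the claimed splitting and lowering operations genuinely are cover relations producing elements still in $F_n$ (valley heights remaining weakly decreasing and confined to $\{0,1\}$), and that they exhaust the lower covers available. Everything else reduces to the monotonicity of the first $\rho$-letter and to the injectivity of $\rho$.
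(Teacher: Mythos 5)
Your proof is correct and shares the paper's skeleton --- show that $F_{n,1}$ is an up-set of $F_n$ and that $w_{n,1}$ is its unique minimal element --- but the two arguments run on different engines. The paper works directly on the paths: upward closure is argued by observing that a cover move can only delete a valley of height $1$ when it is followed by another valley of height $1$, and uniqueness of the minimal element comes from a global descent, lowering the highest peaks of any $w \neq w_{n,1}$ with first valley at height $1$ until one reaches the canonical shape $(1,(1,0)^k,0,(1,0)^\ell)$, which visibly lies above $w_{n,1}$. You instead route everything through $\rho$ and \cref{cover_increase}: monotonicity of the first letter of $\rho$ gives the up-set property, and a local analysis of lower covers (brick splittings, and lowering the last height-$1$ valley) shows that any $w\in F_{n,1}$ with $\rho(w)\neq[1,0^{n-1}]$ still has a lower cover inside $F_{n,1}$, so only $w_{n,1}$ can be minimal; note that for the minimality of $w_{n,1}$ itself the forward direction of \cref{cover_increase} already suffices. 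Your route is more systematic, but it does lean on a converse to \cref{cover_increase} --- that the candidate splitting and lowering operations really are cover relations landing in $F_n$ --- which that lemma does not supply; you flag this honestly, and the checks do go through (the new valley created by a split has the same height as the valley of the brick being split, so the weak-decrease condition of \cref{ht_decroit} and the membership criterion of \cref{intrinsics} are preserved, and the reverse slides are legitimate moves $M(w',x,i)$). The paper's descent is shorter but leaves its key step (``lowering the highest peaks'') to the reader at roughly the same level of detail, so the trade-off is a wash; your version has the added benefit of setting up the lower-cover bookkeeping that \cref{cover_increase} is used for elsewhere in the section.
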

\begin{proof}
  The property ($\clubsuit$) of having the first valley at height $1$ is
  preserved by cover moves inside $F_n$, that can only delete a valley
  of height $1$ if it is followed by another valley of height $1$.

  Therefore all the elements that are greater than $w_{n,1}$ have property ($\clubsuit$).

  Conversely, for any element $w \not= w_{n,1}$ with property ($\clubsuit$),
  one can find a smaller element $w'$ with property ($\clubsuit$). Either
  one can easily find such an element $w'$ having the shape
  $(1,(1,0)^k,0,(1,0)^\ell)$ (by lowering the highest peaks) or $w$ itself
  has this shape. In the latter case, one can take $w'= w_{n,1}$.

  It follows that $w_{n,1}$ is the unique minimal element with property ($\clubsuit$).
\end{proof}

Let us also denote
$F_{n,1,0}$, $F_{n,1,1}$ and $F_{n,1,2}$ for the partition of
$F_{n,1}$ according to the last letter of the image by $\rho$. 

From \cref{cover_increase} and \cref{intervalle_Fn1}, one deduces:
\begin{lemma}
  The set $F_{n,1,2}$ is an upper ideal in $F_n$.
\end{lemma}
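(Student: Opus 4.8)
The plan is to combine the two immediately preceding lemmas. Since an upper ideal is a subset closed under passing to larger elements, by transitivity it suffices to check closure under a single cover move. So I would take $w \in F_{n,1,2}$ together with a cover move $w \to w'$ inside $F_n$, and argue that $w' \in F_{n,1,2}$.

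First I would record that $F_{n,1}$ is itself an upper ideal in $F_n$. Indeed, by \cref{intervalle_Fn1} the set $F_{n,1}$ is the interval $[w_{n,1}, \max F_n]$, and since $\max F_n$ is the global maximum of $F_n$, this interval is exactly the principal upper ideal generated by $w_{n,1}$ in $F_n$. Consequently, for any cover $w \to w'$ with $w \in F_{n,1}$ we automatically get $w' \in F_{n,1}$; in particular this applies when $w \in F_{n,1,2} \subseteq F_{n,1}$. It then remains only to control the last letter of $\rho(w')$.

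Here I would invoke \cref{cover_increase}: the words $\rho(w)$ and $\rho(w')$ differ in exactly one position, and the letter there strictly increases. By the definition of $F_{n,1,2}$, the last letter of $\rho(w)$ equals $2$, which is the largest letter of the alphabet $\{0,1,2\}$. A strict increase at the last position is therefore impossible, so the single modified position cannot be the final one; hence the last letter of $\rho(w')$ is again $2$. Together with $w' \in F_{n,1}$, this yields $w' \in F_{n,1,2}$, which is the desired closure.

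The argument is short, and the only point requiring a little care is the observation that the top letter $2$ cannot increase, which is precisely what excludes the final position from being the modified one. I do not expect any genuine obstacle beyond verifying that $F_{n,1}$ is really a principal upper ideal, which is immediate from \cref{intervalle_Fn1}.
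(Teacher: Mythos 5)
Your proof is correct and follows exactly the route the paper intends: the paper gives no written proof, merely stating that the lemma follows from \cref{cover_increase} and \cref{intervalle_Fn1}, and your argument is precisely the fleshing-out of that deduction (using \cref{intervalle_Fn1} to see that $F_{n,1}$ is an upper ideal and \cref{cover_increase} to see that a final letter $2$ cannot be the one modified position).
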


\begin{lemma}
  \label{F1_F12}
  For $n \geq 1$, the map that inserts $(1,0)$ at the top of the
  next-to-rightmost peak defines a bijection from $F_{n,1}$ to
  $F_{n+1,1,2}$. This corresponds to adding $2$ at the end of
  $\rho(w)$.
\end{lemma}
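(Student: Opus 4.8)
The plan is to route the whole statement through the encoding $\rho$, whose injectivity is already available. Write $\Phi$ for the insertion map. First I would check it is well defined: by \cref{two_peaks} every element of $F_n$ has at least two peaks, so the next-to-rightmost peak $P$ (the second peak counted from the right) always exists. Inserting $(1,0)$ at its top merely raises $P$ by one unit, replacing a local factor $\dots,1,0,\dots$ by $\dots,1,1,0,0,\dots$; this creates no new valley and leaves every valley of $w$ (both its height and its position relative to the others) untouched, while the opening $(1,1)$ and the final letters are preserved because $P$ lies strictly before the last valley and the last peak. Hence, by the description of membership in \cref{intrinsics}, the image $\Phi(w)$ still begins with $(1,1)$, still has its valleys only at heights $0$ or $1$ and weakly decreasing from left to right (these properties being inherited from $w$ through \cref{hauteur01} and \cref{ht_decroit}), and still ends with $(0,1,0)$ or $(0,1,0,0)$. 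Thus $\Phi(w)\in F_{n+1}$, and since its first valley keeps height $1$, in fact $\Phi(w)\in F_{n+1,1}$.

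Next I would compute the effect of $\Phi$ on $\rho$. The peak $P$ sits immediately before the last valley $V$ of $w$, so the ascent climbing to $P$ is exactly the ascent whose consecutive pairs $1,1$ are accumulated into the counter $N_2$ that is emitted when $V$ is read. Raising $P$ by one unit inserts precisely one new pair $1,1$ into that ascent and changes nothing before the preceding valley; it therefore increases this particular $N_2$ by one and leaves all earlier bricks intact. Consequently the brick attached to $V$, which is the last brick of $\rho(w)$, acquires one extra letter $2$, so that $\rho(\Phi(w))$ is $\rho(w)$ with a single $2$ appended. In particular the last letter of $\rho(\Phi(w))$ is $2$, which together with the previous paragraph shows $\Phi(w)\in F_{n+1,1,2}$ and establishes the asserted description of $\Phi$ as ``adding $2$ at the end of $\rho(w)$''. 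The one configuration deserving separate mention is $k=2$ (two peaks and a single valley), where $P$ is the initial peak; there the new pair $1,1$ augments the counter of the unique brick, which is again the last brick, so the conclusion is unchanged.

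For bijectivity I would argue at the level of words. From the reverse construction of $\rho$ given above, together with \cref{hauteur01} and \cref{ht_decroit}, the image $\rho(F_m)$ is exactly the set of length-$m$ words over $\{0,1,2\}$ whose first letter is not $2$ and whose subsequence of non-$2$ letters is weakly decreasing. Hence $\rho(F_{n,1})$ consists of such words of length $n$ starting with $1$, while $\rho(F_{n+1,1,2})$ consists of such words of length $n+1$ starting with $1$ and ending with $2$. Appending a $2$ is a bijection between these two sets: it fixes the first letter and the subsequence of non-$2$ letters, so it lands in the second set, and its inverse is the deletion of the final letter (necessarily a $2$), which likewise preserves both defining conditions. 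Since $\rho$ is injective on every $F_m$ and $\Phi = \rho^{-1}\circ(\text{append }2)\circ\rho$, the map $\Phi$ is a bijection from $F_{n,1}$ onto $F_{n+1,1,2}$.

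The delicate point, and the step I would treat most carefully, is the computation in the second paragraph: one must be sure that the single new pair $1,1$ produced at the top of the next-to-rightmost peak is tallied into the last brick of $\rho$ and not an earlier one. This rests on the geometric observation that the next-to-rightmost peak is exactly the peak immediately preceding the last valley, so its ascent is the segment whose $N_2$ is emitted last; once that is granted, the identity $\rho(\Phi(w)) = \rho(w)\,2$, and with it the entire statement, follows without further difficulty.
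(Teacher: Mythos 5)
Your treatment of the direct map agrees with the paper's: both verify via \cref{intrinsics} that inserting $(1,0)$ at the top of the next-to-rightmost peak lands in $F_{n+1,1}$, and both observe that, since that peak is the one immediately preceding the last valley, the extra consecutive pair $1,1$ is tallied into the last brick, so that $\rho(\Phi(w))=[\rho(w),2]$. Where you genuinely diverge is the surjectivity. The paper builds the inverse directly on Dyck paths -- remove $(1,0)$ from the top of the next-to-rightmost peak of $z\in F_{n+1,1,2}$ -- and checks membership in $F_{n,1}$ through \cref{intrinsics}, the only delicate points being that the opening $(1,1)$ could be destroyed (ruled out because that would force $z$ to be the maximum of $F_1$; in fact the final letter $2$ of $\rho(z)$ forces the relevant peak to have height at least $3$ when it is the first one) and that the next-to-rightmost peak survives the removal (guaranteed by the final $2$). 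You instead push everything through $\rho$: you claim that $\rho(F_m)$ is exactly the set of length-$m$ words over $\{0,1,2\}$ with first letter in $\{0,1\}$ and weakly decreasing subsequence of non-$2$ letters, and then bijectivity reduces to the trivial append/delete-a-final-$2$ correspondence on words, combined with the injectivity of $\rho$. This is an attractive shortcut that sidesteps the membership checks for the inverse map, and the claimed characterization is correct; but it is the one step you assert rather than prove, and it does require an argument (apply the reverse construction to such a word, check via \cref{intrinsics} that the result lies in $F_m$ -- the weak decrease of valley heights is what keeps every peak strictly above the following valley -- and count the letters $1$). Note that you cannot appeal to the paper's own description of $\rho(F_n)$ as $Z_n$, since that proposition is proved later \emph{using} the present lemma; also, the references \cref{hauteur01} and \cref{ht_decroit} only give the necessity half of your characterization, while the sufficiency half rests on \cref{intrinsics} and the explicit reverse construction. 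With that step written out, your proof is complete and of essentially the same length as the paper's.
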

\begin{proof}
  This is easily checked directly for $n=1$. Let us assume that $n \geq 2$.

  First, one deduces from \cref{intrinsics} that adding $(1,0)$ at the
  top of the next-to-rightmost peak defines a map from $F_{n}$ to
  $F_{n+1}$. This clearly preserves the height of the first valley,
  hence defines an injective map from $F_{n,1}$ to $F_{n+1,1}$.

  Using the definition of $\rho$, this application does add $2$ at the
  end of the image by $\rho$, because it increases the height of the
  next-to-last peak by one.  Therefore its image is contained in
  $F_{n+1,1,2}$.

  % The minimum of $F_{n,1}$ is mapped to an element that
  % covers the minimum of $F_{n+1,1}$.  The maximum of $F_{n,1}$ is
  % mapped to the maximum of $F_{n+1,1}$.

  % Let us prove that every cover relation in $F_{n,1}$ is mapped to
  % a cover relation. Every element in $F_{n,1}$ (except the maximum)
  % has at least $3$ peaks. Therefore the next-to-rightmost peak is not
  % the first one. The covers moves that do not involve this peak
  % clearly commutes with the addition of $(1,0)$ on top of it. The
  % other cover moves can either (c0) just move up the peak, (c1) merge
  % this peak with the previous one or (c2) slide a subpath on the right
  % of the peak. In cases (c0) and (c1), adding $(1,0)$ also commutes with
  % the covering moves. Case (c2) can only happen if the height of this
  % peak is at least two, and then the subpath cannot be slided up to
  % the top of the peak (by \cref{picfin} for instance). So the peak
  % remains the next-to-rightmost peak, and the commutation with adding
  % $(1,0)$ follows.

  Conversely for any element $z$ of $F_{n+1,1,2}$, one can remove
  $(1,0)$ on the top of the next-to-rightmost peak to define a Dyck
  path $x$. This does not change the height of the valleys, in
  particular the first valley of $x$ has height $1$. One just needs to
  prove that $x$ is in $F_n$. Using \cref{intrinsics}, one needs only
  to check the conditions at the beginning and at the end. The
  condition that $x$ starts by $(1,1)$ can fail if and only if the
  next-to-rightmost peak of $z$ is its first peak and has height
  $2$. This only happen when $z$ is the maximal element of $F_1$, but
  $z$ belongs to $F_{n+1}$ for some $n \geq 1$. The condition at the
  end is ensured by the final letter $2$ in $\rho(z)$, which implies
  that removing $(1,0)$ on its top does not delete the
  next-to-rightmost peak.
\end{proof}

Let us define a map $\mu$ on $F_n$ by adding $(1,0)$ at the end.
\begin{lemma}
  The image of $\mu$ is contained in $F_{n+1}$.
\end{lemma}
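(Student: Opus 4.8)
The plan is to verify directly that $\mu(w) = (w,1,0)$ satisfies the intrinsic characterization of $F_{n+1}$ given in \cref{intrinsics}. Since $w \in F_n \subseteq \Dyck_{n+2}$ ends at height $0$, appending $(1,0)$ produces a genuine Dyck path of size $n+3 = (n+1)+2$, so $\mu(w) \in \Dyck_{(n+1)+2}$, and it remains only to check the four defining conditions of \cref{intrinsics}: that $\mu(w)$ starts with $(1,1)$, has all its valleys at height $0$ or $1$, has these heights weakly decreasing from left to right, and ends by $(0,1,0)$ or by $(0,1,0,0)$.

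First, the beginning of $\mu(w)$ agrees with that of $w$, so $\mu(w)$ still starts with $(1,1)$. The key observation is that appending $(1,0)$ to a path ending at height $0$ creates exactly one new valley, located at the junction: the final $0$ of $w$ followed by the initial $1$ of the appended block forms a subword $(0,1)$ whose local minimum is at height $0$. All the valleys of $w$ are unaffected, so they keep their heights in $\{0,1\}$ by the hypothesis $w \in F_n$ together with \cref{intrinsics}, and the single new valley has height $0$. Hence every valley of $\mu(w)$ lies at height $0$ or $1$.

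For the monotonicity, the valleys of $\mu(w)$ are exactly the valleys of $w$, whose heights are weakly decreasing by \cref{ht_decroit}, followed on the right by the new valley of height $0$. Since $0$ is the minimal possible height, appending such a valley at the far right preserves the weakly decreasing property. For the ending condition I use \cref{picfin}: if $w$ ends with $(0,1,0)$ then $\mu(w)$ ends with $(0,1,0,1,0)$, and if $w$ ends with $(0,1,0,0)$ then $\mu(w)$ ends with $(0,1,0,0,1,0)$; in both cases the last three letters are $(0,1,0)$, so $\mu(w)$ indeed ends by $(0,1,0)$.

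All four conditions of \cref{intrinsics} are therefore satisfied, which gives $\mu(w) \in F_{n+1}$. There is no genuine obstacle in this argument; the only point requiring attention is the junction valley, and it is harmless precisely because its height $0$ is minimal and it sits at the extreme right, so it can neither introduce a forbidden valley height nor break the weakly decreasing requirement.
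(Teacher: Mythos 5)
Your verification is correct and follows exactly the route the paper intends: the paper's own proof is the one-line remark that the claim ``follows easily from \cref{intrinsics}'', and your argument simply spells out that check (unchanged prefix $(1,1)$, one new valley at height $0$ at the junction, monotonicity preserved since $0$ is minimal and the new valley is rightmost, and the ending $(0,1,0)$). No gaps; the appeal to \cref{picfin} for the ending condition is even slightly more than needed, since the last letter of any non-empty Dyck path is already $0$.
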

\begin{proof}
  This follows easily from \cref{intrinsics}.
\end{proof}

Using the definition of $\rho$, one can check that adding $(1,0)$ at
the end of any $w$ in $F_n$ corresponds to adding $0$ at the end of
$\rho(w)$.

\begin{lemma}
  \label{F1_F10}
  For $n \geq 1$, the map $\mu$ is a bijection from $F_{n,1}$ to
  $F_{n+1,1,0}$.
\end{lemma}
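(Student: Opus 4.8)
The plan is to transport everything through the encoding $\rho$, using the two facts already in place: the preceding lemma, which gives $\mu(F_n)\subseteq F_{n+1}$, and the remark just above it that appending $(1,0)$ at the end of a path in $F_n$ amounts to appending the letter $0$ at the end of its image under $\rho$. Since $\rho$ is injective, it identifies $F_{n,1}$ with the words of $\rho(F_n)$ whose first letter is $1$, and $F_{n+1,1,0}$ with the words of $\rho(F_{n+1})$ whose first letter is $1$ and whose last letter is $0$. The lemma will then follow from the fact that appending $0$ is a bijection between these two sets of words.

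For the forward inclusion, let $w\in F_{n,1}$. The preceding lemma gives $\mu(w)\in F_{n+1}$, and $\rho(\mu(w))=\rho(w)\,0$. Appending a letter at the end does not change the first letter, which stays equal to $1$, so $\mu(w)\in F_{n+1,1}$, and its last letter is $0$, so in fact $\mu(w)\in F_{n+1,1,0}$. Injectivity of $\mu$ is immediate, since one recovers $w$ by deleting the last two letters $(1,0)$.

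The real content is surjectivity. Let $z\in F_{n+1,1,0}$, so that $\rho(z)$ starts with $1$ and ends with $0$. I would first argue that $z$ must end with $(1,0)$: because the last letter of $\rho(z)$ is $0$ rather than $2$, the last brick of $\rho(z)$ carries no trailing $2$'s and has leading letter $0$, which by the definition of $\rho$ forces the last valley of $z$ to sit at height $0$; combined with \cref{picfin} this pins down the ending of $z$ as $(0,1,0)$. Let $w$ be the path obtained by deleting this final $(1,0)$. By the correspondence recalled above, this deletion removes exactly the final $0$ from $\rho(z)$, so $\rho(w)$ starts with $1$ and has length $n$; and since $\rho(\mu(w))=\rho(w)\,0=\rho(z)$, injectivity of $\rho$ will give $\mu(w)=z$ once we know $w\in F_n$.

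It remains to check $w\in F_n$, which I would do through the intrinsic description of \cref{intrinsics}. Deleting the final $(1,0)$ does not touch the initial $(1,1)$, and it only removes the last valley of $z$, so by \cref{hauteur01} and \cref{ht_decroit} the surviving valleys of $w$ still have heights $0$ or $1$, weakly decreasing from left to right. The delicate point is the endpoint condition: I would show that, after truncation, the new last peak has height $1$ or $2$ according to whether the preceding valley sits at height $0$ or $1$ — using again that the last brick of $\rho(z)$ has no trailing $2$, which bounds the last ascent — so that $w$ ends with $(0,1,0)$ or $(0,1,0,0)$. For $n\ge 1$ the word $\rho(z)$ has length at least $2$ and carries both a leading $1$ and a trailing $0$, hence at least two bricks, which guarantees that the valley preceding the truncation point exists and makes this case analysis non-vacuous. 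I expect precisely this last step to be the main obstacle: translating the single condition ``last letter of $\rho(z)$ equals $0$'' into the exact geometric shape of the tail of $z$, and then confirming that truncation preserves the rather rigid endpoint condition of \cref{intrinsics}, is where the bookkeeping of heights, peaks and valleys must be done carefully. Once all four conditions of \cref{intrinsics} are verified, $w\in F_{n,1}$ and $\mu(w)=z$, completing surjectivity.
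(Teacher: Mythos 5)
Your proof is correct and follows essentially the same route as the paper: check the forward direction via the observation that appending $(1,0)$ appends $0$ to the $\rho$-image, and for surjectivity use the fact that the last letter of $\rho(z)$ being $0$ forces $z$ to end in $(1,0)$, then verify that truncation lands in $F_{n,1}$ via the intrinsic description of \cref{intrinsics}. You actually spell out the one point the paper leaves implicit — that the absence of a trailing $2$ in the last brick bounds the final ascent and hence guarantees the endpoint condition after truncation — and your count of bricks ensuring a preceding valley exists is the right safeguard.
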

\begin{proof}
  Applying $\mu$ on an element of $F_{n,1}$ gives an element
  of $F_{n+1,1,0}$. This is clearly an injective map.

  Conversely, any element $w$ of $F_{n,1,0}$ must end with $(1,0)$,
  because its last valley has height $0$. 

  Using \cref{intrinsics}, one can show that cutting this final
  $(1,0)$ gives an element of $F_{n,1}$ whose image by $\mu$ is
  $w$. The only required check is the condition at the end, which
  follows from the hypothesis that the last letter of $\rho(w)$ is
  $0$. This proves the surjectivity.
\end{proof}

\begin{lemma}
  \label{Fb_F11}
  The map that inserts $(1,0)$ just before the final letter $0$ defines a
  bijection from $F_{n,b}$ to $F_{n+1,1,1}$. This corresponds to
  adding $1$ at the end of $\rho(w)$.
\end{lemma}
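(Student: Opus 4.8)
The plan is to reduce the whole statement to the encoding $\rho$ together with its injectivity, rather than to argue with Dyck paths directly. The key is to identify the $\rho$-images of both sides. I claim that $\rho(F_{n,b})$ is exactly the set of words of length $n$ over $\{1,2\}$ beginning with $1$, while $\rho(F_{n+1,1,1})$ is exactly the set of words of length $n+1$ over $\{1,2\}$ beginning with $1$ and ending with $1$. The first identification follows because, by \cref{two_peaks}, block-indecomposable elements have no valley of height $0$, so $\rho$ produces no letter $0$, and the first valley being at height $1$ forces the initial letter $1$; since $\rho$ is injective and both this set of words and $F_{n,b}$ have cardinality $2^{n-1}$ by \cref{booleen_top}, the image is the whole set. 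For the second, an element of $F_{n+1,1,1}$ has last $\rho$-letter equal to $1$, so its last valley has height $1$; by \cref{ht_decroit} the valley heights are weakly decreasing, which forces all valleys to height $1$, hence block-indecomposability and the absence of the letter $0$.

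Next I would verify the stated effect on $\rho$, which is the computational heart of the argument. First I would record that every $w \in F_{n,b}$ ends with $(0,1,0,0)$: by \cref{picfin} it ends with $(0,1,0)$ or $(0,1,0,0)$, but the former would place a valley at height $0$, contradicting block-indecomposability. In this tail the last peak sits at height $2$ above a valley of height $1$, and inserting $(1,0)$ just before the final $0$ turns $(0,1,0,0)$ into $(0,1,0,1,0,0)$, replacing the descent by a peak followed by a fresh valley of height $1$, the two valleys separated by the single up-step $1$ rather than by a double $1$. Reading $\rho$, the portion up to and including the original last valley is untouched, and the new valley contributes a brick $[1,2^{0}]=[1]$; hence $\rho(w')=\rho(w)\cdot 1$. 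I would check at the same time, via \cref{intrinsics}, that $w'$ lies in $F_{n+1}$ (it still starts with $(1,1)$, all its valleys have height $1$, and it ends with $(0,1,0,0)$), so that indeed $w'\in F_{n+1,1,1}$.

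With these two facts the conclusion is immediate. The operation $z\mapsto z\cdot 1$ is a bijection from words of length $n$ over $\{1,2\}$ beginning with $1$ onto words of length $n+1$ over $\{1,2\}$ beginning and ending with $1$, i.e. from $\rho(F_{n,b})$ onto $\rho(F_{n+1,1,1})$. Since $\rho$ is injective on $F_{n+1}$, the insertion map is injective, and by the previous paragraph its image realizes exactly the $\rho$-values filling out $\rho(F_{n+1,1,1})$; injectivity of $\rho$ then forces the image to be all of $F_{n+1,1,1}$.

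I expect the main obstacle to be the local $\rho$-computation of the second paragraph: one must be certain that the insertion introduces no spurious letter $2$ (equivalently, no new double up-step before the new valley) and does not disturb the brick produced by the original last valley. Pinning down the tail transformation $(0,1,0,0)\mapsto(0,1,0,1,0,0)$ and tracking the counter $N_2$ across it is precisely what makes the clean formula $\rho(w')=\rho(w)\cdot 1$ hold, and is where the block-indecomposability of $F_{n,b}$ is genuinely used.
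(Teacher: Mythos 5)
Your proof is correct, and the forward half (computing that the tail $(0,1,0,0)$ becomes $(0,1,0,1,0,0)$, so that $\rho(w')=\rho(w)\cdot 1$, and checking membership in $F_{n+1}$ via \cref{intrinsics}) matches the substance of the paper's argument, which reaches the same insertion map by identifying it with the bottom of the two inclusions $F_{n,b}\hookrightarrow F_{n+1,b}$ coming from \cref{booleen_top} and \cref{split_Fb}. Where you genuinely diverge is in proving surjectivity: the paper constructs the inverse directly on Dyck paths, showing that any $w\in F_{n+1,1,1}$ must end with $(0,1,0,1,0,0)$ (all valleys at height $1$ by \cref{ht_decroit}, and the tail forced by the final $\rho$-letter being $1$ rather than $2$) and then deleting $(1,0)$; you instead pin down both $\rho$-images as explicit sets of $\{1,2\}$-words via the cardinality $\#F_{n,b}=2^{n-1}$ from \cref{booleen_top} together with the injectivity of $\rho$, and transport the obvious word-level bijection $z\mapsto z\cdot 1$. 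Your route is clean and all the ingredients you cite (\cref{two_peaks}, \cref{ht_decroit}, \cref{picfin}, injectivity of $\rho$) are established earlier in the section, so it is a valid alternative; its cost is that it leans on the counting result and on injectivity of $\rho$ as black boxes, whereas the paper's explicit inverse gives slightly more information (the exact tail of every element of $F_{n+1,1,1}$) for free.
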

\begin{proof}
  By \cref{booleen_top} and the remark following it, there are two
  inclusions of the set $F_{n,b}$ in $F_{n+1,b}$. One can check that
  the bottom inclusion is given by inserting $(1,0)$ just before the
  final letter $0$. Through the application of $\rho$, this amounts to
  adding a final $1$ to $\rho(w)$. The image is therefore in
  $F_{n+1,1,1}$. This is clearly an injective map.

  Conversely, let $w$ in $F_{n+1,1,1}$. Because the last letter in
  $\rho(w)$ is $1$, and using \cref{ht_decroit}, all valleys of $w$
  have height $1$. Moreover the Dyck path $w$ must end with $(1,0,0)$,
  because it is smaller than the maximum of $F_n$. But it must in fact
  end with $(0,1,0,1,0,0)$, for otherwise the final letter of $\rho(w)$
  would be $2$. Hence one can remove $(1,0)$ just before the final
  $0$, and get an element of $F_{n,b}$, whose image is $w$.
\end{proof}

\begin{lemma}
  \label{Fx_F0}
  The map that slides down the subpath after the first valley defines
  a bijection from the subset of $F_{n,1}$ where only the first valley
  has height $1$ to $F_{n,0}$. It amounts to replacing the first
  letter of $\rho(w)$ by $0$.
\end{lemma}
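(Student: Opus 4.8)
The plan is to push everything through the encoding $\rho$, whose injectivity is already available, so that the geometric slide becomes a transparent operation on words. First I would translate both sides into conditions on $\rho$. The first letter of $\rho(w)$ is the height of the first valley of $w$, so $F_{n,1}$ is $\{w : \rho(w) \text{ starts with } 1\}$ and $F_{n,0}$ is $\{w : \rho(w) \text{ starts with } 0\}$. The extra condition defining the source, that \emph{only} the first valley has height $1$, says that no other valley has height $1$; by \cref{ht_decroit} the valley heights are weakly decreasing, so this is the same as requiring $\rho(w)$ to have first letter $1$ and every other non-$2$ letter equal to $0$. By the same monotonicity, $\rho(F_{n,0})$ is exactly the set of admissible words whose non-$2$ letters are all $0$. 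Hence, at the level of words, replacing the first letter by $0$, that is $[1,2^{a_0},0,2^{a_1},\dots]\mapsto[0,2^{a_0},0,2^{a_1},\dots]$, is a bijection from the source onto $\rho(F_{n,0})$; its inverse replaces the first letter by $1$, and both directions stay inside the image of $\rho$ because the brick-leading letters remain weakly decreasing (using the explicit reconstruction of $\rho^{-1}$).

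Next I would pin down the local shape of a source element $w$. The path starts with $(1,1)$ and first descends to a valley at height $1$; write $A$ for the prefix ending at that valley bottom. Since every later valley has height $0$, the part of $w$ just after the valley is a \emph{single} subpath $X$ based at height $1$: it cannot return to height $1$ and rise again, for that would create a second valley of height $1$. Thus $X$ is followed by exactly one down step to height $0$ and then a tail $S$ that lives entirely at height $0$, so $w = A\,X\,0\,S$ and ``the subpath after the first valley'' is unambiguously $X$. The move slides $X$ down one step past that single $0$, producing $w' = A\,0\,X\,S$ and lowering the first valley to height $0$. This is a Dyck path, and I would confirm $w' \in F_{n,0}$ via \cref{intrinsics}: the start $(1,1)$, the heights of the remaining valleys, and the ending of $w'$ are all inherited from $w$, with the boundary case of empty $S$ checked by hand.

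Finally I would check that the slide realizes the word operation. Because $A$ and $S$ are untouched, neither the merged-peak count recorded before the first valley (the exponent of the first brick of $\rho(w)$) nor the data of the later bricks change; only the first valley drops from height $1$ to height $0$. By the definition of $\rho$ this turns the first brick $[1,2^{a_0}]$ into $[0,2^{a_0}]$, i.e. it replaces the first letter of $\rho(w)$ by $0$ and fixes the rest, exactly the bijection of the first paragraph. Injectivity is then immediate from injectivity of $\rho$, and surjectivity follows by running the inverse slide (pushing the first arch of an element of $F_{n,0}$ up from height $0$ to height $1$), which corresponds to replacing the first letter by $1$. The step I expect to be the real work is the structural claim $w = A\,X\,0\,S$, namely that after the first valley there is a single arch and a single descent, together with the boundary verifications through \cref{intrinsics}; once this normal form is secured, the rest is bookkeeping on the words $\rho(w)$.
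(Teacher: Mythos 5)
Your proposal is correct and follows essentially the same route as the paper: both define the map by sliding the pyramid after the first valley down past one $0$ step (and back up for the inverse), verify membership in $F_n$ via \cref{intrinsics} including the boundary case at the end of the path, and track the effect on $\rho$ brick by brick. Your normal form $w = A\,X\,0\,S$ and the explicit word-level bookkeeping just make explicit the checks the paper leaves to the reader.
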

\begin{proof}
  Let us consider an element $w$ of $F_{n,1}$ with one valley of
  height $1$ and all other valleys have height $0$. There is a unique
  element $w'$ in $F_{n+1,0}$ that is covered by $w$, which is obtained by
  sliding down the subpath after the first valley in $w$. One can
  check that $\rho(w')$ is obtained by replacing the first letter of
  $\rho(w)$ by $0$. 

  Conversely, let $w'$ in $F_{n,0 }$. Then $w'$ has only valleys of
  height zero. Let $w$ be obtained by sliding up the subpath after the
  first valley of $w'$, by just one step in order to create a valley
  of height $1$. Then one can check that $w'$ is still in $F_n$ using
  \cref{intrinsics}, because the height of the rightmost peak is
  always at most $2$.

  Moreover $w$ has exactly one valley of height $1$. These two constructions
  are clearly inverses of each other.
\end{proof}

\medskip

The Hasse diagram of the interval $F_n$ looks like the graph of
vertices and edges of some polytope. It turns out to be related to a
familly of cell complexes due to Saneblidze. Namely, one can identify
its image by $\rho$ as the set of vertices defined by
Saneblidze in \cite{san_arxiv}.

\begin{theorem}
  \label{th_san}
  The interval $F_n$ is mapped by $\rho$ to the set of coordinates of
  the Hochschild polytope of Saneblidze.
\end{theorem}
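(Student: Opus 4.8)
The statement $\ref{th_san}$ asks me to identify $\rho(F_n)$ with the vertex set of Saneblidze's Hochschild polytope. Since the map $\rho$ was already shown (via the explicit reverse construction in the bricks discussion) to be injective, the content is purely to match the combinatorial description of the image $\rho(F_n)$ against Saneblidze's own coordinates. So the plan is to first pin down an intrinsic description of $\rho(F_n)$ as a set of words in $\{0,1,2\}^n$, and then quote Saneblidze's definition and check that the two descriptions coincide term for term.

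**Describing the image $\rho(F_n)$ intrinsically.**

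First I would translate the intrinsic characterization of $F_n$ from \cref{intrinsics} through the map $\rho$. Recall that $F_n$ consists of Dyck paths starting with $(1,1)$, with valleys only at heights $0$ or $1$, heights weakly decreasing left to right (\cref{ht_decroit}), ending in $(0,1,0)$ or $(0,1,0,0)$. Under $\rho$, each valley of height $h \in \{0,1\}$ contributes a letter $h$, and each internal double-$1$ (peak-raising) contributes a letter $2$; a word $z \in \{0,1,2\}^n$ of length $n$ results. The decreasing-height condition becomes the constraint that, reading $z$ from left to right and ignoring the $2$'s, the subsequence of $0$'s and $1$'s is weakly decreasing --- equivalently, once a $0$ appears (as a non-$2$ letter) no later non-$2$ letter may be $1$. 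The boundary conditions translate using the computations already recorded: the minimal element maps to $[0,\dots,0]$, the maximal to $[1,2,\dots,2]$, and $w_{n,b}$ to $[1,\dots,1]$. So I would state and prove a clean lemma: $\rho(F_n)$ is exactly the set of words $z \in \{0,1,2\}^n$ such that the non-$2$ letters of $z$ form a weakly decreasing sequence (reading left to right, in the order $1 > 0$), subject to whatever first-letter/last-letter normalization the boundary conditions impose. The recursive decompositions in \cref{F1_F12}, \cref{F1_F10}, \cref{Fb_F11} already describe how appending $0$, $1$, or $2$ to $\rho(w)$ behaves, and these give an inductive handle confirming that every admissible word is realized.

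**Matching against Saneblidze's coordinates.**

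Next I would recall from \cite{san_arxiv} the explicit coordinate set that Saneblidze assigns to the vertices of his Hochschild polytope, and rewrite it in the same alphabet $\{0,1,2\}^n$. The core of the proof is then a bijective dictionary: I would exhibit the correspondence between the two indexing conventions (Saneblidze's vertices are typically indexed by certain sequences or by subdivisions of a simplex) and verify that the admissibility condition defining his vertex set --- the monotonicity/adjacency constraint on coordinates --- is literally the weakly-decreasing-on-non-$2$-letters condition derived above. Counting provides a useful sanity check throughout: \cref{booleen_top} and \cref{booleen_bot} already show two boolean sublattices of size $2^{n-1}$, and the recursive bijections let me compute $|F_n|$ and compare it with the known vertex count of the $n$-th Hochschild polytope.

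**Expected main obstacle.**

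The genuine difficulty is not in our side of the correspondence --- our combinatorics is fully under control from the earlier lemmas --- but in faithfully transcribing Saneblidze's definition, which lives in an algebraic-topology framework with its own notation for the cell complex realizing the Hochschild polytope. The hard part will be establishing the precise dictionary between his coordinate conventions and our words $\rho(w)$, so that his defining incidence/monotonicity relations map exactly onto our ``decreasing non-$2$ letters'' condition. Once that translation is fixed, the verification is a routine matching of two finite combinatorial descriptions, and the injectivity of $\rho$ (already established) upgrades the set-level bijection to the claimed identification.
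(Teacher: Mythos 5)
Your overall strategy is viable and genuinely different in organization from the paper's: you propose to compute a closed-form (intrinsic) characterization of $\rho(F_n)$ and match it against a closed-form description of Saneblidze's coordinates, whereas the paper extracts from \cite{san_arxiv} a \emph{recursive} description of the vertex sets $Z_n = Z_{n,0}\sqcup Z_{n,1}$ (with an auxiliary subset $Z_{n,b}$) and then proves $\rho(F_n)=Z_n$ by induction on $n$, matching the three clauses of the recursion respectively with \cref{F1_F10}, \cref{F1_F12}, \cref{Fb_F11} (for $Z_{n,1}$), with \cref{booleen_top} (for $Z_{n,b}$), and with \cref{Fx_F0} (for $Z_{n,0}$). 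Your intrinsic description is essentially correct once made precise: the image is exactly the set of words in $\{0,1,2\}^n$ whose first letter is not $2$ and whose subsequence of non-$2$ letters is weakly decreasing (there is no further last-letter condition; the first-letter condition is automatic since the $2$'s accumulated before a valley are appended \emph{after} its height). Your phrase ``subject to whatever first-letter/last-letter normalization the boundary conditions impose'' leaves this unresolved, and you would also need to verify surjectivity, i.e.\ that the reverse brick construction applied to \emph{every} such admissible word produces a path satisfying all the conditions of \cref{intrinsics}; this is true but is not checked in your sketch.

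The genuine gap is on the Saneblidze side. The entire content of the theorem is the identification with a set defined in an external reference, and your proposal never writes down what that set is: you say you ``would recall'' the coordinate set and that, once the dictionary is fixed, ``the verification is a routine matching,'' but the dictionary is precisely the step that is deferred. A proof must commit to a concrete description of Saneblidze's vertices (the paper does this via the recursion for $Z_n$, ``extracted carefully from this reference'') and then actually perform the comparison. As written, your argument establishes (modulo the small points above) only one half of the equality, namely a description of $\rho(F_n)$; the other half is asserted to be routine but is not carried out, so the theorem is not yet proved.
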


\medskip

Before entering the proof, let us start by giving a recursive
description of these sets $Z_n$ inside $\{0,1,2\}^n$, extracted
carefully from this reference and reformulated in simpler terms, as
the disjoint union of subsets $Z_n = Z_{n,0} \sqcup Z_{n,1}$. The
recursive description also involves a subset
$Z_{n,b} \subseteq Z_{n,1}$.

For $n=1$, this is given by $Z_{n,0} = \{ [0] \}$ and $Z_{n,1} = Z_{n,b} = \{ [1] \}$.

For $n \geq 1$, the description is given by:
\begin{enumerate}[label=(\roman*)]
\item $Z_{n+1,1}$ is made of all elements $[z,0]$ and $[z,2]$ for
  $z \in Z_{n,1}$ and all elements $[z,1]$ for $z \in Z_{n,b}$,
\item $Z_{n+1,b}$ is made of all elements
  $[z,1]$ and $[z,2]$ for $z \in Z_{n,b}$.
\item The subset $Z_{n+1,0}$ is made by replacing the initial letter by
$0$ in all elements of $Z_{n+1,1}$ in which the letter $1$ only appears
as the first letter.
\end{enumerate}

By induction, all elements of $Z_{n,0}$ (resp. $Z_{n,1}$ and
$Z_{n,b}$) start with $0$ (resp. $1$). Note also that the elements of
$Z_{n,b}$ only contains the letters $1$ and $2$.

\begin{proposition}
  The image of $F_n$ by $\rho$ is equal to $Z_n$. Moreover $F_{n,b}$
  is mapped to $Z_{n,b}$, $F_{n,0}$ to $Z_{n,0}$ and $F_{n,1}$ to
  $Z_{n,1}$.
\end{proposition}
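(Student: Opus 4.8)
The plan is to prove the equality $\rho(F_n)=Z_n$ (together with the compatibility of the three pieces) by induction on $n$, matching the recursive description of $Z_n$ against the combinatorial structure of $F_n$ already established. The base case $n=1$ is immediate: $F_1$ has exactly two elements (see the figure), whose images under $\rho$ are $[0]$ and $[1]$, which is precisely $Z_{1,0}=\{[0]\}$ and $Z_{1,1}=Z_{1,b}=\{[1]\}$, with the block-indecomposable element $w_{1,b}$ giving $[1]\in Z_{1,b}$.

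For the inductive step, I would show that $\rho(F_{n+1})$ satisfies exactly the three recursive clauses (i)--(iii) defining $Z_{n+1}$, using the bijections set up in the lemmas above. The key observation is that $F_{n+1,1}$ partitions according to the last letter of the image by $\rho$ into $F_{n+1,1,0}$, $F_{n+1,1,1}$, $F_{n+1,1,2}$. By \cref{F1_F10}, $\rho(F_{n+1,1,0})$ consists of all words $[z,0]$ with $z\in\rho(F_{n,1})=Z_{n,1}$; by \cref{F1_F12}, $\rho(F_{n+1,1,2})$ consists of all $[z,2]$ with $z\in Z_{n,1}$; and by \cref{Fb_F11}, $\rho(F_{n+1,1,1})$ consists of all $[z,1]$ with $z\in\rho(F_{n,b})=Z_{n,b}$. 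Combining these three gives exactly clause (i). For clause (ii), the block-indecomposable elements $F_{n+1,b}$ are those whose valleys all have height $1$ (\cref{two_peaks}), so their images use only the letters $1$ and $2$; invoking the two inclusions of $F_{n,b}$ into $F_{n+1,b}$ from \cref{booleen_top} and \cref{split_Fb}, one of which appends $1$ (the bottom inclusion, by the proof of \cref{Fb_F11}) and the other $2$, yields precisely the elements $[z,1]$ and $[z,2]$ for $z\in Z_{n,b}$. Finally, clause (iii) is handled by \cref{Fx_F0}, which identifies $F_{n+1,0}$ with the subset of $F_{n+1,1}$ in which only the first valley has height $1$ — equivalently, on the $\rho$-side, the words in $Z_{n+1,1}$ where $1$ occurs only as the first letter, with that first letter replaced by $0$.

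The main obstacle will be clause (iii), and more precisely verifying that \cref{Fx_F0} interacts correctly with the recursion for $Z_{n+1,0}$. One must confirm that ``only the first valley has height $1$'' translates exactly into ``the letter $1$ appears only as the first letter of $\rho(w)$,'' and that the set of such words is stable under the $n\mapsto n+1$ passage in the way clause (iii) demands. This requires unwinding the definition of $\rho$ carefully: a valley of height $1$ contributes a letter $1$, and by \cref{ht_decroit} the heights of valleys decrease from left to right, so a height-$1$ first valley forces all subsequent $0$-letters in $\rho(w)$ to correspond to height-$0$ valleys — hence no internal $1$. The subtlety is purely bookkeeping, but it is the place where one must be most attentive to the off-by-one shift between $F_n$ and $F_{n+1}$, and to the precise statement of \cref{Fx_F0} (whose hypothesis concerns $F_{n,1}$ but whose conclusion lands in $F_{n,0}$, so the indices must be tracked at the level $n+1$).

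Once these three clauses are matched, the disjointness $Z_{n+1}=Z_{n+1,0}\sqcup Z_{n+1,1}$ is automatic from the partition $F_{n+1,0}\sqcup F_{n+1,1}$ by the height of the first valley, and injectivity of $\rho$ (already established by the explicit reconstruction of $w$ from $\rho(w)$ via bricks) guarantees that counting and set-theoretic equality coincide, completing the induction.
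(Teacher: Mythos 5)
Your proposal is correct and follows essentially the same route as the paper: induction on $n$, splitting $F_{n}$ into $F_{n,0}\sqcup F_{n,1}$, matching clause (i) via the three bijections of \cref{F1_F10}, \cref{F1_F12} and \cref{Fb_F11}, clause (ii) via \cref{booleen_top} and \cref{split_Fb}, and clause (iii) via \cref{Fx_F0}. Your extra care in checking (via \cref{ht_decroit}) that ``only the first valley has height $1$'' corresponds exactly to ``$1$ occurs only as the first letter of $\rho(w)$'' is a detail the paper leaves implicit, and it is a correct and welcome addition.
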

\begin{proof}
  By induction on $n \geq 1$. The statement holds by inspection for $n=1$.

  Let us assume that the statement holds up to $n - 1$. We then need to
  perform a decomposition of $F_{n}$ that is parallel to the
  recursive definition of $Z_{n}$.

  First, $F_{n}$ is the disjoint union of $F_{n,0}$ and
  $F_{n,1}$. Similarly, $Z_{n}$ is the disjoint union of $Z_{n,0}$
  and $Z_{n,1}$, according to the first letter. It is therefore
  enough to work separately on each part, starting with $F_{n,1}$.

  Using the decomposition of $F_{n,1}$ into three parts according to
  the last letter of the image by $\rho$, and the bijections stated in
  \cref{F1_F10}, \cref{F1_F12} and \cref{Fb_F11}, one obtains that
  $\rho(F_{n,1})$ has the defining property (i) of $Z_{n,1}$.

  In order to check that $\rho(F_{n,b})$ has the defining property
  (ii) of $Z_{n,b}$, one can use \cref{booleen_top} and the remark
  following it to check explicitly that elements of $\rho(F_{n+1,b})$ are obtained
  by adding either $1$ or $2$ at the end of elements of
  $\rho(F_{n,b})$.
  
  There remains to check that $\rho(F_{n,0})$ has the defining
  property (iii) of $Z_{n,0}$. This is exactly provided by \cref{Fx_F0}.
\end{proof}

\medskip

Words on the alphabet $\{0,1,2\}$ in $Z_n$ are considered as vectors
in $\ZZ^n$. The partial order on $Z_n$ introduced by Saneblidze is
termwise-comparison: two words $z$ and $z'$ are comparable if and only
if $z_i \leq z'_i$ for all $1 \leq i \leq n$.

One certainly expects that the map $\rho$ should give an isomorphism
from the partial order on $F_n$ to this partial order on $Z_n$. One
direction of the proof is just \cref{cover_increase}. Completing the
proof would require a precise study of the covering relations in
both partial orders. Because this does not seem to be central enough
in the present article, this is left for another work.

% Description of cover relations ?? from Saneblidze ??

% TODO !!

% for edges: getting a 2 $\leftrightarrow$ full sliding up ?

% Two types of sliding moves in $F_n$ : 

% (A) either creating a valley at height 1

% (B) full sliding up ?

% corresponding to

% (A) in the code, a letter 0 becomes 1 at the same position

% (B) in the code, (0) or (1) gets replaced by (2) at the same position

% \medskip

As a corollary of what precedes, one can count the elements of
$F_n$ as follows. From \cref{booleen_bot}, \cref{booleen_top},
\cref{F1_F10}, \cref{F1_F12} and \cref{Fb_F11}, one deduces the
following relations
\begin{align*}
  \# F_{n,0} &= 2^{n-1},\\
  \# F_{n,b} &= 2^{n-1},\\
  \# F_{n,1} &= 2 \# F_{n-1,1} + \# F_{n-1,b}.
\end{align*}
which imply the following statement.

\begin{proposition}
  The number of elements in $F_n$ is the sequence \oeis{A045623}:
  \begin{equation}
    2^{n-2} (n+3) = 2,5,12,28,64,144,\dots
  \end{equation}
  for $n \geq 1$.
\end{proposition}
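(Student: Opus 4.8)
The plan is to reduce the statement to a single first-order linear recurrence and solve it. Since $F_n$ is the disjoint union $F_{n,0} \sqcup F_{n,1}$, I would first record that
\begin{equation*}
  \# F_n = \# F_{n,0} + \# F_{n,1} = 2^{n-1} + \# F_{n,1},
\end{equation*}
using the first of the three displayed relations. Everything therefore comes down to determining $\# F_{n,1}$.

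Next, substituting the second relation (in the form $\# F_{n-1,b} = 2^{n-2}$) into the third relation yields the recurrence
\begin{equation*}
  \# F_{n,1} = 2 \# F_{n-1,1} + 2^{n-2} \qquad (n \geq 2).
\end{equation*}
For the base case, the interval $F_1$ has two elements, of which $\# F_{1,0} = 2^0 = 1$ lie in $F_{1,0}$, so $\# F_{1,1} = 1$. To solve the recurrence I would divide through by $2^n$, obtaining $\# F_{n,1}/2^n = \# F_{n-1,1}/2^{n-1} + \tfrac14$, which telescopes to $\# F_{n,1}/2^n = \tfrac12 + (n-1)/4 = (n+1)/4$, hence $\# F_{n,1} = 2^{n-2}(n+1)$. (Equivalently, one checks this closed form by induction: it holds at $n=1$, and $2 \cdot 2^{n-3}n + 2^{n-2} = 2^{n-2}(n+1)$ gives the inductive step.)

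Combining the two computations then gives
\begin{equation*}
  \# F_n = 2^{n-1} + 2^{n-2}(n+1) = 2^{n-2}(2 + n + 1) = 2^{n-2}(n+3),
\end{equation*}
which is the sequence \oeis{A045623}. There is no genuine obstacle here; the only points requiring care are the verification of the base case and the index bookkeeping, since the three cardinality relations are stated for $n \geq 1$ while the recurrence coupling $F_{n,1}$ to $F_{n-1,1}$ and $F_{n-1,b}$ is only meaningful for $n \geq 2$.
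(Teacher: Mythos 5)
Your proposal is correct and follows the paper's own route: the paper derives exactly the same three cardinality relations from Lemmas \ref{booleen_bot}, \ref{booleen_top}, \ref{F1_F10}, \ref{F1_F12} and \ref{Fb_F11} and then simply asserts that they imply the formula, leaving the recurrence-solving implicit. You have merely filled in that elementary algebra (base case $\# F_{1,1}=1$, closed form $\# F_{n,1}=2^{n-2}(n+1)$, and the sum with $\# F_{n,0}=2^{n-1}$), and your computations check out.
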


% This can be refined as follows:
% \begin{align*}
%   h_{n,0} &= b (1 + r)^{n-1},\\
%   h_{n,b} &= (1 + r)^{n-1},\\
%   h_{n,1} &= (1 + r) h_{n-1,1} + b h_{n-1,b}.
% \end{align*}
% By induction, this implies that
% \begin{equation}
%   h_{n,b} = (1+r)^{n-2} (1 + n b + r + r b)
% \end{equation}
% for the enumerating polynomials according to color of outgoing edges.

% Proof : TODO

% \begin{proposition}
%   The $h$-polynomial of $F_n$ is $(1+t)^{n-2}(1 + (n + 1) t + t^2)$.
% \end{proposition}

\begin{remark}
  Computing the Coxeter polynomials of the first few lattices in this
  familly, one observes that they have all their roots on the unit
  circle. This have been checked by computer up to the lattice with
  $3328$ elements. For example, for the poset $F_5$ of size $64$, the result
  is
  $\Phi_{1}^{2} \Phi_{2}^{4} \Phi_{6}^{4} \Phi_{7}
  \Phi_{23}^{2}$. One can see some very regular patterns in these
  Coxeter polynomials, when expressed as products of
  $[d]_x = (x^d-1)/(x-1)$ factors. This is a little further evidence
  that this roots-on-the-circle phenomenon could go on for larger
  cases.
\end{remark}

% \subsection{For principal upper ideals}

% GOAL : understand derived equivalences
% of principal upper ideals.

% Nice regularity of the $f$-vector behaviour when adding $(1,0)$ at the beginning:

% Consider a set of Dyck paths $\{v_1,\dots,v_k\}$, then build from this
% set a block-indecomposable Dyck path $w = \level(v_1,\dots,v_k)$. Then
% the $f$-vector of the particle-interval $J((1,0)^n + w)$ is a linear
% function of $n$. The coefficient of $n$ in this linear (AFFINE?) function is the
% product of $(1+t)$ with the product of the $f$-vectors of $J(v_i)$ for
% $i=1,\dots,k$.

\section{Miscellany}

\subsection{A symmetry of colored $h$-polynomials}

\label{hpoly}

One can color the edges of the Hasse diagram of $\Dyck_n$ with two
colors as follows. When an edge corresponds to sliding a subpath to
its highest possible position, this edge is colored red. Other
edges are colored blue. For example, see \cref{fig_dyck4} where blue
edges are also dashed.

As we will see, this coloring is interesting because the generating
polynomial of incoming edges according to their colors has an
unexpected symmetry.

As a warm-up, let us start with the simpler generating series of incoming edges, not taking colors into account:
\begin{equation}
  A = \sum_{n \geq 0} \sum_{w \in \Dyck_n} x^{C(w)} t^n,
\end{equation}
where $C(w)$ is the number of elements covered by $w$.

Using the unique decomposition of Dyck paths into a list of blocks and \cref{facto_tous}, one gets
\begin{equation}
  \label{eq_h_A}
  A = \frac{1}{1-B},
\end{equation}
where $B$ is the similar sum restricted to block-indecomposable Dyck paths.

Then using the level-decomposition of block-indecomposables, one gets
\begin{equation}
  \label{eq_h_B}
  B = \frac{t}{1-x t A}.
\end{equation}
Indeed, the elements covered by $w = \level(w_1,\dots,w_k)$ either
come from replacing $w_i$ by some element that it covers, or from
sliding down a subpath of $w$ that ends somewhere in the final sequence of
letters $0$ in $w$. There are exactly $k$ such additional covered elements.

Together \eqref{eq_h_A} and \eqref{eq_h_B} imply an equation for $A$
which is exactly the well-known equation for the generating series of
Narayana polynomials. Note that the Narayana polynomials are the
$h$-vectors of the associahedra, namely the results of the same
counting of incoming edges for the Hasse diagram of the Tamari
lattices. This hints at a possible cellular structure of the Hasse
diagram of $\Dyck_n$, with the same $f$-vector as the
associahedron. This is left for a future study.

Let us now introduce a refined colored version of $A$:
\begin{equation}
  A = \sum_{n \geq 0} \sum_{w \in \Dyck_n} r^{C_r(w)} b^{C_b(w)} t^n,
\end{equation}
and the associated series $B$ restricted to
block-indecomposables. Here $C_r(w)$ and $C_b(w)$ count the red and blue
incoming edges at $w$.

The first equation \eqref{eq_h_A} holds unchanged, whereas \eqref{eq_h_B} must be slightly modified into
\begin{equation}
  B = \frac{t}{1-t(r+b(A-1))}.
\end{equation}
Indeed, the $k$ elements covered by $w = \level(w_1,\dots,w_k)$ that do
not come from an element covered by some $w_i$ can be either red or
blue. They are red exactly when $w_i$ is the empty Dyck path.

By elimination of $B$, one finds that $A$ satisfies the quadratic equation
\begin{equation}
  A^{2} t b + A t r - 2 A t b + A t -  t r + t b -  A + 1 = 0,
\end{equation}
from which one can deduce the global symmetry
\begin{equation}
  A - 1 = (A(1/r,b/r^2,r t) - 1)/r.
\end{equation}

This symmetry property can be stated as a simple
symmetry of the coefficient $A_n$ of $t^n$ in $A$:
\begin{equation}
  \forall n \geq 1 \quad A_n(r, r b) = r^{n-1} A_n(1/r, b/r).  %ok
\end{equation}
The real meaning of this last symmetry is not clear for the moment. It
extends the usual symmetry of Narayana numbers.

\subsection{$m$-analogues}

One can easily define the same kind of variation for the $m$-Tamari
lattices \cite{bpr, mbmpr} instead of the Tamari lattices, using their similar
description by sliding subpaths in Dyck paths of slope $m$. These
posets do not seem to be very interesting, at least because their
numbers of intervals involve large prime numbers. For example, the
first few numbers of intervals for $m=2$ are given by
\begin{equation}
1, 1, 5, 36, 311, 3001, 31203.
\end{equation}

\subsection{Zeta polynomials and chains}

Let us now give a few simple experimental observations related to
chains and zeta polynomials in $\Dyck_n$. We have not tried to prove them.

The length of the longest chain in $\Dyck_n$ seems to be \oeis{A33638}, realized
between $\wmin$ and Dyck paths that ends with as many final
repetitions of $(1,0,0)$ as possible.

The first few values at $-1$ of the zeta polynomials of $\Dyck_n$ for
$n \geq 1$ are
\begin{equation}
  1, -1, 2, -5, 14, -42, 132, -429.
\end{equation}
One could guess that these should be (up to sign) the Catalan numbers.

The first few values at $-2$ of the zeta polynomials of $\Dyck_n$ for
$n \geq 1$ are
\begin{equation}
  1, -2, 7, -29, 131, -625, 3099, -15818.
\end{equation}
This coincides (up to sign) with the beginning of sequence
\oeis{A007852} that is counting antichains in rooted plane trees on
$n$ nodes.

% \begin{remark}
%   Among the intervals $J(w)$, one can find a sequence of partial
%   orders of size $1,2,3,7,12,30,55,\dots$ whose cardinalities are
%   related to the numbers of ternary trees. They do not seem to be
%   directly related to any known partial order on ternary trees.
% du type 1* (001)* 00 pour la suite 2,7,30,etc
% \end{remark}

\section{About the Tamari lattices}

\label{tamari}

Let us recall that the Tamari lattice \cite{tamari_friedman} of size
$n$ can be defined on the set of Dyck paths of size $n$ as the partial
order induced by transitive closure of some cover relations, namely
the exchange in any Dyck path of a letter $0$ (assumed to be followed
by $1$) with the subpath following it. This cover move is equivalent
to sliding this subpath by one step in the north-west direction. This
description is related in \cite[\S 2]{berbon} to the more classical
description using rotation on binary trees.

For a Dyck path $w \in \Dyck_n$, let the \defi{height sequence} of $w$
be $(h_1,h_2,\dots,h_n)$ where $h_i$ is the height in $w$ just after
the $i^{th}$ letter $1$. Note that repeated sliding to the north-west
of a given subpath $x$ always increase the same subsequence of the
height sequence.

\begin{lemma}
  \label{tamari_chain}
  Let $w$ be a Dyck path. Let $w'$ be obtained from $w$ by sliding
  once or several times the same subpath $x$ in the north-west
  direction. Then the interval $[w,w']$ in the Tamari lattice is a
  chain, in other words a total order, and all elements of $[w,w']$ are
  obtained from $w$ by sliding the subpath $x$.
\end{lemma}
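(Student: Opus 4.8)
The plan is to combine two monotonicity properties of the Tamari order with a local analysis of cover relations, organised around an induction on the number $j$ of elementary slides carried $w$ to $w'$.

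First I set up coordinates. A Dyck path is determined by its height sequence $(h_1,\dots,h_n)$, and a single Tamari cover (sliding one subpath one step to the north-west) only raises heights; hence $u\leq_T u'$ forces $h^u_i\le h^{u'}_i$ for all $i$. Write $w'=w^{(j)}$, where $w^{(m)}$ is the result of sliding $x$ exactly $m$ times, and let $I$ be the set of indices of the up-steps of $x$. By the remark on height sequences recalled above, passing from $w^{(m)}$ to $w^{(m+1)}$ adds $1$ to $h_i$ for each $i\in I$ and fixes the other coordinates. Therefore any $u$ in the Tamari interval $[w,w']$ satisfies $h^u_i=h_i$ for $i\notin I$ and $h_i\le h^u_i\le h_i+j$ for $i\in I$; in particular every Tamari cover $v$ of $w$ lying below $w'$ is obtained by sliding a subpath $y$ all of whose up-steps lie in $I$, so $y$ is either $x$ itself or a subpath strictly interior to $x$ (beginning at an interior valley of $x$).

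The key step is that $w^{(1)}$ is the \emph{unique} cover of $w$ inside $[w,w']$. Here I use the classical fact that the bracket vector $(s_1,\dots,s_n)$, where $s_k$ counts the up-steps enclosed by the arch of the $k$-th up-step, is monotone for the Tamari order: a cover absorbs the slided subpath into a single arch, increasing exactly one coordinate and leaving the rest fixed, whence $u\leq_T u'$ implies $s^u\le s^{u'}$ coordinatewise. Now suppose $v$ is obtained by sliding an interior subpath $y$ of $x$ starting at an interior valley of height $g$ (so $g\ge H+1$, where $H$ is the base height of $x$). The down-step entering that valley closes the arch of an up-step $U_k$ with $h_k=g+1\ge H+2$; such $U_k$ lies strictly inside $x$, so $k\in I$. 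In $w$ the root of $y$ sits at height $g$, just outside the arch of $U_k$ (which closes at height $g$), whereas in $v$ it has been lifted to height $g+1$, hence into that arch, so $s_k(v)>s_k(w)$. On the other hand $w'$ is a \emph{rigid} upward translate of $x$, which preserves every arch internal to $x$; thus $s_k(w')=s_k(w)$ for all $k\in I$, and in particular $s_k(w')=s_k(w)<s_k(v)$. Monotonicity of the bracket vector then forbids $v\leq_T w'$, so $w^{(1)}$ is the only cover of $w$ below $w'$.

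Finally I conclude by induction on $j$; the case $j=0$ is trivial. For $j\ge1$, the interval $[w^{(1)},w']$ is of the same kind—sliding $x$ (now preceded by one fewer $0$) a total of $j-1$ times—so by induction it is the chain $w^{(1)}\lessdot\cdots\lessdot w^{(j)}$. Since $w^{(1)}$ is the unique atom of $[w,w']$, every element other than $w$ lies above $w^{(1)}$, hence in $[w^{(1)},w']$; therefore $[w,w']=\{w^{(0)},\dots,w^{(j)}\}$ is a chain whose elements are exactly the iterated slides of $x$. The main obstacle is the bracket-vector bookkeeping in the third paragraph: one must locate the arch that absorbs the interior slide $y$, which is in general not the up-step adjacent to the valley but a suitable ancestor up-step still lying inside $x$, and check both that this coordinate strictly increases under the interior slide and that it is left fixed by the rigid translation giving $w'$.
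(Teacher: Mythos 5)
Your proof is correct, but its key step takes a genuinely different route from the paper's. Both arguments open identically: monotonicity of the height sequence confines any cover performed inside $[w,w']$ to a slide of $x$ itself or of a subpath $y$ beginning at an interior valley of $x$. The paper then disposes of the interior slides by a commutation argument: slides of $x$ commute with slides of such $y$, so an arbitrary chain of covers from $w$ to $w'$ can be reordered to perform all slides of $x$ first, after which $w'$ has already been attained, forcing the chain to contain no interior slides at all. You instead bring in a second monotone statistic, the bracket vector, and show that an interior slide strictly increases a coordinate $s_k$ indexed by an up-step of $x$, while the rigid translation producing $w'$ fixes every such coordinate; hence interior slides leave the interval, $w^{(1)}$ is the unique atom, and an induction on the number of slides finishes. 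Your route costs you the monotonicity of the bracket vector (classical, and anyway immediate from the cover analysis you supply), but it buys a sharper local statement --- uniqueness of the atom at every stage --- and avoids the chain-reordering step, whose conclusion in the paper (``after the initial slidings of $x$ the top element has been reached already'') is the tersest part of the published argument. The delicate points you flag at the end all check out: the down-step preceding the interior valley is matched to an up-step whose arch cannot escape $x$ (it would have to dip to the base height of $x$), that coordinate increases by the number of up-steps of $y$ under the interior slide, and it is left fixed by the rigid translate defining $w'$.
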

\begin{proof}
  Only two kinds of Tamari cover moves can happen: either the subpath
  $x$ itself is slided, or another subpath $y$ is slided.

  Assume first that the latter happens, where $y$ is not contained in $x$. In
  that case, at least one element $h_i$ of the height sequence get
  increased, that is not in the subsequence modified when sliding $x$.
  Because the height sequence can never decrease, this element $h_i$ is
  still larger in $w'$ than it was in $w$, which is absurd.

  One can therefore assume that $y$ is contained in $x$. But sliding
  $x$ commutes with sliding such $y$.

  Let us pick an arbitrary chain of Tamari cover moves from $w$ to
  $w'$. By the commuting relation just explained, one can assume that
  all slidings of $x$ happen first in this chain.

  But then after performing these initial slidings of $x$, the first
  step of $x$ must have attained the position it will have in $w'$. So
  in fact at this point, the top element $w'$ has been reached
  already. Therefore our original chain only contains slidings of $x$.

  So the full interval $[w,w']$ is just made of a sequence of slidings of $x$.
\end{proof}

\bibliographystyle{alpha}
\bibliography{dextres}

\end{document}